\newcommand{\inpro}[2]{\left\langle{#1},{#2}\right\rangle}
\newtheorem{theorem}{{\bf Theorem}}[section]
\theoremstyle{definition} \newtheorem{definition}[theorem]{\bf Definition}
\theoremstyle{plain} \newtheorem{lemma}[theorem]{Lemma}
\newtheorem{remark}{Remark}[section]
\newcounter{smalllist}
\newcommand{\lf}{\left}
\newcommand{\rt}{\right}
\newcommand{\be}{\begin{eqnarray*}}
	\newcommand{\en}{\end{eqnarray*}}
\newcommand{\bes}{\begin{eqnarray}}
\newcommand{\ens}{\end{eqnarray}}
\newcommand{\ep}{\epsilon}
\def\nn{\nonumber}
\def\leqslant {\le}
\def\bq{\begin{equation}}
\def\eq{\end{equation}}
\def\bqq{\begin{eqnarray*}}
	\def\eqq{\end{eqnarray*}}
\title[Approximate  solutions for backward problem]{ Approximate  solutions of  inverse problems for nonlinear space fractional diffusion equations with   randomly perturbed data   }
\author[E. Nane]{Erkan Nane}
\address[E. Nane]{Department of Mathematics and Statistics, Auburn University, Auburn, USA}%
\email{ezn0001@auburn.edu}
\author[N.H. Tuan]{Nguyen Huy Tuan}
\address[N.H. Tuan]{ Department of Mathematics and Computer Science, University of  Science, Vietnam National University HCMC,  Ho Chi Minh City, Viet Nam}
\email{thnguyen2683@gmail.com}
\begin{document}
	
	\begin{abstract}
	This paper is concerned with backward problem for nonlinear space fractional diffusion  with additive  noise on the right-hand side and the final value.
	To regularize the instable solution, we  develop some new regularized method for solving the problem. In the case of constant coefficients, we use the truncation methods. In the case of perturbed time dependent coefficients, we apply a new quasi-reversibility method.  We also show the convergence rate  between the regularized solution and the sought solution under some a priori assumption on the sought solution.
		
	\end{abstract}

	\maketitle
\noindent{\it Keywords:}
Inverse problem for fractional heat equation, truncation method, approximate solutions, randomly perturbed source, randomly perturbed final value.
	\tableofcontents

\section{Introduction}

 In this paper we focus on the problem  of finding  the initial  function $u(x,0)=u_0(x)$ such that  $u(x,t),\  t>0$ satisfies the following  final value problem for the nonlinear equation with fractional Laplacian
\begin{equation}
\label{problem2}
\left\{\begin{array}{l l l}
u_t + a(t)  (-\Delta )^\beta u & = F(u)+g(x,t), & \qquad (x,t) \in \Omega\times (0,T),\\
u_x(x,t)&=0, & \qquad x \in \partial {\Omega},\\
u(x,T) & = u_T(x), & \qquad x \in {\Omega},
\end{array}
\right.
\end{equation}
where {$\beta >\frac{1}{2}$
 is a given constant,  see equation  \eqref{beta-condition} for this restriction on $\beta$}. The domain $\Omega=(0,\pi)$ is 1-D domain.  The function $F$ and $g$ are called the source functions which will be defined later. $a(t)$ is a given time dependent coefficient.   The function $u_T$ is called the final value data. $(-\Delta )^\beta$ is the fractional Laplacian that will be explained in Section 2. The space and time  fractional diffusion  has been studied recently in \cite{Nane}. In this paper, we only consider the problem  with fractional order of space variable  defined by spectral theory. Our fractional Laplacian in this paper differs from the fractional Laplacian defined in \cite{Nane}.

The problem \eqref{problem2} with $\beta =a(t)= 1$  is  the backward problem for classical parabolic equation.
These problems are applied in fields  such as the heat conduction theory \cite{BBC}, material science \cite{RHN}, hydrology \cite{JB,LEP}, groundwater contamination \cite{SK}, digital remove blurred noiseless image \cite{CSH} and also in many other practical applications of mathematical physics and engineering. It is well--known that  the backward parabolic  problem is severely ill--posed (see \cite{r5}). The  solutions do not always exist, and in the case of existence,
the solutions  do not depend continuously on the given initial  data. In fact, from small noise contaminated physical measurements,
the corresponding solutions will  have large errors. { Therefore, some regularized methods are required to find approximate solution. If $\beta=1$, the deterministic case for Problem \eqref{problem2} has been studied by \cite{PTN,Tuan2,Tuan3}.}

The analysis
of regularization methods for the stable solution of Problem \eqref{problem2} depends on the mathematical
model for the noise term on the source function $g$ and  the final value data $u_T$.  We
suppose that the measurements are described by functions
\begin{equation}\label{observed-data}
g^{\text{obs}}= g+ \text{"noise"}, \quad u_T^{\text{obs}}= u_T+ \text{"noise"}.
\end{equation}

 If the noise is considered as a
deterministic quantity, it is natural to study the worst-case error. In the literature a
number of efficient methods for the solution of \eqref{problem2} have been developed: see, for example, {\cite{Bissantz,Koba},} and the references therein.

 If the errors are generated from uncontrollable sources such  as wind, rain, humidity, etc, then the model is random.
 If the noise is
 modeled as a random quantity, the convergence of estimators  $\widetilde u(x,0)$ of  $u(x,0)$ should be studied
 in statistical terms. More details on ill-posedness of the problem \eqref{problem2} in the case of $F=0, \beta=1$ with random noise can be found in \cite{Minh}.  Methods for the deterministic cases cannot apply directly to this case.   Our main purpose in the random noise case  is  finding suitable  estimators  $\widetilde u(x,0)$ of $ u(x,0)$ and consider the expected square error $\mathbb{E} \|\widetilde u(x,0) - u(x,0)\|$, also  called the mean integrated square error (MISE).

There exist a considerable amount of literature on regularization methods for
linear backward problem  with random noise.
In  Cavalier  \cite{Cavalier}, the author gave some theoretical examples about inverse problems with random noise. Mair  and Ruymgaart  \cite{r7} considered theoretical formulae  for statistical inverse estimation in Hilbert spaces
and applied the method to  some examples. Recently, Hohage et al. \cite{Hohage} applied  spectral cut-off (truncation method) and Tikhonov-type methods for solving linear statistical inverse problems including backward heat equation (See p. 2625, \cite{Hohage}). In the linear inhomogenous case of \eqref{problem2}, i.e, $\beta=1$ and $F=0$, the Problem \eqref{problem2} has been recently  studied in \cite{Minh} in two  space dimensions.

To the best of the  authors' knowledge,  the backward problem for nonlinear  parabolic  equation with random noise was not
investigated in the literature. This is  one of the motivations of our present paper.
Next we discuss the difficulty of investigating  the nonlinear problem. A well--known fact is the following:  if  $F(u)=0$ then the problem \eqref{problem2} and \eqref{observed-data} can be transformed into a linear operator with random case
\begin{equation}
u_T=C u_0 + \text{"noise"}.
\end{equation}
Where  {$C$  is a linear bounded operator with an unbounded inverse}.
There are  many well-known methods developed by  Cavalier \cite{Cavalier}, Hohage et al \cite{Hohage}, Siltanen \cite{Lassas}, Trong et al \cite{Trong}  as above,  for solving the latter linear model. However, when $F$ depends on $u$, we can not transform Problem \eqref{problem2} into a  linear one, this makes the nonlinear problem  more difficult to study.
Therefore, we have to develop some new methods to solve the nonlinear problem.\\

In this paper, using a similar random model  given in \cite{Minh}, we consider the nonlinear problem  as follows
 \begin{equation}
 	\widetilde u_T(x_k)=u_T(x_k)+\sigma_k\epsilon_k, \quad  \widetilde g_k(t)= g(x_k,t)+ \vartheta\xi_{k}(t),\quad \text{for} \quad  k=\overline{1,n},  \label{noise}
 \end{equation}
where $x_k = \pi\dfrac{2k-1}{2n}$
  and  $\epsilon_k$ are unknown independent random errors. Moreover,   $\epsilon_{k}\sim \mathcal{N}(0,1)$,  and $\sigma_{k} $ are unknown positive constants which are bounded by a positive constant $V_{max}$, i.e., $0 \leq \sigma_{k} < V_{\text{max}}$ for all $k=1,\cdots, n$.  $\xi_{k}(t)$'s are Brownian motions. The noises $ \epsilon_{k}, \xi_{k}(t)$ are mutually independent. {A similar model with noise in equation  \eqref{noise} without  the $g$ part has been recently considered by Tuan and Nane \cite{Tuan1}}. \\

 Next we give some details about our methods  for  the following two cases

\noindent {\bf The first case: $a(t)=1$}.
First, we transform the problem \eqref{problem2} into a nonlinear integral equation, then we apply the Fourier truncation method(using the eigenfunctions $\cos (px)$, $p=0,1,2\cdots$ of the Laplacian in the interval $(0,\pi)$ with Neumann boundary conditions) associated with some techniques in nonparametric regression to establish a first regularized solution $\overline U_{M_n,n}(x,t)$ which satisfies  \eqref{resol}. To obtain the estimate between  $\overline U_{M_n,n}$ and $u$, we need some stronger assumptions on $u$, such as \eqref{ass1} and \eqref{ass2}.  {\bf The main result is Theorem \ref{main-theorem-1}.} However, as pointed out in Remark \ref{remark2},  the assumptions \eqref{ass1} and \eqref{ass2} are  difficult to come up in practice. Motivated by this, when $g=0$ we develop a second regularized solution  $\widehat	U_{M_n,n}$ defined by \eqref{resol2} to obtain the estimate for
$u \in C([0,T];H^{\gamma}(\Omega))$ (see  Remark \ref{remark2} for more details). {\bf The main result for the second type of regularization is given in Theorem \ref{main-theorem-2}.} It is important to realize that  the second regularized solution is a modification of the first regularized solution. { Our methods in this paper can be applied to solve many ill-posed problems of nonlinear PDEs such as Cauchy problem for nonlinear elliptic, nonlinear ultraparabolic, nonlinear strongly damped wave equations, and many others}.

 {
\noindent {\bf The second case: $a(t)$ depend on $t$ and is perturbed}.
Note that if the coefficient $a$ in the main equation of \eqref{problem2}  is not noisy then the Fourier truncation method in \cite{Tuan2} can be applied to Problem \eqref{problem2}.   However, the difficulty  occurs for \eqref{problem444} when the time dependent coefficient  $a(t)$  is noisy.  Indeed,  we assume that $a(t)$ is noisy by observed random data $\overline a(t)$ which satisfy  that
\begin{equation}
\overline a(t)= a(t)+ \ep \overline  {\xi}(t)  \label{a}
\end{equation}
where $\overline  {\xi}(t)$ is Brownian motion.    If we  have used a Fourier truncation solution for  \eqref{problem2}, then the regularized solution  would contain some terms such as $\exp\big(p^{2\beta}\int_t^T \int_t^s \overline a(\tau) d\tau ds\big)$, which would lead to  some  complex computations. Hence, we don't follow the truncation method as in \cite{Tuan2}, instead we develop a new method to find a regularized solution. We will apply a new quasi-reversibility method for solving the problem. Further details of this method can be found in  Tuan \cite{Tuan2}. In this case our {\bf main results are Theorems \ref{thm-time-dependent-1} and \ref{thm-high-sob-estimate}.}
}

In this paper, we only study the upper bound of the convergence rate.  In a future work, we will study the minimax rate of  convergence for finding the optimal rate.  The problem of finding minimax rate is a very difficult and interesting problem.

\section{ Regularized solutions for backward problem for nonlinear fractional space diffusion}\label{section2}

\subsection{Some Notation}
We first  introduce notation, and then  state the first set of our  main results in this paper.
We define fractional powers of the Neummann-Laplacian.
{
\begin{equation}
Af:= -\Delta f.
\end{equation}
Since $A$ is a linear densely defined self-adjoint and positive definite
elliptic operator on the connected bounded domain  $\Omega $ with
Neumann  boundary condition, the eigenvalues of $A$ satisfy
\[
\lambda_0=0 < \lambda_1 \le \lambda_2 \le \lambda_3 \le \cdots \le \lambda_p\le \cdots
\]
with  $\lambda_p=p^2  \to \infty $ as $p \to \infty$; see \cite{evan}.
The corresponding eigenfunctions are denoted respectively by
$\varphi_{p}(x)=\sqrt{\frac{2}{\pi}}\cos (px) $.
Thus the eigenpairs $(\lambda_p,\phi_p)$,
$p=0,1,2,...$, satisfy
\[
\begin{cases}
A \varphi_{p}(x)
=
-\lambda_p \phi_{p}(x),
\quad & x \in \Omega \\
\partial_x\phi_{p}(x)
=
0,
\quad & x\in \partial \Omega.
\end{cases}
\]
The functions $\varphi_p$ are normalized so that
$\{\phi_{p}\}_{p=0}^\infty$ is an orthonormal basis of $L^2(\Omega)$.\\
Defining
\[
H^{\gamma}(\Omega)
=
\Bigg\{ v \in L^{2}(\Omega) : \sum\limits_{p=0}^{\infty}
\lambda_{p}^{2\gamma} |\inpro{v}{\phi_{p}}|^{2} <  + \infty \Bigg\},
\]
where $\inpro{\cdot}{\cdot}$ is the inner product in $L^{2}(\Omega)$, then
$H^{\gamma}(\Omega)$ is a Hilbert space equipped with norm
\[
\|v\|_{H^{\gamma}(\Omega)}
=
\left(\sum\limits_{p=1}^{\infty}
\lambda_{p}^{2\gamma} |\inpro{v}{\phi_{p}}|^{2}\right)^{1/2}.
\]
Next we define the   {\bf fractional Laplacian} operator using the spectral theory.
\begin{definition} \label{Def1}
Let $f \in L^2(\Omega)$. For each $\beta >0$, we define fractional Laplacian using the spectral theorem as follows
\begin{equation}
A^\beta f:= (-\Delta )^{\beta } f = \sum\limits_{p=1}^{\infty}p^{2\beta }  \Big<f, \phi_p \Big> \phi_p(x),
\end{equation}
where $\phi_p(x)=\sqrt{\frac{2}{\pi}} \cos(px)$. More details on this fractional Laplacian can be found in \cite{Koba}.
\end{definition}
}
In this section we assume that $a(t)=1$ and $\beta>1/2$.

\subsection{The solution of the problem \eqref{problem2}}

\begin{lemma}\label{lemma-solution-rep}
	If the problem \eqref{problem2}
	 has solution $u$ then it is given by
	\begin{equation}  \label{equality1}
	u(x,t)= \sum_{p=0}^\infty \Big[ e^{(T-t)p^{2\beta}}  \Big<u_T, \phi_p \Big> -\int_t^T  e^{(s-t)p^{2\beta}} g_p(s)ds-\int_t^T  e^{(s-t)p^{2\beta}} F_p(u)(s)ds \Big] \phi_p(x)
	\end{equation}
	where  $g_p(t)= \Big<g(\cdot,t), \phi_p \Big>$ and $F_p(u)(t)= \Big<F((u(\cdot,t) )), \phi_p \Big>$
\end{lemma}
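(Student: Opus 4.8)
The plan is to reduce the partial differential equation to a decoupled family of scalar ordinary differential equations by projecting it onto the orthonormal eigenbasis $\{\phi_p\}_{p=0}^\infty$. Writing $u_p(t) = \inpro{u(\cdot,t)}{\phi_p}$ for the Fourier coefficients of a (presumed existing) solution, I would first use that $(-\Delta)^\beta$ acts diagonally on this basis, namely $(-\Delta)^\beta \phi_p = p^{2\beta}\phi_p$ from Definition~\ref{Def1}. Taking the $L^2(\Omega)$ inner product of the main equation (with $a(t)=1$) against $\phi_p$ then yields, for each $p$,
\begin{equation*}
u_p'(t) + p^{2\beta} u_p(t) = F_p(u)(t) + g_p(t),
\end{equation*}
together with the terminal condition $u_p(T) = \inpro{u_T}{\phi_p}$ inherited from $u(\cdot,T)=u_T$. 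Here $F_p(u)(t)$ is treated as a known forcing term, since we only assume that a solution $u$ exists and are deriving its representation.

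Next I would solve this linear first-order ODE with the integrating factor $e^{p^{2\beta}t}$: multiplying through gives $\tfrac{d}{dt}\big(e^{p^{2\beta}t}u_p(t)\big) = e^{p^{2\beta}t}\big(F_p(u)(t)+g_p(t)\big)$, and integrating from $t$ to $T$ while inserting the terminal value produces
\begin{equation*}
u_p(t) = e^{(T-t)p^{2\beta}}\inpro{u_T}{\phi_p} - \int_t^T e^{(s-t)p^{2\beta}}\big(g_p(s)+F_p(u)(s)\big)\,ds.
\end{equation*}
Substituting back into the eigenfunction expansion $u(x,t) = \sum_{p=0}^\infty u_p(t)\phi_p(x)$ gives precisely \eqref{equality1}; note the case $p=0$ is included consistently, since then $p^{2\beta}=0$ and the exponential factors equal $1$.

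The step requiring most care is the justification of the projection: one must differentiate $u_p(t)=\inpro{u(\cdot,t)}{\phi_p}$ under the inner product and interchange $\partial_t$ with the infinite series defining $(-\Delta)^\beta u$. This is exactly where the regularity implicit in ``$u$ is a solution'' is used, together with the restriction $\beta>1/2$ (cf. \eqref{beta-condition}), which ensures the relevant spectral series converge in $L^2(\Omega)$. Assuming $u(\cdot,t)$ lies in the appropriate fractional Sobolev space and $t\mapsto u(\cdot,t)$ is differentiable in $L^2(\Omega)$, the interchange is legitimate, and the self-adjointness of $A$ lets one transfer $(-\Delta)^\beta$ onto $\phi_p$ to produce the eigenvalue $p^{2\beta}$. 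The nonlinear term causes no genuine obstacle for this derivation: we only need $F(u(\cdot,t))\in L^2(\Omega)$ so that $F_p(u)(t)$ is well defined and the Duhamel integral is meaningful, after which the argument is identical to the linear inhomogeneous case.
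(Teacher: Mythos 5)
Your proposal is correct and follows essentially the same route as the paper's proof: project the equation onto the eigenbasis to get the scalar ODE $u_p'+p^{2\beta}u_p=F_p(u)+g_p$, integrate with the factor $e^{p^{2\beta}t}$ from $t$ to $T$ using $u_p(T)=\inpro{u_T}{\phi_p}$, and resum. Your added remarks on justifying the term-by-term differentiation are reasonable (the paper passes over this silently), though note the restriction $\beta>\tfrac12$ is actually invoked later in the paper for the convergence of $\sum_l l^{-2\beta}$ in the aliasing estimates, not for this lemma.
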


\begin{proof}
	Suppose the Problem \eqref{problem2} has the solution $u$ which given by Fourier series
	\begin{equation}
	u(x,t)= \sum_{p=1}^\infty u_p(t) \phi_p(x), \quad \text{where}\quad u_p(t)= \Big<u(\cdot,t), \phi_p \Big>.
	\end{equation}
Multiplying both sides of the equation $u_t + (-\Delta )^\beta u = F(u(x,t))+g(x,t)$ by $\phi_p(x)$ and integrating over $\Omega$ leads to
\begin{equation}
\frac{d}{dt}u_p(t)+ p^{2\beta} u_p(t)= F_p(u(t))+g_p(t).  \label{eq1}
\end{equation}
Here  we have used Definition \eqref{Def1}.
Multiplying both sides of \eqref{eq1} by $e^{p^{2\beta t}}$, and  by taking the integral from $t$ to $T$ we get
\begin{equation}
\int_t^T \Big( e^{sp^{2\beta }}u_p(s) \Big)'(s)ds= \int_t^T e^{sp^{2\beta }} g_p(s)ds+ \int_t^T e^{sp^{2\beta }} F_p(u(s))ds
\end{equation}
The latter equality can be transformed into
\begin{equation}
u_p(t)= e^{(T-t)p^{2\beta}}  \Big<u_T, \phi_p \Big> -\int_t^T  e^{(s-t)p^{2\beta}} g_p(s)ds-\int_t^T  e^{(s-t)p^{2\beta}} F_p(u)(s)ds
\end{equation}
where we note that $u_p(T)= \Big<u_T, \phi_p \Big> $. This completes the proof of Lemma.
\end{proof}
	First, we  state  following Lemmas that will be used in this paper
	\begin{lemma} \label{lemma2.3}
		[Lemma 2.4 in \cite{Tuan1}]\label{cc}
		Let $p, n \in \mathbb{N}$ such that $0\le p \le n-1$. Assume that $u_T$ is piecewise $C^1$ on $[0,\pi]$ .
		Then
		\begin{equation}\label{y1111}
		 \langle u_T, \phi_p \rangle= \begin{cases}
		\text{ } \text{ } \text{ } \dfrac{1}{n}\sum_{k=1}^n u_T(x_k) - \widetilde {G}_{n0}, & \qquad p=0,\\\\
		\text{ } \text{ } \text{ }  \dfrac{\pi}{n} \sum_{k=1}^n u_T(x_k) \phi_p(x_k) -  \widetilde {G}_{np}, & \qquad 1 \le p \le n-1.
		\end{cases}
		\end{equation}
		where
		\begin{equation}\label{useful2}
		\widetilde {G}_{np}= \begin{cases}
		\text{ } \text{ } \text{ } \sqrt{\dfrac{2}{\pi}}\sum_{l=1}^\infty (-1)^ l  \langle u_T, \phi_{2ln} \rangle, & \qquad p=0,\\\\
		\sum_{l=1}^\infty (-1)^l [   \langle u_T, \phi_{p+2ln} \rangle +  \langle u_T, \phi_{-p+2ln}\rangle], & \qquad 1 \le p \le n-1.
		\end{cases}
		\end{equation}
	\end{lemma}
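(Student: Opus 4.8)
The plan is to derive the identity by inserting the Fourier cosine expansion of $u_T$ into the discrete quadrature sum on the right-hand side and then exploiting a \emph{discrete orthogonality} relation for the eigenfunctions sampled at the nodes $x_k=\frac{(2k-1)\pi}{2n}$. Since $u_T$ is piecewise $C^1$ on $[0,\pi]$, Dirichlet's theorem guarantees that its cosine series $u_T(x)=\sum_{m=0}^\infty \inpro{u_T}{\phi_m}\phi_m(x)$ converges pointwise at the interior nodes $x_k$; I would record this first, as it is what makes every subsequent manipulation legitimate.

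The core of the argument is the evaluation of the sampled inner products. Writing $\phi_p(x)=\sqrt{2/\pi}\cos(px)$ and using the product-to-sum formula, for $1\le p\le n-1$ one gets $\frac{\pi}{n}\sum_{k=1}^n\phi_m(x_k)\phi_p(x_k)=D_{m+p}+D_{m-p}$, where $D_q:=\frac1n\sum_{k=1}^n\cos(qx_k)$. I would then compute $D_q$ in closed form via the sum of a cosine arithmetic progression: the angles $qx_k$ have first term $q\pi/(2n)$ and common difference $q\pi/n$, so the standard formula collapses to $D_q=\frac{\sin(q\pi)}{2n\sin(q\pi/(2n))}$. For integer $q$ the numerator vanishes, so $D_q=0$ unless $\sin(q\pi/(2n))=0$, i.e. unless $q\in 2n\mathbb{Z}$; and when $q=2n\ell$ a direct evaluation of $\cos\big(\ell(2k-1)\pi\big)=(-1)^\ell$ gives $D_q=(-1)^\ell$. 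This is the discrete orthogonality relation that drives the whole computation.

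With this in hand I would substitute the cosine series into $\frac{\pi}{n}\sum_{k=1}^n u_T(x_k)\phi_p(x_k)$, interchange the finite $k$-sum with the convergent $m$-series (harmless because the outer sum is finite), and keep only the surviving indices. For $1\le p\le n-1$ the contributions $D_{m\pm p}\ne 0$ occur exactly when $m-p=0$ (the main term $\inpro{u_T}{\phi_p}$), when $m-p=2n\ell$ with $\ell\ge 1$, and when $m+p=2n\ell$ with $\ell\ge 1$, i.e. $m=2n\ell\pm p$; collecting the weights $(-1)^\ell$ reproduces precisely $\inpro{u_T}{\phi_p}+\widetilde G_{np}$, and solving for $\inpro{u_T}{\phi_p}$ yields the claimed formula. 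The case $p=0$ is identical in spirit: the constant eigenfunction samples to $D_m$, only the indices $m=2n\ell$ survive, and rearranging gives the mean-value version with the factor $\sqrt{2/\pi}$ appearing in $\widetilde G_{n0}$.

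I expect the main obstacle to be the bookkeeping in the two places where a slip is easy: evaluating $D_q$ at the resonant indices $q\in 2n\mathbb{Z}$ (where the arithmetic-progression formula is indeterminate and must be replaced by direct summation), and verifying that the aliased indices $m=2n\ell\pm p$ are mutually distinct, all nonnegative, and exhaust every nonzero contribution, so that no term of $\widetilde G_{np}$ is double counted or omitted. The convergence and interchange step is routine given piecewise-$C^1$ regularity, so the trigonometric identity for $D_q$ together with this index accounting is genuinely the heart of the proof.
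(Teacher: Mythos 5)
The paper does not actually prove this lemma: it is quoted verbatim from Lemma~2.4 of the cited reference \cite{Tuan1}, so there is no in-paper argument to compare against. Your proof is the standard one for this kind of aliasing identity and it is essentially correct. The closed form $D_q=\frac{\sin(q\pi)}{2n\sin(q\pi/(2n))}$ for the midpoint nodes $x_k=\pi\frac{2k-1}{2n}$ is right, as is the resolution of the resonant case $q=2n\ell$ by direct evaluation of $\cos(\ell\pi(2k-1))=(-1)^\ell$; the index bookkeeping also checks out, since $m-p=2n\ell_1$ and $m+p=2n\ell_2$ cannot hold simultaneously for $1\le p\le n-1$ (it would force $p\in n\mathbb{Z}$), so the sets $\{p+2n\ell\}_{\ell\ge1}$ and $\{-p+2n\ell\}_{\ell\ge1}$ are disjoint and no term of $\widetilde G_{np}$ is double counted. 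Two small points deserve explicit mention rather than the ``identical in spirit'' wave. First, for $p=0$ the $m=0$ term of the expansion contributes $\phi_0\cdot\langle u_T,\phi_0\rangle$ where $\phi_0$ is the constant value of the zeroth eigenfunction, so the stated identity $\langle u_T,\phi_0\rangle=\frac1n\sum_k u_T(x_k)-\widetilde G_{n0}$ holds only with the convention $\phi_0\equiv 1$ (not the $L^2$-normalized constant $1/\sqrt{\pi}$); this normalization quirk is inherited from the cited statement, but your proof is the place where it surfaces and should be flagged. Second, since the Fourier coefficients of a piecewise $C^1$ function need not be absolutely summable, the ``interchange'' is best phrased via partial sums: apply the finite quadrature to $S_M=\sum_{m\le M}\langle u_T,\phi_m\rangle\phi_m$ and let $M\to\infty$, which simultaneously justifies the convergence of the series defining $\widetilde G_{np}$ in the indicated grouping. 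With those two clarifications the argument is complete.
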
	
Applying Lemma \ref{cc} we obtain the next result.
	\begin{lemma}\label{cc2}
		Let $p, n \in \mathbb{N}$ such that $0\le p \le n-1$. Assume that $x\to g(x,t)$ is piecewise $C^1$ on $[0,\pi]$ .
		Then
		\begin{equation}\label{y1111}
		 \langle g\left(\cdot,t\right), \phi_p \rangle= \begin{cases}
		\text{ } \text{ } \text{ } \dfrac{1}{n}\sum_{k=1}^n g(x_k,t) - \widetilde {H}_{n0}(t), & \qquad p=0,\\\\
		\text{ } \text{ } \text{ }  \dfrac{\pi}{n} \sum_{k=1}^n g(x_k,t) \phi_p(x_k) -  \widetilde {H}_{np}(t), & \qquad 1 \le p \le n-1.
		\end{cases}
		\end{equation}
		where
		\begin{equation}\label{useful2}
		\widetilde {H}_{np}(t)= \begin{cases}
		\text{ } \text{ } \text{ } \sqrt{\dfrac{2}{\pi}}\sum_{l=1}^\infty (-1)^ l  \langle g\left(\cdot,t\right), \phi_{2ln} \rangle, & \qquad p=0,\\\\
		\sum_{l=1}^\infty (-1)^l \Bigg[   \langle g\left(\cdot,t\right), \phi_{p+2ln} \rangle +  \langle g\left(\cdot,t\right), \phi_{-p+2ln} \Big\rangle\Bigg], & \qquad 1 \le p \le n-1.
		\end{cases}
		\end{equation}
	\end{lemma}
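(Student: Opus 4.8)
The plan is to deduce Lemma \ref{cc2} directly from Lemma \ref{cc}, treating the time variable $t$ as a fixed parameter throughout. The essential observation is that both the interpolation nodes $x_k=\pi(2k-1)/(2n)$ and the eigenfunctions $\phi_p$ depend only on the spatial variable $x$; hence Lemma \ref{cc} is a statement purely about the spatial profile of an arbitrary piecewise $C^1$ function on $[0,\pi]$, and nothing in its hypotheses or conclusion refers to $t$.

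First I would fix an arbitrary $t\in(0,T)$ and introduce the auxiliary function $w(x):=g(x,t)$. By the hypothesis of Lemma \ref{cc2}, the map $x\mapsto g(x,t)$ is piecewise $C^1$ on $[0,\pi]$, so $w$ satisfies precisely the regularity requirement placed on $u_T$ in Lemma \ref{cc}. Applying Lemma \ref{cc} with $w$ playing the role of $u_T$ then gives, for $1\le p\le n-1$,
\[
\langle w,\phi_p\rangle=\frac{\pi}{n}\sum_{k=1}^n w(x_k)\phi_p(x_k)-\sum_{l=1}^\infty(-1)^l\big[\langle w,\phi_{p+2ln}\rangle+\langle w,\phi_{-p+2ln}\rangle\big],
\]
together with the corresponding identity carrying the prefactor $1/n$ in the case $p=0$.

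Next I would simply unwind the definitions. Since $w(x_k)=g(x_k,t)$ and $\langle w,\phi_m\rangle=\langle g(\cdot,t),\phi_m\rangle$ for every index $m$, the discrete sum is exactly $\frac{\pi}{n}\sum_{k=1}^n g(x_k,t)\phi_p(x_k)$ and the series on the right-hand side is exactly the correction term $\widetilde H_{np}(t)$ recorded in the statement. This reproduces the claimed representation of $\langle g(\cdot,t),\phi_p\rangle$; because $t\in(0,T)$ was arbitrary, the identity holds for every $t$, which completes the argument.

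I do not anticipate a genuine obstacle, since the entire content is already packaged in Lemma \ref{cc} and the time variable enters only as an inert parameter. The single point deserving a word of care is the convergence of the series defining $\widetilde H_{np}(t)$ for each fixed $t$, which is inherited termwise from the convergence guaranteed by Lemma \ref{cc} under the piecewise-$C^1$ assumption on $g(\cdot,t)$.
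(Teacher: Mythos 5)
Your argument is correct and is exactly what the paper does: the paper states Lemma \ref{cc2} with the single remark ``Applying Lemma \ref{cc} we obtain the next result,'' i.e.\ it invokes Lemma \ref{cc} for the spatial profile $x\mapsto g(x,t)$ with $t$ frozen as a parameter, which is precisely your substitution $w(x)=g(x,t)$. No further comment is needed.
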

	

We use the following representation of $u$ in the next lemma  to find  an  estimator of $u(x,t)$.
\begin{lemma}
		Suppose that  problem \eqref{problem2}
		has solution $u$, then $u$ can be  represented  as follows
	\begin{align}
	u(x,t)&= \Phi_{M_n,n}( u_T)(x,t)-\widetilde \Phi_{M_n,n}( g)(x,t)\nonumber\\
	&	-\sum_{p=0}^{M_n}  e^{(T-t)p^{2\beta}}  \widetilde {G}_{np} \phi_p(x)+\sum_{p=0}^{M_n} \Bigg[  \int_t^T  e^{(s-t)p^{2\beta}}  \widetilde {H}_{np}(s)ds\Bigg] \phi_p(x)\nn\\	&-\sum_{p=0}^{M_n}   \Bigg[  \int_t^T e^{(s-t)p^{2\beta}}  F_p(u)(s)ds \Bigg]\phi_p(x)\nonumber\\
	&+\sum_{p=M_n+1}^\infty \Bigg[ e^{(T-t)p^{2\beta}}u_p (T)-\int_t^T e^{(s-t)p^{2\beta}}  g_p(s)ds -\int_t^T e^{(s-t)p^{2\beta}}  F_p(u)(s)ds \Bigg]\phi_p(x).
	\end{align}
	Here $M_n$ is the parameter depending on $n$ such that $0<M_n <n$. The terms $\widetilde H, \widetilde G$ are defined  in Lemma \ref{cc2}.
	$\Phi, \widetilde \Phi$ are defined for all $f \in L^2(\Omega)$ as follows
	{\begin{align} \label{phi1}
{\bf	\Phi}_{M_n,n}(f)(x,t)= \dfrac{1}{n}\sum_{k=1}^n f(x_k, t)+\sum_{p=1}^{M_n}  e^{(T-t)p^{2\beta}} \Bigg[ \dfrac{\pi}{n} \sum_{k=1}^n f(x_k,t) \phi_p(x_k)  \Bigg]\phi_p(x)
	\end{align}
}
	and
	\begin{align} \label{phi2}
	\widetilde \Phi_{M_n,n}( f)(x,t)= \dfrac{1}{n}\sum_{k=1}^n f(x_k,t)+\sum_{p=1}^{M_n} \Bigg[  \int_t^T  e^{(s-t)p^{2\beta}} \left( \dfrac{\pi}{n} \sum_{k=1}^n f(x_k,t) \phi_p(x_k) \right) ds\Bigg] \phi_p(x).
	\end{align}
\end{lemma}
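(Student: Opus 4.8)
The plan is to start from the exact series representation in Lemma~\ref{lemma-solution-rep} and merely reorganize it: split the frequency index at the truncation level $M_n$ and, on the low modes, replace the true Fourier coefficients of the data by their quadrature surrogates. Concretely, I would write the sum $\sum_{p=0}^\infty$ in \eqref{equality1} as $\sum_{p=0}^{M_n}+\sum_{p=M_n+1}^\infty$. Since $M_n<n$, every low index satisfies $0\le p\le n-1$, so Lemma~\ref{cc} and Lemma~\ref{cc2} apply verbatim on that range.

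On the low modes I would substitute the quadrature identities. For the final-value contribution, replace $\langle u_T,\phi_p\rangle$ by $\tfrac{\pi}{n}\sum_k u_T(x_k)\phi_p(x_k)-\widetilde G_{np}$ (and the $p=0$ case by $\tfrac1n\sum_k u_T(x_k)-\widetilde G_{n0}$). The discrete pieces, once multiplied by $e^{(T-t)p^{2\beta}}\phi_p(x)$ and summed over $0\le p\le M_n$, are by definition exactly $\Phi_{M_n,n}(u_T)(x,t)$ of \eqref{phi1}, while the aliasing pieces produce $-\sum_{p=0}^{M_n}e^{(T-t)p^{2\beta}}\widetilde G_{np}\phi_p(x)$. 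For the source contribution I would apply Lemma~\ref{cc2} to $g_p(s)=\langle g(\cdot,s),\phi_p\rangle$ under the integral, writing it as $\tfrac{\pi}{n}\sum_k g(x_k,s)\phi_p(x_k)-\widetilde H_{np}(s)$; after interchanging the finite sum over $k$ with the $s$-integral (legitimate since the sum is finite), the discrete part assembles into $\widetilde\Phi_{M_n,n}(g)(x,t)$ of \eqref{phi2}, and the remainder yields $+\sum_{p=0}^{M_n}\big[\int_t^T e^{(s-t)p^{2\beta}}\widetilde H_{np}(s)\,ds\big]\phi_p(x)$.

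The nonlinear terms require no substitution: the low-mode part $-\sum_{p=0}^{M_n}\big[\int_t^T e^{(s-t)p^{2\beta}}F_p(u)(s)\,ds\big]\phi_p(x)$ is carried over unchanged, and the entire high-mode block $\sum_{p=M_n+1}^\infty[\dots]\phi_p(x)$ is copied directly from \eqref{equality1}, using $u_p(T)=\langle u_T,\phi_p\rangle$. Collecting all contributions reproduces the five groups of terms in the claimed identity, so the argument is essentially a bookkeeping rearrangement rather than a genuinely analytic one.

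The points demanding care are the $p=0$ mode, where the quadrature weight and normalization differ ($\tfrac1n$ versus $\tfrac{\pi}{n}$) and must be matched against the leading constant term in the definitions \eqref{phi1}--\eqref{phi2}, and the Fubini-type interchange of the node-sum with the time integral in the source term. I would also check that the hypotheses of Lemmas~\ref{cc} and~\ref{cc2}, namely piecewise $C^1$ regularity of $u_T$ and of $x\mapsto g(x,t)$, are in force, so that the aliasing series defining $\widetilde G_{np}$ and $\widetilde H_{np}(s)$ converge; granting this, no real difficulty remains.
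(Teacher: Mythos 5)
Your proposal matches the paper's proof essentially verbatim: the paper likewise splits the series from Lemma \ref{lemma-solution-rep} at $p=M_n$, applies Lemmas \ref{cc} and \ref{cc2} to the low-mode block to trade $\langle u_T,\phi_p\rangle$ and $g_p(s)$ for their quadrature surrogates plus aliasing remainders, and then reassembles the discrete parts into $\Phi_{M_n,n}$ and $\widetilde\Phi_{M_n,n}$ while carrying the nonlinear and high-mode terms unchanged. Your additional attention to the $p=0$ normalization and the sum--integral interchange is sound and consistent with (indeed slightly more careful than) the paper's ``simple computation.''
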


\begin{proof}
	By  Lemma \ref{lemma-solution-rep}, we get
	\begin{align}
	u(x,t)&= \sum_{p=0}^\infty \Bigg[ e^{(T-t)p^{2\beta}}u_p (T)-\int_t^T e^{(s-t)p^{2\beta}}  g_p(s)ds -\int_t^T e^{(s-t)p^{2\beta}}  F_p(u)(s)ds \Bigg]\phi_p(x)\nonumber\\
	&=\underbrace{\sum_{p=0}^{M_n} \Bigg[ e^{(T-t)p^{2\beta}}u_p (T)-\int_t^T e^{(s-t)p^{2\beta}}  g_p(s)ds -\int_t^T e^{(s-t)p^{2\beta}}  F_p(u)(s)ds \Bigg]\phi_p(x)}_{:=A_1}\nonumber\\
	&+\sum_{p=M_n+1}^\infty \Bigg[ e^{(T-t)p^{2\beta}}u_p (T)-\int_t^T e^{(s-t)p^{2\beta}}  g_p(s)ds -\int_t^T e^{(s-t)p^{2\beta}}  F_p(u)(s)ds \Bigg]\phi_p(x). \label{equ1}
	\end{align}
	By using Lemma \ref{cc} and \ref{cc2}, we obtain
	
	\begin{align}
	A_1&=  \dfrac{1}{n}\sum_{k=1}^n u_T(x_k) - \widetilde {G}_{n0}+\sum_{p=1}^{M_n}  e^{(T-t)p^{2\beta}} \Bigg[ \dfrac{\pi}{n} \sum_{k=1}^n u_T(x_k) \phi_p(x_k) -  \widetilde {G}_{np} \Bigg]\phi_p(x)\nonumber\\
	& - \int_t^T \Big[ \dfrac{1}{n}\sum_{k=1}^n g(x_k,s) - \widetilde {H}_{n0}(s) \Big]ds- \sum_{p=1}^{M_n} \Bigg[  \int_t^T  e^{(s-t)p^{2\beta}} \left( \dfrac{\pi}{n} \sum_{k=1}^n g(x_k,t) \phi_p(x_k) -  \widetilde {H}_{np}(s)\right)ds \Bigg] \phi_p(x)\nonumber\\
	&-\sum_{p=0}^{M_n}   \Bigg[  \int_t^T e^{(s-t)p^{2\beta}}  F_p(u)(s)ds \Bigg]\phi_p(x)
	\end{align}
	By a simple computation, the term $A_1$ is equal to
	\begin{align}
	A_1&= \Phi_{M_n,n}( u_T)(x,t)-\widetilde \Phi_{M_n,n}( g)(x,t) -\sum_{p=0}^{M_n}  e^{(T-t)p^{2\beta}}  \widetilde {G}_{np} \phi_p(x)+\sum_{p=0}^{M_n} \Bigg[  \int_t^T  e^{(s-t)p^{2\beta}}  \widetilde {H}_{np}(s)ds\Bigg] \phi_p(x)\nn\\
	& -\sum_{p=0}^{M_n}   \Bigg[  \int_t^T e^{(s-t)p^{2\beta}}  F_p(u)(s)ds \Bigg]\phi_p(x). \label{equ2}
	\end{align}
	Combining \eqref{equ1} and \eqref{equ2} gives the proof of the  Lemma.

\end{proof}

\subsection{ Fourier method and regularization}
	We make use of the  following assumptions on the functions $F, g$
	\begin{enumerate}[{\bf \upshape(i)}]

		\item   $F \in L^\infty (\mathbb{R})$ and  $F$ is a Lipschitz function, i.e. there exists a positive constant K such that
		\begin{equation}\label{lipschitz-condition}
		|F(\xi_1)- F(\xi_2)| \le K |\xi_1-\xi_2|,\quad \forall \xi_1, \xi_2 \in \mathbb{R}.
		\end{equation}
		
		\item There exists positive constant $\gamma>1$  such that
		\begin{equation}  \label{assu2}
		\sup_{0 \le t \le T } \Big[\sum_{p=0}^\infty p^{2\gamma} \Big<g(\cdot,t), \phi_p\Big>^2\Big]  \le \widetilde E_2.
		\end{equation}
		
		\item The  regularized parameter $M_n$ satisfies
		\begin{equation}
		\lim_{n\to +\infty}  \frac{(M_n+1)e^{2T M_n^{2\beta}}}{n}\quad \text{bounded}.
		\end{equation}
		
	\end{enumerate}

\begin{theorem}\label{main-theorem-1}
Suppose $\beta>1/2$ and $a(t)=1$ in equation \eqref{problem2}.
We construct a  regularized  approximate solution of equation \eqref{problem2} denoted by $\overline U_{M_n,n}$  that is defined by the following nonlinear integral equation
\begin{align}
\overline U_{M_n,n}(x,t)&=  {\bf\Phi}_{M_n,n}(\widetilde u_T )(x,t)-\widetilde \Phi_{M_n,n}( \widetilde g)(x,t)-\sum_{p=0}^{M_n}   \Bigg[   \int_t^T e^{(s-t)p^{2\beta}}  F_p(\overline U_{M_n,n})(s)ds \Bigg]\phi_p(x). \label{resol0}
\end{align}
The terms ${\bf\Phi}_{M_n,n}(\widetilde u_T )(x,t)$ and $\widetilde \Phi_{M_n,n}( \widetilde g)(x,t)$ are defined above in equations  \eqref{phi1} and \eqref{phi2} respectively.
Assume that  problem \eqref{problem2} has unique solution  $u \in C([0,T];L^2(\Omega))$. \\
If there exists $\widetilde P_1$ such that
\begin{equation} \label{ass1}
\sup_{0\le t \le T} \sum_{p=1}^\infty e^{2p^{2\beta }t} |<u(\cdot, t),\phi_p>|^2 \le  \widetilde P_1
\end{equation}
Then we have
\begin{align}
\mathbb{\bf E} \Big\|\overline U_{M_n,n}(\cdot,t)-u(\cdot,t)\Big\|^2_{L^2(\Omega)} \le 6e^{-2M_n^{2\beta} t} \Big[  C_3 \frac{(M_n+1) e^{2TM_n^{2\beta}} }{n}+ \widetilde P_1\Big] e^{6K(T-t)}. \label{estimate1}
\end{align}
If there exists $\alpha>0$ and $\widetilde P_2$ such that
\begin{equation}\label{ass2}
\sup_{0\le t \le T} \sum_{p=1}^\infty p^{2\beta \alpha} e^{2p^{2\beta }t} |<u(\cdot, t),\phi_p>|^2 \le  \widetilde P_2
\end{equation}
Then we have
\begin{align}
\mathbb{\bf E} \|\overline U_{M_n,n}(\cdot,t)-u(\cdot,t)\|^2_{L^2(\Omega)}  \le 6e^{-2M_n^{2\beta} t} \Big[  C_3 \frac{(M_n+1) e^{2TM_n^{2\beta}} }{n}+M_n^{-2\beta \alpha} \widetilde P_2\Big] e^{6K(T-t)}. \label{estimate2}
\end{align}
\end{theorem}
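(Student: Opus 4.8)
The plan is to subtract the representation of $u$ supplied by the lemma preceding this theorem from the defining fixed-point equation \eqref{resol0} of $\overline U_{M_n,n}$, and to organize the error $\overline U_{M_n,n}(\cdot,t)-u(\cdot,t)$ into six groups: (i) the final-value noise ${\bf\Phi}_{M_n,n}(\widetilde u_T)-{\bf\Phi}_{M_n,n}(u_T)$, which by linearity of ${\bf\Phi}$ in its data is a linear functional of the $\sigma_k\epsilon_k$; (ii) the source noise $\widetilde\Phi_{M_n,n}(\widetilde g)-\widetilde\Phi_{M_n,n}(g)$, a linear functional of the $\vartheta\xi_k$; (iii) the final-value aliasing $\sum_{p=0}^{M_n}e^{(T-t)p^{2\beta}}\widetilde G_{np}\phi_p$; (iv) the source aliasing $\sum_{p=0}^{M_n}\big[\int_t^T e^{(s-t)p^{2\beta}}\widetilde H_{np}(s)\,ds\big]\phi_p$; (v) the nonlinear discrepancy $\sum_{p=0}^{M_n}\big[\int_t^T e^{(s-t)p^{2\beta}}(F_p(\overline U_{M_n,n})-F_p(u))(s)\,ds\big]\phi_p$; and (vi) the spectral tail $\sum_{p>M_n}u_p(t)\phi_p$, where I have used Lemma \ref{lemma-solution-rep} to recognize the high-frequency block of $u$ as exactly $\sum_{p>M_n}u_p(t)\phi_p$. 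Applying $\|\sum_{i=1}^{6}X_i\|^2\le 6\sum_{i=1}^{6}\|X_i\|^2$ and taking expectation reduces the claim to bounding six mean-square terms; the prefactor $6$ in \eqref{estimate1}--\eqref{estimate2} is precisely this combinatorial constant. (Existence and uniqueness of the fixed point $\overline U_{M_n,n}$ itself follows from a Banach contraction argument on $C([0,T];L^2(\Omega))$, since by \eqref{lipschitz-condition} the map is Lipschitz and only finitely many modes $p\le M_n$ are retained.)

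For the two stochastic groups I would use Parseval together with orthonormality of $\{\phi_p\}$ to express each mean-square norm as a sum over $p\le M_n$ of the variances of the empirical Fourier coefficients. Since the $\epsilon_k\sim\mathcal N(0,1)$ are independent with $\sigma_k<V_{\max}$, and the quadrature weights satisfy $\sum_{k=1}^{n}\phi_p(x_k)^2=O(n)$, each mode of group (i) contributes $O(n^{-1})\,e^{2(T-t)p^{2\beta}}$; for group (ii) I would instead invoke the It\^o isometry for the Brownian source $\xi_k$, converting $\mathbb{E}\left|\int_t^T e^{(s-t)p^{2\beta}}\xi_k(s)\,ds\right|^2$ into a controlled double time integral. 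Bounding $e^{2(T-t)p^{2\beta}}\le e^{2(T-t)M_n^{2\beta}}$ uniformly in $p\le M_n$ and summing the $M_n+1$ modes gives
\[
\mathbb{E}\|X_1(\cdot,t)\|^2+\mathbb{E}\|X_2(\cdot,t)\|^2\le C_3\,e^{-2M_n^{2\beta}t}\,\frac{(M_n+1)e^{2TM_n^{2\beta}}}{n},
\]
where $C_3$ absorbs $V_{\max}^2,\vartheta^2,\pi,T$; the factored-out $e^{-2M_n^{2\beta}t}$ arises simply by rewriting $e^{2(T-t)M_n^{2\beta}}=e^{-2M_n^{2\beta}t}e^{2TM_n^{2\beta}}$.

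Next, the deterministic groups are controlled by the a priori and smoothness hypotheses. The tail (vi) is the heart of the regularization: for $p>M_n$ one has $e^{2p^{2\beta}t}\ge e^{2M_n^{2\beta}t}$, hence $\|X_6(\cdot,t)\|^2=\sum_{p>M_n}|u_p(t)|^2\le e^{-2M_n^{2\beta}t}\sum_{p>M_n}e^{2p^{2\beta}t}|\inpro{u(\cdot,t)}{\phi_p}|^2\le e^{-2M_n^{2\beta}t}\widetilde P_1$ under \eqref{ass1}, which is the second summand in \eqref{estimate1}. For the aliasing groups (iii)--(iv) I note that, because $M_n<n$, every frequency occurring in $\widetilde G_{np}$ and $\widetilde H_{np}$ exceeds $n>M_n$, so those coefficients decay faster than the amplification $e^{2(T-t)M_n^{2\beta}}$ grows: group (iii) is absorbed into the $\widetilde P_1$ term via $u_T=u(\cdot,T)$ and \eqref{ass1}, while group (iv) is absorbed into the noise term via the smoothness bound \eqref{assu2} on $g$. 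Under the sharper hypothesis \eqref{ass2} the same tail estimate gains the factor $M_n^{-2\beta\alpha}$ by inserting $p^{2\beta\alpha}\ge M_n^{2\beta\alpha}$ before summing, which produces \eqref{estimate2}.

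The remaining and most delicate step is the nonlinear group (v), which couples the unknown error back into itself and must be closed through the exponentially ill-posed weights. By Parseval and \eqref{lipschitz-condition} one has $\sum_{p=0}^{M_n}|F_p(\overline U_{M_n,n})(s)-F_p(u)(s)|^2\le K^2\|\overline U_{M_n,n}(\cdot,s)-u(\cdot,s)\|_{L^2(\Omega)}^2$, so that, estimating $e^{2(s-t)p^{2\beta}}\le e^{2(s-t)M_n^{2\beta}}$, the mean-square of group (v) is dominated by a multiple of $\int_t^T e^{2(s-t)M_n^{2\beta}}\,\mathbb{E}\|\overline U_{M_n,n}(\cdot,s)-u(\cdot,s)\|^2\,ds$. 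The decisive trick is then to multiply the assembled inequality by $e^{2M_n^{2\beta}t}$ and set $\Theta(t)=e^{2M_n^{2\beta}t}\,\mathbb{E}\|\overline U_{M_n,n}(\cdot,t)-u(\cdot,t)\|_{L^2(\Omega)}^2$; the weight telescopes, $e^{2M_n^{2\beta}t}\,e^{2(s-t)M_n^{2\beta}}\,e^{-2M_n^{2\beta}s}=1$, collapsing the memory kernel to a constant and yielding an inequality of the form $\Theta(t)\le 6\big[C_3\frac{(M_n+1)e^{2TM_n^{2\beta}}}{n}+\widetilde P_1\big]+6K\int_t^T\Theta(s)\,ds$. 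A backward Gronwall inequality then produces the factor $e^{6K(T-t)}$ and, on unravelling $\Theta$, reproduces \eqref{estimate1}; \eqref{estimate2} is identical with $\widetilde P_1$ replaced by $M_n^{-2\beta\alpha}\widetilde P_2$. I expect the principal obstacle to be exactly this closure: arranging the $e^{(s-t)p^{2\beta}}$ factors so that, after the exponential weighting, the Gronwall kernel becomes a genuine constant multiple of $K$ rather than an $n$- or $M_n$-dependent quantity, since a careless application of Cauchy--Schwarz leaves behind an uncontrolled $(T-t)$ or $K^2$ and would spoil the clean exponent $6K(T-t)$.
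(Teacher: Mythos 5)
Your proposal follows essentially the same route as the paper's proof: the identical six-term decomposition (final-value noise, source noise, two aliasing terms, nonlinear discrepancy, spectral tail), the constant $6$ from $\|\sum_{i=1}^{6}X_i\|^2\le 6\sum_i\|X_i\|^2$, Parseval plus independence of the $\epsilon_k$ and the Brownian second moments for the stochastic groups, the tail bound via $e^{2p^{2\beta}t}\ge e^{2M_n^{2\beta}t}$ (with the extra $p^{2\beta\alpha}\ge M_n^{2\beta\alpha}$ insertion under \eqref{ass2}), and the closure of the nonlinear term by weighting with $e^{2M_n^{2\beta}t}$ and applying backward Gronwall to obtain $e^{6K(T-t)}$. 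The only cosmetic deviation is in which hypotheses you invoke to kill the aliasing sums (the paper derives $\widetilde G_{np}\le C n^{-2\beta}$ from $F\in L^\infty$ and the forward representation of $u_T$, and $\widetilde H_{np}\le Cn^{-\gamma}$ from \eqref{assu2}, folding both into the $C_3(M_n+1)e^{2TM_n^{2\beta}}/n$ term), but this does not change the argument in any essential way.
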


\begin{remark}
	In the previous  Theorem, with the estimate in  \eqref{estimate1}, we could get the error estimate  for $t>0$ but the  error estimate for $t=0$ is not useful.  Hence, we need  to assume \eqref{estimate2} to obtain the error estimate for  $t=0$. It is easy to see that  for  $t=0$, the error  is of order
	$\max\left( \frac{(M_n+1) e^{2TM_n^{2\beta}} }{n},M_n^{-2\beta \alpha}  \right)  $.
	
\end{remark}

\begin{remark}
	Let us choose $M_n$ as follows
	\begin{align}
	e^{2T M_n^{2\beta} }= n^{\sigma}, \quad 0<\sigma<1.
	\end{align}
	Then we have
	\begin{equation}
	M_n:= \left( \frac{\sigma}{2T} \log(n)  \right)^{\frac{1}{2\beta}}.
	\end{equation}

	If \eqref{ass1} holds then the error $\mathbb{\bf E} \|\overline U_{M_n,n}(\cdot,t)-u(\cdot,t)\|^2_{L^2(\Omega)} $ is of order $n^{ -\frac{\sigma t}{T} }$.
	
	If \eqref{ass2} holds then the error $\mathbb{\bf E} \|\overline U_{M_n,n}(\cdot,t)-u(\cdot,t)\|^2_{L^2(\Omega)} $ is of order
	 $$n^{ -\frac{\sigma t}{T} }  \max\Big( \frac{\left( \frac{\sigma}{2T} \log(n)  \right)^{\frac{1}{2\beta}}}{n^{1-\sigma}} , \left( \frac{\sigma}{2T} \log(n)  \right)^{-\alpha}     \Big).
	$$
	Hence if   \eqref{ass2} holds and $t=0$ then the error $\mathbb{\bf E} \|\overline U_{M_n,n}(\cdot,0)-u(\cdot,0)\|^2_{L^2(\Omega)} $ is of order
	 $$  \max\Big( \frac{\left( \frac{\sigma}{2T} \log(n)  \right)^{\frac{1}{2\beta}}}{n^{1-\sigma}} , \left( \frac{\sigma}{2T} \log(n)  \right)^{-\alpha}     \Big).$$
\end{remark}

\begin{proof}[\bf Proof of Theorem \ref{main-theorem-1}]
	We divide the proof into two parts. \\
{\bf Part 1.}	
The existence and  uniqueness of the solution to the nonlinear intergral equation \eqref{resol0}. \\
 Let us put
\begin{align}
\mathbb {G} \left(w(x,t)\right)&=  {\bf\Phi}_{M_n,n}(\widetilde u_T )(x,t)-\widetilde \Phi_{M_n,n}( \widetilde g)(x,t)-\sum_{p=0}^{M_n}   \Bigg[   \int_t^T e^{(s-t)p^{2\beta}}  F_p(w)(s)ds \Bigg]\phi_p(x). \label{resol}
\end{align}
for $w \in C([0,T];L^2(\Omega))$.
We claim that for every $v, w \in C([0,T];L^2(\Omega))$
\begin{equation}
\|\mathbb  G^m (v) - \mathbb G^m (w)\| \le \sqrt{ \frac{K^2 T e^{2TM_n^{2\beta} C} }{m!} }   |||v-w|||, \label{fixederror}
\end{equation}
where $|||. ||| $ is the sup norm in $C([0,T];L^2(\Omega))$.
For $m=1$, using H\"{o}lder's inequality we have
\begin{align}
\|\mathbb G(v)(.,t)-\mathbb G(w)(.,t)\|_{L^2(\Omega)} &=\frac{\pi}{2} \sum_{p=0}^{M_n} \Bigg[   \int_t^T e^{(s-t)p^{2\beta}}  \Big(F_p(v)(s) - F_p(w)(s) \Big)ds \Bigg]^2 \nn\\
&\le \frac{\pi}{2} \sum_{p=0}^{M_n} \Bigg[   \int_t^T e^{2(s-t)p^{2\beta}} ds  \int_t^T \Big(F_p(v)(s) - F_p(w)(s) \Big)^2ds \Bigg] \nn\\
&\le \frac{\pi}{2} \sum_{p=0}^{M_n} \Bigg[   e^{2T M_n^{2\beta}} (T-t)  \int_t^T \Big(F_p(v)(s) - F_p(w)(s) \Big)^2ds \Bigg] \nn\\
&= e^{2T M_n^{2\beta}} (T-t) \sum_{p=0}^{M_n} \Bigg[     \int_t^T \Big(F_p(v)(s) - F_p(w)(s) \Big)^2ds \Bigg] \nn\\
&\le e^{2T M_n^{2\beta}} (T-t) \|F(v)(.,t)-F(w)(.,t)\|^2_{L^2(\Omega)}. \label{err1}
\end{align}
Using the fact that  $F$ is globally Lipschitz,  we get
\begin{align}
\|F(v)(.,t)-F(w)(.,t)\|^2_{L^2(\Omega)} \le K^2 \|v(.,t)-w(.,t)\|2_{L^2(\Omega)} \le C K^2  |||v-w|||. \label{err2}
\end{align}
Combining \eqref{err1} and \eqref{err2}, we conclude that \eqref{fixederror} holds for $m=1$. By a similar method  as above, we can show that \eqref{fixederror} holds for $m=j$ for $j \in \mathbb{N}$.  It is obvious that
\begin{equation}
\lim_{m \to +\infty} \sqrt{ \frac{K^2 T e^{2TM_n^{2\beta} C} }{m!} } =0.
\end{equation}
It implies that there exists a positive integer number $m_0$, such that $\mathbb {G}^{m_0}$ is a contraction. It follows that the equation $\mathbb {G}^{m_0} v=v  $ has unique solution $\overline U_{M_n,n} \in C([0,T];L^2(\Omega))$. We claim that $ \mathbb{G} \left(\overline U_{M_n,n} \right) = \overline U_{M_n,n}$. In fact, we have
\begin{equation}
\mathbb{G} \left( \mathbb{G}^{m_0} \left(\overline U_{M_n,n} \right) \right)= \mathbb{G}    \left(\overline U_{M_n,n} \right)
\end{equation}
Hence
\begin{equation}
\mathbb{G}^{m_0} \left( \mathbb{G} \left(\overline U_{M_n,n} \right) \right)= \mathbb{G}    \left(\overline U_{M_n,n} \right)
\end{equation}
The latter equality leads to $\mathbb{G} \left(\overline U_{M_n,n} \right)$ is a fixed point of $\mathbb{G}^{m_0} $.
By  the uniqueness of the fixed point of $\mathbb{G}^{m_0} $ , we can conclude that $ \mathbb{G} \left(\overline U_{M_n,n} \right) = \overline U_{M_n,n}$. Part 1 is completely proved.

{\bf Part 2.} The error estimate between the regularized solution
$\overline U_{M_n,n}$ and the exact solution $u$.

From Lemma \ref{cc}  we get
\begin{align}
& \overline U_{M_n,n}(x,t)-u(x,t)\nonumber\\
&\quad \quad \quad  =\underbrace{\Phi_{M_n,n}( \widetilde u_T)(x,t) - \Phi_{M_n,n}( u_T)(x,t)}_{B_{1,M,n}(x,t) }+\underbrace{\widetilde \Phi_{M_n,n}( \widetilde g)(x,t)-\widetilde \Phi_{M_n,n}( g)(x,t)}_{{B_{2,M,n}(x,t) }}\nonumber\\
&\quad \quad \quad +\underbrace{\sum_{p=0}^{M_n}  e^{(T-t)p^{2\beta}}  \widetilde {G}_{np} \phi_p(x)}_{B_{3,M,n}(x,t)}-\underbrace{\sum_{p=0}^{M_n} \Bigg[  \int_t^T  e^{(s-t)p^{2\beta}}  \widetilde {H}_{np}(s)ds\Bigg] \phi_p(x)}_{{B_{4,M,n}(x,t)}}\nonumber\\
&\quad \quad \quad -\underbrace{\sum_{p=0}^{M_n}   \Bigg[   \int_t^T e^{(s-t)p^{2\beta}}  \Big( F_p(\overline U_{M_n,n})-F_p(u)(s)\Big)ds \Bigg]\phi_p(x)}_{{B_{5,M,n}(x,t)}}\nonumber\\
&\quad \quad \quad -\underbrace{\sum_{p=M_n+1}^\infty \Bigg[ e^{(T-t)p^{2\beta}}u_p (T)-\int_t^T e^{(s-t)p^{2\beta}}  g_p(s)ds -\int_t^T e^{(s-t)p^{2\beta}}  F_p(u)(s)ds \Bigg]\phi_p(x)}_{{B_{6,M_n,n}(x,t)}}
\end{align}
This implies that
\begin{align}
\|\overline U_{M_n,n}(\cdot,t)-u(\cdot,t)\|^2_{L^2(\Omega)} &\le 6 \|B_{1,M,n}\|^2_{L^2(\Omega)}+6 \|B_{2,M,n}\|^2_{L^2(\Omega)}+6 \|B_{3,M,n}\|^2_{L^2(\Omega)}\nonumber\\
&+6 \|B_{4,M,n}\|^2_{L^2(\Omega)}+6 \|B_{5,M,n}\|^2_{L^2(\Omega)}+6 \|B_{6,M,n}\|^2_{L^2(\Omega)}.
\end{align}
{\bf Step 1. } Estimate $\mathbb{\bf E} \|B_{1,M_n,n}\|^2_{L^2(\Omega)}.$ \\
Using the fact that $\widetilde u_T(x_k)=u_T(x_k)+\sigma_k \epsilon_k$, we get
\begin{align}
&\Phi_{M_n,n}( \widetilde u_T)(x,t) - \Phi_{M_n,n}( u_T)(x,t)\nonumber\\
&\quad \quad \quad \quad = \dfrac{1}{n} \sum_{k=1}^n  \Big[\widetilde u_T(x_k)(x_k)-u_T(x_k)\Big]\nonumber\\
&\quad \quad \quad \quad+\sum_{p=1}^{M_n}  e^{(T-t)p^{2\beta}} \Bigg[ \dfrac{\pi}{n} \sum_{k=1}^n \Big(u_T^\epsilon(x_k)-u_T(x_k)\Big) \phi_p(x_k)  \Bigg]\phi_p(x)\nonumber\\
&\quad \quad \quad \quad=\dfrac{1}{n} \sum_{k=1}^n  \sigma_k \epsilon_k+\sum_{p=1}^{M_n}  e^{(T-t)p^{2\beta}} \Bigg[ \dfrac{\pi}{n} \sum_{k=1}^n \sigma_k \epsilon_k\phi_p(x_k)  \Bigg]\phi_p(x).
\end{align}
The Parseval's identity implies that
\begin{align}
\|B_{1,M_n,n}\|^2_{L^2(\Omega)}&= \Big\|\Phi_{M,n}( \widetilde u_T(\cdot,t) - \Phi_{M,n}( u_T)(\cdot,t)\Big\|^2\nonumber\\
&=\frac{1}{n^2}  \Big[\sum_{k=1}^n  \sigma_k \epsilon_k \Big]^2+\sum_{p=1}^{M_n}  e^{2(T-t)p^{2\beta}} \Bigg[ \dfrac{\pi}{n} \sum_{k=1}^n \sigma_k \epsilon_k\phi_p(x_k)  \Bigg]^2.
\end{align}
Since the noises $ \epsilon_{k}$ are mutually independent, we obtain $ \mathbb{\bf E} ( \epsilon_i \epsilon_k) = 0$. Hence
\begin{align}
\mathbb{\bf E} \|B_{1,M_n,n}\|^2_{L^2(\Omega)}&=\frac{1}{n^2} \sum_{k=1}^{n} \sigma_k^2\mathbb{\bf E} \epsilon_k^2+\sum_{p=1}^{M_n}  e^{2(T-t)p^{2\beta}} \frac{\pi^2}{n^2} \sum_{k=1}^{n} \sigma_k^2 \mathbb{\bf E} \epsilon_k^2\nonumber\\
&\le \frac{V_{\max}^2}{n}+ \frac{\pi^2 V_{\max}^2}{n} \sum_{p=1}^{M_n}  e^{2(T-t)p^{2\beta}}\le \pi^2 V_{\max}^2 \frac{ \sum_{p=0}^{M_n}  e^{2(T-t)p^{2\beta}}}{n}.
\end{align}
{\bf Step 2. } Estimate $\mathbb{\bf E} \|B_{2,M_n,n}\|^2_{L^2(\Omega)}.$ \\
From equations \eqref{noise}, \eqref{phi1}, \eqref{phi2}, we deduce that
\begin{align}
&\widetilde \Phi_{M_n,n}( \widetilde g)(\cdot,t)-\widetilde \Phi_{M_n,n}( g)(\cdot,t)\nonumber\\
&\quad \quad = \dfrac{1}{n}\sum_{k=1}^n \vartheta\xi_{k}(t)+\sum_{p=1}^{M_n} \Bigg[  \int_t^T  e^{(s-t)p^{2\beta}} \left( \dfrac{\pi}{n} \sum_{k=1}^n \vartheta\xi_{k}(s) \phi_p(x_k) \right) ds\Bigg] \phi_p(x).
\end{align}
The Parseval's identity implies that
\begin{align}
\Big\|\widetilde \Phi_{M_n,n}( \widetilde g)(\cdot,t)-\widetilde \Phi_{M_n,n}( g)(\cdot,t)\Big\|_{L^2(\Omega)}^2&= \frac{1}{n^2} \Big[ \sum_{k=1}^n \vartheta\xi_{k}(t)\Big]^2\nonumber\\
&+\sum_{p=1}^{M_n} \Bigg[  \int_t^T  e^{(s-t)p^{2\beta}} \left( \dfrac{\pi}{n} \sum_{k=1}^n \vartheta\xi_{k}(s) \phi_p(x_k) \right) ds\Bigg]^2
\end{align}
From the properties of Brownian motion, we know that $\mathbb{\bf E}[\xi_{i}(t)\xi_{k}(t)] = 0$ for $k \ne i$ and $\mathbb{\bf E}\xi_{k}^2(t) = t$. By the H\"{o}lder's inequality, we obtain
\begin{align}
\mathbb{\bf E} \|B_{2,M_n,n}\|^2_{L^2(\Omega)} &\le \frac{\vartheta^2 t}{n}+\sum_{p=1}^{M_n} \Big[ \int_t^T e^{2(s-t)p^{2\beta}}ds  \int_t^T \frac{\pi^2}{n^2} \sum_{k=1}^n  \vartheta^2 \mathbb{\bf E}\xi_{k}^2(s)\phi_p^2(x_k) ds \Big]\nonumber\\
&\le \frac{\vartheta^2 T}{n}+\vartheta^2 T^3\frac{\sum_{p=1}^{M_n}  e^{2(T-t)p^{2\beta}}}{n} \le \vartheta^2 (T+T^3)\frac{\sum_{p=0}^{M_n}  e^{2(T-t)p^{2\beta}}}{n} .
\end{align}
{\bf Step 3. } Estimate $ \|B_{3,M_n,n}\|^2_{L^2(\Omega)}.$ \\
First,  we have the following inequality
\begin{align}
|F_p(u(t))| \le \|F(u(\cdot,t))\|_{L^2(\Omega)} &\le \|F(0)\|_{L^2(\Omega)}+ K\|u\|_{L^2(\Omega)}\nonumber\\
&\le \underbrace{\|F(0)\|_{L^2(\Omega)}+ \max(K,1) \sup_{0 \le t \le T }\|u(.,t)\|_{L^2(\Omega)}}_{:=E},
\end{align}
and $\|u(.,0)\|_{L^2(\Omega)} \le E$.
This implies that for all $p \in \mathbb{N}, p \ge 1$
\begin{align}
\left|\Big<  u(\cdot,T), \phi_p\Big> \right|&=\left| e^{-Tp^{2\beta}} u_p (0)+ \int_0^T  e^{(s-T)p^{2\beta}} F_p(u)(s)ds \right| \nonumber\\
&\le e^{-Tp^{2\beta}} \|u(.,0)\|_{L^2(\Omega)}+\frac{1-e^{-Tp^{2\beta}}}{p^{2\beta}} \|F(u)\|_{L^\infty} \le \frac{E}{T p^{2\beta}}+ \frac{E}{p^{2\beta}} \le \frac{\widetilde E_1}{p^{2\beta}}. \label{iq1}
\end{align}
where $\widetilde E_1= \frac{E}{T}+E$. \\
Now, we estimate the term $\widetilde G_{np} $ for $p \ge 0$. From equations  \eqref{useful2} and \eqref{iq1}, we have the bound for $\widetilde G_{n0}$
\begin{equation}
\widetilde G_{n0} \le \sqrt{\dfrac{2}{\pi}}\sum_{l=1}^\infty \left| {\langle u_T,{\phi _{2ln}}\rangle } \right|
\le \sqrt {\frac{2}{\pi }} \sum\limits_{l = 1}^\infty  \frac{ \widetilde E_1 }{2^{2\beta}l^{2\beta} n^{2\beta}} .\label{iq22}
\end{equation}
and the bound for $\widetilde G_{np}$
\begin{align}
\widetilde G_{np} &\le \sum_{l=1}^\infty  \Bigg|  \Big \langle u_T, \phi_{p+2ln} \Big\rangle + \Big \langle u_T, \phi_{-p+2ln} \Big\rangle\Bigg|\nonumber\\
&\le  \widetilde E \Bigg[\sum_{l=1}^\infty \frac{1}{(p+2ln)^{2\beta}} +\sum_{l=1}^\infty \frac{1}{(-p+2ln)^{2\beta}}\Bigg] \le \frac{2\widetilde E}{4^\beta} \Big( \sum\limits_{l = 1}^\infty  \frac{1}{l^{2\beta}} \Big) \frac{1}{n^{2\beta}}. \label{iq33}
\end{align}	
Since $\beta >\frac{1}{2}$ by assumption, we know that the series   $\sum\limits_{l = 1}^\infty  \frac{1}{l^{2\beta}}$ converges. Let us denote
\begin{equation}\label{beta-condition}
C_1(\beta, \widetilde E_1)=2\sqrt {\frac{2}{\pi }} \frac{\widetilde E}{4^\beta}  \sum\limits_{l = 1}^\infty  \frac{1}{l^{2\beta}}.
\end{equation}
Then combining \eqref{iq22} and \eqref{iq33} gives
\begin{equation}
\widetilde G_{np} \le \frac{ C_1(\beta, \widetilde E_1)}{n^{2\beta}},\quad \text{for all }\quad p \ge 0.  \label{Gnp}
\end{equation}
This leads to  the following estimate
\begin{align}
\|B_{3,M_n,n}\|^2_{L^2(\Omega)}= \sum_{p=0}^{M_n}  e^{2(T-t)p^{2\beta}} \Big| \widetilde {G}_{np}\Big|^2  \le C_1^2(\beta, \widetilde E_1) \frac{\sum_{p=0}^{M_n}  e^{2(T-t)p^{2\beta}}}{n^{4\beta}} .
\end{align}
{\bf Step 4. } Estimate $ \|B_{4,M_n,n}\|^2_{L^2(\Omega)}.$ \\
Using  the assumption \eqref{assu2},
we have
\begin{align} \label{iq4}
\Big<g(\cdot,t), \phi_p\Big>=p^{-\gamma} p^{\gamma} <g(\cdot,t), \phi_p> \le \frac{ \widetilde E_2}{p^{\gamma}}
\end{align}
for $\gamma >1$.
From equation \eqref{iq4}, we estimate  $\widetilde H_{n0}(t)$ as follows
\begin{equation}
\widetilde H_{n0}(t) \le \sqrt{\dfrac{2}{\pi}}\sum_{l=1}^\infty \left| {\langle g\left( \cdot,t \right),{\phi _{2ln}}\rangle } \right|
\le \sqrt {\frac{2}{\pi }} \sum\limits_{l = 1}^\infty  \frac{ \widetilde E_2}{2^{\gamma}l^{\gamma} n^{\gamma}}  \le \frac{C_2(\gamma, \widetilde E_2)}{n^{\gamma}} .
\end{equation}
For $p \ge 1$
\begin{align}
\widetilde H_{np}(t) &\le \sum_{l=1}^\infty  \Bigg|  \Big \langle g\left(\cdot,t\right), \phi_{p+2ln} \Big\rangle + \Big \langle g\left(\cdot,t\right), \phi_{-p+2ln} \Big\rangle\Bigg|\nonumber\\
&\le  \widetilde E_2 \Bigg[\sum_{l=1}^\infty \frac{1}{(p+2ln)^{\gamma}} +\sum_{l=1}^\infty \frac{1}{(-p+2ln)^{\gamma}}\Bigg] \le \frac{2\widetilde E_2}{2^\gamma} \Big( \sum\limits_{l = 1}^\infty  \frac{1}{l^{\gamma}} \Big) \frac{1}{n^{\gamma}} \le \frac{C_2(\gamma, \widetilde E_2)}{n^{\gamma}} .
\end{align}	
where $$C_2(\gamma, \widetilde E_2)=2\sqrt {\frac{2}{\pi }} \frac{\widetilde E_2}{2^\gamma}  \sum\limits_{l = 1}^\infty  \frac{1}{l^{\gamma}}. $$
This leads to the following estimation
\begin{align}
\|B_{4,M_n,n}\|^2_{L^2(\Omega)}&=\sum_{p=0}^{M_n} \Bigg[  \int_t^T  e^{(s-t)p^{2\beta}}  \widetilde {H}_{np}(s)ds\Bigg]^2 \le T^2 C_2^2(\gamma, \widetilde E_2) \frac{ \sum_{p=0}^{M_n}  e^{2(T-t)p^{2\beta}}}{n^{2\gamma}}.
\end{align}
{\bf Step 5.} Estimate $\mathbb{\bf E} \|B_{5,M_n,n}\|^2_{L^2(\Omega)}$.
Using   H\"{o}lder's  inequality and Lipschitz property of $F$, we have
\begin{align}
\|B_{5,M_n,n}\|^2_{L^2(\Omega)}&= \sum_{p=0}^{M_n}   \Bigg[   \int_t^T e^{(s-t)p^{2\beta}}  \Big( F_p(\overline U_{M_n,n})-F_p(u)(s)\Big)ds \Bigg]^2 \nonumber\\
& \le \sum_{p=0}^{M_n} \int_t^T e^{2(s-t)p^{2\beta}} \Big( F_p(\overline U_{M_n,n})-F_p(u)(s)\Big)^2 ds \nonumber\\
&\le \int_t^T e^{2(s-t)M_n^{2\beta}} \left[\sum_{p=0}^{M_n}  \Big( F_p(\overline U_{M_n,n})-F_p(u)(s)\Big)^2 \right]ds \nonumber\\
&\le K \int_t^T e^{2(s-t) M_n^{2\beta}} \|\overline U_{M_n,n}(.,s)-u(.,s)\|^2_{L^2(\Omega)}ds.
\end{align}
This implies that
\begin{equation}
\mathbb{\bf E}\|B_{5,M_n,n}\|^2_{L^2(\Omega)} \le  K \int_t^T e^{2(s-t)M_n^{2\beta}} \mathbb{\bf E} \|\overline U_{M_n,n}(.,s)-u(.,s)\|^2_{L^2(\Omega)} ds.
\end{equation}
{\bf Step 6.} Estimate $\|B_{6,M_n,n}\|^2_{L^2(\Omega)} $.\\
\begin{itemize}
	\item If \eqref{ass1} holds then we get
	\begin{align}
	\|B_{6,M_n,n}\|^2_{L^2(\Omega)} = \sum_{p=M_n+1}^\infty  \Big<u(\cdot,t), \phi_p \Big>^2& = \sum_{p=M_n+1}^\infty  e^{-2p^{2\beta} t}  e^{2p^{2\beta} t}\Big<u(\cdot,t), \phi_p \Big>^2\nn\\
	&\le e^{-2M_n^{2\beta} t} \widetilde P_1.
	\end{align}
	\item If  \eqref{ass2} holds then we get
	\begin{align}
	\|B_{6,M_n,n}\|^2_{L^2(\Omega)}  &= \sum_{p=M_n+1}^\infty  \Big<u(\cdot,t), \phi_p \Big>^2\nonumber\\
	&= \sum_{p=M_n+1}^\infty p^{-2\beta \alpha} e^{-2p^{2\beta} t}  e^{2p^{2\beta} t} p^{2\beta \alpha}\Big<u(\cdot,t), \phi_p \Big>^2\le M_n^{-2\beta \alpha}e^{-2M_n^{2\beta} t} \widetilde P_2.
	\end{align}
\end{itemize}
We divide  proof of the theorem in two cases:\\
{\bf Case 1:   When equation \eqref{ass1} holds}.\\
Combining  six steps  above, we get
\begin{align*}
&\mathbb{\bf E} \|\overline U_{M_n,n}(\cdot,t)-u(\cdot,t)\|^2_{L^2(\Omega)} \nn\\
\quad &\le \quad  6 \mathbb{\bf E} \|B_{1,M_n,n}\|^2+6 \mathbb{\bf E}\|B_{2,M_n,n}\|^2+6 \|B_{3,M_n,n}\|^2\nonumber\\
&+ \quad 6 \|B_{4,M_n,n}\|^2+6 \mathbb{\bf E}\|B_{5,M_n,n}\|^2+6 \|B_{6,M_n,n}\|^2 \nonumber\\
&\le \quad6 \Big( \pi^2 V_{\max}^2+\vartheta^2 (T+T^3)\Big) \frac{ \sum_{p=0}^M  e^{2(T-t)p^{2\beta}}}{n} \nonumber\\
&+\quad 6C_1^2(\beta, \widetilde E_1) \frac{\sum_{p=0}^{M_n}  e^{2(T-t)p^{2\beta}}}{n^{4\beta}}+6T^2 C_2^2(\gamma, \widetilde E_2) \frac{ \sum_{p=0}^{M_n}  e^{2(T-t)p^{2\beta}}}{n^{2\gamma}}\nonumber\\
&+\quad6  e^{-2M_n^{2\beta} t} \widetilde P_1+6K \int_t^T e^{2(s-t)M_n^{2\beta}} \mathbb{\bf E} \|\overline U_{M_n,n}(.,s)-u(.,s)\|^2ds.
\end{align*}
This leads to
\begin{align*}
&\mathbb{\bf E} \|\overline U_{M_n,n}(\cdot,t)-u(\cdot,t)\|^2_{L^2(\Omega)} \nonumber\\
&\le 6\Bigg[ \Big( \pi^2 V_{\max}^2+\vartheta^2 (T+T^3)\Big)+C_1^2(\beta, \widetilde E_1)+T^2 C_2^2(\gamma, \widetilde E_2) \Bigg] \frac{(M_n+1) e^{2(T-t)M_n^{2\beta}} }{\min(n, n^{4\beta}, n^{2\gamma})}\nonumber\\
&+6  e^{-2M_n^{2\beta} t} \widetilde P_1+6K \int_t^T e^{2(s-t)M^{2\beta}} \mathbb{\bf E} \|\overline U_{M_n,n}(.,s)-u(.,s)\|^2_{L^2(\Omega)}ds.
\end{align*}
Multiplying the latter inequality with $e^{2M_n^{2\beta} t}$ , we obtain
\begin{align}
e^{2M_n^{2\beta} t}\mathbb{\bf E} \|\overline U_{M_n,n}(\cdot,t)-u(\cdot,t)\|^2_{L^2(\Omega)} &\le 6 C_3 \frac{(M+1) e^{2TM^{2\beta}} }{\min(n, n^{4\beta}, n^{2\gamma})}+6 \widetilde P_1\nonumber\\
&+ 6K \int_t^T e^{2sM^{2\beta}} \mathbb{\bf E} \|\overline U_{M_n,n}(.,s)-u(.,s)\|^2_{L^2(\Omega)}ds.
\end{align}
Since $6 C_3 \frac{(M+1) e^{2TM^{2\beta}} }{\min(n, n^{4\beta}, n^{2\gamma})}$ does not depend on $t$, using Gronwall's inequality, we obtain
\begin{align}
e^{2M_n^{2\beta} t}\mathbb{\bf E} \|\overline U_{M_n,n}(\cdot,t)-u(\cdot,t)\|^2_{L^2(\Omega)} \le \Big[ 6 C_3 \frac{(M_n+1) e^{2TM_n^{2\beta}} }{n}+6 \widetilde P_1\Big] e^{6K(T-t)}
\end{align}
{\bf Case 2: When equation  \eqref{ass2} holds}.\\
By similar method as in the previous case, we get
\begin{align}
e^{2M_n^{2\beta} t}\mathbb{\bf E} \|\overline U_{M_n,n}(\cdot,t)-u(\cdot,t)\|^2_{L^2(\Omega)} \le \Big[ 6 C_3 \frac{(M_n+1) e^{2TM_n^{2\beta}} }{n}+6 M_n^{-2\beta \alpha}\widetilde P_2\Big] e^{6K(T-t)}
\end{align}
when \eqref{ass2} holds.
\end{proof}

\begin{remark}
	In a future work, we will study the random case for final value problem for the  time and space fractional diffusion equation in the sense of Chen et al.  \cite{Nane}.
\end{remark}
 {
	\begin{remark}
		In  theorem \ref{main-theorem-1} we assumed that  the source function  $F$ is  globally Lispchitz. In some   applications of our model, the extension to  locally Lipschitz source functions is required. 	Suppose that the source function
		$F: \mathbb{R} \to
		\mathbb{R}$ satisfies that
		\begin{equation}\label{dkff}
		\left| {F(u) - F(v)} \right|
		\leqslant K_F(Q)\left| {u - v} \right|,
		\end{equation}
for each $Q>0$ and
		for any $u$, $v$  with $|u|, |v| \leqslant Q$,
		where
		\[
		K_F(Q): =
		\sup \left\{ {\left| {\frac{{F(u) - F(v)}}{{u - v}}}
			\right|:\left| u \right|,\left| v \right|
			\leqslant Q,u \ne v}
		\right\} <  + \infty.
		\]
		Suppose  that $K_F(Q)$ is increasing and   $\mathop {\lim }\limits_{Q \to
			+ \infty } K_F(Q) =  + \infty $.
		In this case, as used by Tuan \cite{Tuan} we
		approximate $F$ by $\overline F_{Q_n}$ defined by
		\[
		\overline F_{Q_n}\left(u(x,t) \right)
		=
		\begin{cases}
		F(Q_n), &\quad u(x,t) >Q_n,\\
		F(u(x,t)), &\quad - Q_n \le u(x,t)\le Q_n, \\
		F(-Q_n),&\quad u(x,t) < - Q_n.
		\end{cases}
		\]
		where  the sequence $Q_n \to +\infty $ as $n \to +\infty$.  Using equation \eqref{resol0}, we introduce  the following  regularized solution
		\begin{align}
		\overline U_{M_n,Q_n,n}(x,t)&=  {\bf\Phi}_{M_n,n}(\widetilde u_T )(x,t)-\widetilde \Phi_{M_n,n}( \widetilde g)(x,t)-\sum_{p=0}^{M_n}   \Bigg[   \int_t^T e^{(s-t)p^{2\beta}}  \overline F_{Q_n,p}(\overline U_{M_n,Q_n,n})(s)ds \Bigg]\phi_p(x). \label{resol2}
		\end{align}
		Using a similar method as in the proof of Theorem \ref{main-theorem-1}, we can get the  error estimate  of $u$ by  $	\overline U_{M_n,Q_n,n}$. We   omit the details of the  proof here.
	\end{remark}
}
\begin{remark} \label{remark2}
	In  Theorem \ref{main-theorem-1}, to obtain the error estimate, we require the strong assumptions   \eqref{ass1} and \eqref{ass2} about $u$. This is a
	limitation of Theorem 1. There are not many functions $u$ that satisfies these conditions. Especially in practice, these
	conditions are more difficult to be satisfied and checked. To remove this limitation, we introduce a new regularization
	solution and introduce a new technique to estimate the error in Theorem \ref{main-theorem-1}. In fact, in the next  theorem we only need a
	weaker assumption for $u $. We assume that  $u \in C([0,T];H^\gamma(0,\pi))$ for any $\gamma>0$. This condition is more  natural.
\end{remark}

We have the following Lemma which gives a new representation of the solution when $g=0$ in equation \eqref{problem2}.
\begin{lemma}\label{lemma-solution-rep2}
	Suppose that $g=0$ and that  the problem \eqref{problem2}
	has solution $u$ then it is represented  as follows
	\begin{align}
		u(x,t)&= \Phi_{M_n,n}( u_T)(x,t)	-\sum_{p=0}^{M_n}  e^{(T-t)p^{2\beta}}  \widetilde {G}_{np} \phi_p(x)\nn\\	&-\sum_{p=0}^{M_n}   \Bigg[  \int_t^T e^{(s-t)p^{2\beta}}  F_p(u)(s)ds \Bigg]\phi_p(x)+\sum_{p=M_n+1}^\infty \Bigg[ \int_0^t e^{(s-t)p^{2\beta}} F_p(u)(s)ds\Bigg]\phi_p(x)\nn\\
		&+\sum_{p=M_n+1}^\infty  e^{-tp^{2\beta}}  u_p(0)\phi_p(x).
	\end{align}
\end{lemma}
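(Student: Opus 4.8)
The plan is to start from Lemma \ref{lemma-solution-rep} specialized to $g=0$, split the Fourier series of $u$ at the truncation level $M_n$, and then represent the coefficients $u_p(t)$ differently on the two frequency ranges. For the low modes $0\le p\le M_n$ I would reuse verbatim the computation of the term $A_1$ from the representation lemma preceding the Fourier subsection: setting $g=0$ forces $g_p\equiv 0$, $\widetilde H_{np}\equiv 0$ (by Lemma \ref{cc2}), and $\widetilde\Phi_{M_n,n}(g)\equiv 0$ (by the definition \eqref{phi2} evaluated at the zero function), so the quadrature identity of Lemma \ref{cc} applied to $\langle u_T,\phi_p\rangle$ collapses $A_1$ to exactly
\[
\Phi_{M_n,n}(u_T)(x,t)-\sum_{p=0}^{M_n} e^{(T-t)p^{2\beta}}\widetilde{G}_{np}\phi_p(x)-\sum_{p=0}^{M_n}\Bigg[\int_t^T e^{(s-t)p^{2\beta}}F_p(u)(s)\,ds\Bigg]\phi_p(x),
\]
which are precisely the first three terms of the claimed representation.

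The substance of the lemma is in the high modes $p\ge M_n+1$. Here Lemma \ref{lemma-solution-rep} gives the backward Duhamel form $u_p(t)=e^{(T-t)p^{2\beta}}u_p(T)-\int_t^T e^{(s-t)p^{2\beta}}F_p(u)(s)\,ds$, which carries the explosive factor $e^{(T-t)p^{2\beta}}$. Instead I would re-solve the modal ODE \eqref{eq1}, which with $g=0$ reads $\tfrac{d}{dt}u_p(t)+p^{2\beta}u_p(t)=F_p(u(t))$, but now integrate the identity $(e^{sp^{2\beta}}u_p(s))'=e^{sp^{2\beta}}F_p(u)(s)$ over $[0,t]$ rather than $[t,T]$. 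This yields the forward representation
\[
u_p(t)= e^{-tp^{2\beta}}\,u_p(0)+\int_0^t e^{(s-t)p^{2\beta}}F_p(u)(s)\,ds,
\]
in which the unstable exponential is replaced by the decaying factor $e^{-tp^{2\beta}}$. Summing this over $p\ge M_n+1$ against $\phi_p(x)$ produces exactly the last two terms of the statement, and adding the low-mode contribution completes the identity.

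I do not expect a genuine obstacle: the two integral forms of $u_p(t)$ are both equal to the true modal coefficient, so their agreement is automatic once \eqref{eq1} is integrated in the two directions. The only point requiring care is bookkeeping — confirming that every vanishing-in-$g$ term ($g_p$, $\widetilde H_{np}$, $\widetilde\Phi_{M_n,n}(g)$) is correctly dropped, and that the boundary values $u_p(T)=\langle u_T,\phi_p\rangle$ and $u_p(0)$ are used consistently. The conceptual content I would emphasize is that switching the high modes to the forward representation is exactly what makes the subsequent regularized solution \eqref{resol2} well behaved, since it trades $e^{(T-t)p^{2\beta}}$ for $e^{-tp^{2\beta}}$ on the truncated tail; this is the reason the weaker a priori condition $u\in C([0,T];H^\gamma(\Omega))$ of Remark \ref{remark2} will suffice in the analysis that follows.
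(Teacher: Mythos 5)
Your proposal is correct and matches the argument the paper intends: the paper itself omits the proof, stating only that it is ``a simple adaptation of Lemma \ref{lemma-solution-rep},'' and the adaptation you supply --- reusing the $A_1$ computation with all $g$-dependent terms dropped for $p\le M_n$, and integrating $(e^{sp^{2\beta}}u_p(s))'=e^{sp^{2\beta}}F_p(u)(s)$ over $[0,t]$ instead of $[t,T]$ to obtain the forward representation $u_p(t)=e^{-tp^{2\beta}}u_p(0)+\int_0^t e^{(s-t)p^{2\beta}}F_p(u)(s)\,ds$ on the tail --- is exactly the computation being referenced. Your closing remark about why the forward form on the high modes is what permits the weaker hypothesis $u\in C([0,T];H^{\gamma}(\Omega))$ is also accurate.
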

\begin{proof}
The proof is a simple adaptation of Lemma \ref{lemma-solution-rep} and we omit it here.
\end{proof}
\begin{theorem}\label{main-theorem-2}
	Let $g=0$.
Assume that $5K T <1$ (where $K$ is the Lipschitz constant of $F$ in equation \eqref{lipschitz-condition}  and the problem \eqref{problem2} has unique solution $u$ such that $u \in C([0,T];H^{\gamma}(\Omega))$.
	We construct another  regularized solution $\widehat U_{M_n,n}$  which is defined by the following nonlinear integral
	\begin{align}
\widehat	U_{M_n,n}(x,t)&=  \Phi_{M_n,n}(\widetilde u_T )(x,t)-\sum_{p=0}^{M_n}   \Bigg[   \int_t^T e^{(s-t)p^{2\beta}}  F_p(\widehat U_{M_n,n})(s)ds \Bigg]\phi_p(x)\nonumber\\
	&+	\sum_{p=M_n+1}^{\infty}   \Bigg[   \int_0^t e^{(s-t)p^{2\beta}}  F_p(\widehat U_{M_n,n})(s)ds \Bigg]\phi_p(x). \label{resol2}
	\end{align}
Moreover, we have the following estimate
\begin{align}
\mathbb{\bf E} \|\widehat U_{M_n,n}(\cdot,t)-u(\cdot,t)\|^2_{L^2(\Omega)}\le e^{-2M_n^{2\beta} t} \frac{ \Big(5\pi^2 V_{\max}^2 +C_1^2(\beta, \widetilde E_1)\Big) \frac{(M_n+1) e^{2T M_n^{2\beta}}}{n}
	+5M_n^{-2\beta \gamma}  \|u(0)\|^2_{H^{\gamma}(\Omega)} }{1-5kT}.
\end{align}
\end{theorem}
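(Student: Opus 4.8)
The plan is to mirror the two-part structure of the proof of Theorem \ref{main-theorem-1}, but to replace the concluding Gronwall argument by a direct absorption that produces the denominator $1-5KT$.

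\textbf{Part 1: well-posedness of the defining integral equation.} I would first show that the nonlinear map $\mathbb{H}$ sending $w\in C([0,T];L^2(\Omega))$ to the right-hand side of the equation defining $\widehat U_{M_n,n}$ is a self-map of $C([0,T];L^2(\Omega))$ and that some iterate $\mathbb{H}^{m}$ is a contraction, exactly as in Part 1 of Theorem \ref{main-theorem-1}. The only genuinely new point is the infinite tail $\sum_{p>M_n}\bigl[\int_0^t e^{(s-t)p^{2\beta}}F_p(w)(s)\,ds\bigr]\phi_p$: because $F\in L^\infty$ and, for $p>M_n$ and $s\le t$, the kernel satisfies $e^{(s-t)p^{2\beta}}\le 1$, this series is dominated in $L^2(\Omega)$ by $T^2\|F(w)\|_{L^2(\Omega)}^2$, so $\mathbb{H}$ is well defined; the Lipschitz bound \eqref{lipschitz-condition} then yields the same factorial decay as before and a unique fixed point $\widehat U_{M_n,n}$.

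\textbf{Part 2: the error decomposition.} Subtracting the representation of $u$ in Lemma \ref{lemma-solution-rep2} from the definition of $\widehat U_{M_n,n}$ leaves exactly five terms: the sampling error $D_1=\Phi_{M_n,n}(\widetilde u_T)-\Phi_{M_n,n}(u_T)$, the aliasing term $D_2=\sum_{p=0}^{M_n}e^{(T-t)p^{2\beta}}\widetilde G_{np}\phi_p$, the two Lipschitz terms $D_3=-\sum_{p=0}^{M_n}\bigl[\int_t^T e^{(s-t)p^{2\beta}}(F_p(\widehat U_{M_n,n})-F_p(u))\,ds\bigr]\phi_p$ and $D_4=\sum_{p>M_n}\bigl[\int_0^t e^{(s-t)p^{2\beta}}(F_p(\widehat U_{M_n,n})-F_p(u))\,ds\bigr]\phi_p$, and the truncated initial tail $D_5=-\sum_{p>M_n}e^{-tp^{2\beta}}u_p(0)\phi_p$. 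Using $\|\sum_{i=1}^5 D_i\|^2\le 5\sum_{i=1}^5\|D_i\|^2$, I would reuse Steps 1 and 3 of Theorem \ref{main-theorem-1} verbatim to get $\mathbb{E}\|D_1\|^2\le \pi^2 V_{\max}^2(M_n+1)e^{2(T-t)M_n^{2\beta}}/n$ and $\|D_2\|^2\le C_1^2(\beta,\widetilde E_1)(M_n+1)e^{2(T-t)M_n^{2\beta}}/n^{4\beta}$. The new term $D_5$ is where the weakened hypothesis enters: since $e^{-2tp^{2\beta}}\le e^{-2tM_n^{2\beta}}$ for $p>M_n$, the $H^{\gamma}(\Omega)$-regularity of $u(0)$ bounds the tail by $\|D_5\|^2\le e^{-2M_n^{2\beta}t}\sum_{p>M_n}u_p(0)^2\le M_n^{-2\beta\gamma}e^{-2M_n^{2\beta}t}\|u(0)\|_{H^{\gamma}(\Omega)}^2$, which is precisely the estimate that replaces the costly assumptions \eqref{ass1}--\eqref{ass2}.

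\textbf{Main obstacle: the Lipschitz terms and the absorption.} The heart of the argument is to bound $D_3$ and $D_4$ jointly after multiplying through by $e^{2M_n^{2\beta}t}$. The key observation is that on each regime the weighted kernel collapses to a single uniform weight: for $D_3$ one has $p\le M_n$ and $s\ge t$, while for $D_4$ one has $p>M_n$ and $s\le t$, and in both cases $e^{2M_n^{2\beta}t}e^{2(s-t)p^{2\beta}}\le e^{2sM_n^{2\beta}}$ — the crucial point being that for the high modes the backward kernel $e^{(s-t)p^{2\beta}}$ with $s\le t$ is a contraction, so no amplification occurs. Applying the Cauchy--Schwarz inequality in $s$, the Lipschitz property \eqref{lipschitz-condition} and Parseval exactly as in Step 5 of Theorem \ref{main-theorem-1}, the two time integrals $\int_t^T$ and $\int_0^t$ combine into a single $\int_0^T$, giving $e^{2M_n^{2\beta}t}\mathbb{E}(\|D_3\|^2+\|D_4\|^2)\le KT\,\Psi$, where $\Psi:=\sup_{0\le s\le T}e^{2M_n^{2\beta}s}\mathbb{E}\|\widehat U_{M_n,n}(\cdot,s)-u(\cdot,s)\|_{L^2(\Omega)}^2$. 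Collecting the five contributions (each with its factor $5$), noting that the weights $e^{-2M_n^{2\beta}t}$ in $D_1,D_2,D_5$ cancel the overall factor $e^{2M_n^{2\beta}t}$ so those bounds become $t$-independent, and taking the supremum over $t$, I obtain $\Psi\le(5\pi^2V_{\max}^2+C_1^2(\beta,\widetilde E_1))(M_n+1)e^{2TM_n^{2\beta}}/n+5M_n^{-2\beta\gamma}\|u(0)\|_{H^{\gamma}(\Omega)}^2+5KT\,\Psi$. Since $5KT<1$ by hypothesis, I would move the last term to the left, divide by $1-5KT$, and finally multiply back by $e^{-2M_n^{2\beta}t}$ to recover the stated bound. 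The delicate bookkeeping is tracking the sign of $s-t$ in the two regimes, since it is exactly the forward-from-zero integration on the tail that both keeps $D_4$ finite and allows the whole scheme to dispense with \eqref{ass1}--\eqref{ass2}.
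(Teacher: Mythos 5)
Your Part 2 is essentially the paper's Part B: the same five-term decomposition ($D_1,\dots,D_5$ are exactly the paper's $B_{1},B_{3},B_{7},B_{8},B_{9}$), the same individual bounds, and the same device of multiplying by $e^{2M_n^{2\beta}t}$, taking the supremum $\Psi$ (the paper's $\widetilde A$), and absorbing $5KT\,\Psi$ using $5KT<1$. That part is fine.

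The genuine gap is in your Part 1. You claim that adding the tail term $\sum_{p>M_n}\bigl[\int_0^t e^{(s-t)p^{2\beta}}F_p(w)\,ds\bigr]\phi_p$ still ``yields the same factorial decay as before,'' so that some iterate $\mathbb{H}^m$ is a contraction in the plain sup norm. This is false. The factorial gain $1/m!$ in Theorem \ref{main-theorem-1} comes from the Volterra (one-directional) structure of the single integral $\int_t^T$; once you also integrate forward from $0$ on the high modes, the composed operator is no longer of Volterra type. Schematically, if $\mathcal{L}g(t)=A\int_t^T g+B\int_0^t g$, then for $A=B$ one has $\mathcal{L}g=A\int_0^T g$ and $\mathcal{L}^m 1=A^mT^m$ exactly --- geometric, not factorial, decay. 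In your setting the constant attached to the low-mode part is of size $K^2T e^{2TM_n^{2\beta}}$ in the unweighted norm, and the alternating words in the $m$-fold composition contribute terms of order $\bigl(K^2T^2\bigr)^m e^{TM_n^{2\beta}m}$ with no compensating $1/m!$; these blow up as $m\to\infty$ for large $M_n$, so no iterate is a contraction and the fixed point is not obtained this way.

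The paper's Part A avoids this precisely by working in the weighted Bielecki norm $\|f\|_1=\sup_{0\le t\le T}e^{tM_n^{2\beta}}\|f(t)\|$: the weight absorbs the factor $e^{2TM_n^{2\beta}}$ coming from the low modes (estimate \eqref{es1}) while the high-mode forward integral is handled by \eqref{es2}, and the two contributions are combined via the inequality $(a+b)^2\le(1+\theta)a^2+(1+\tfrac{1}{\theta})b^2$ with $\theta=\frac{T-t}{t}$ to give $\|\mathcal{J}(w_1)-\mathcal{J}(w_2)\|_1\le KT\|w_1-w_2\|_1$. Thus $\mathcal{J}$ is a one-step contraction because $KT<1$ (a consequence of $5KT<1$), and this is where the smallness hypothesis is already needed for existence, not only for the final absorption. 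You should replace your iterate argument by this weighted-norm contraction; the rest of your proposal then goes through.
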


\begin{proof}
{\bf Part A.}  The integral equation \eqref{resol2} has unique solution in $ C([0,T];L^2(\Omega)).$
Let us define on $ C([0, T];H) $ the following Bielecki norm
	\begin{equation*}
	\|f\|_1= \sup_{0 \le t \le T} e^{t M_n} \|f(t)\|,~~\text{for all}~~f \in C([0, T];L^2(\Omega)) .
	\end{equation*}
	It is easy to show that $\|.\|_1$ is a norm of $C([0, T];L^2(\Omega)) $.
	For $w \in C([0, T];L^2(\Omega)),$ we consider the following functional
	\begin{align}
\mathcal{	J}(w)(t) &=\Phi_{M_n,n}(\widetilde u_T )(x,t)-\sum_{p=0}^{M_n}   \Bigg[   \int_t^T e^{(s-t)p^{2\beta}}  F_p(w)(s)ds \Bigg]\phi_p(x)\nonumber\\
	&+	\sum_{p=M_n+1}^{\infty}   \Bigg[   \int_0^t e^{(s-t)p^{2\beta}}  F_p(w)(s)ds \Bigg]\phi_p(x), \label{eq55}
	\end{align}
	where $F_p (w) (s) =  \big \langle F(s,w(s)), \phi_p \big \rangle $.  We shall prove that, for every $w_1,w_2 \in C([0, T];L^2(\Omega))$,
	\bes
	\|\mathcal{	J}(w_1)-\mathcal{	J}(w_2)\|_1 \le KT \|w_1-w_2\|_1.  \label{J}
	\ens	
	First, by using  H\"older's inequality and Lipschitz condition of $F$, we have the following estimates for all $t\in [0,T]$
	\begin{align}
&\sum_{p=0}^{M_n}  \Bigg( \int_{t}^{T}e^{(s-t) p^{2\beta} }
	\Big[F_{p}(w_1)(s)- F_p(w_2)(s) \Big] ds \Bigg)^2\nn\\
	& \quad \quad \quad \quad\le (T-t) \sum_{p=0}^{M_n}  \int_{t}^{T} \Big| e^{(s-t) p^{2\beta}  }
	\Big[ F_{p}(w_1)(s)- F_p(w_2)(s) \Big] \Big|^2ds  \nn\\
	& \quad \quad \quad \quad\le (T-t) \sum_{p=0}^{M_n}   \int_t^T  e^{2(s-t)M_n^{2\beta}} \Big|F_{p}(w_1)(s)- F_p(w_2)(s) \Big|^2ds\nn\\
	& \quad \quad \quad \quad\le K^2 (T-t) \int_t^T e^{2(s-t)M_n^{2\beta}} \|w_1(s)-w_2(s)\|^2ds\nn\\
	& \quad \quad \quad \quad \le e^{-2tM_n^{2\beta}}  K^2 (T-t)^2 \sup_{0\le s \le T} e^{2sM_n^{2\beta}} \|w_1(s)-w_2(s)\|^2 \nn\\
	& \quad \quad \quad \quad=e^{-2tM_n^{2\beta}}  K^2 (T-t)^2 \|w_1-w_2\|_1^2, \label{es1}
	\end{align}
	and
	\begin{align}
&	\sum_{p=M_n+1}^{\infty}  \Bigg( \int_{0}^{t} e^{(s-t) M_n^{2\beta} }
	\Big[ F_{p}(w_1)(s)- F_p(w_2)(s) \Big] ds \Bigg)^2\nn\\
	&\quad \quad \quad \quad \le   t  	\sum_{p=M_n+1}^{\infty}  \int_{0}^{t} \Big| e^{(s-t) M_n^{2\beta} }
	\Big[F_{p}(w_1)(s)- F_p(w_2)(s)\Big] \Big|^2ds  \nn\\
	& \quad \quad \quad \quad  \le t \sum_{p=M_n+1}^{\infty}  \int_0^t  e^{2(s-t)M_n^{2\beta} } \big|F_{p}(w_1)(s)- F_p(w_2)(s) \big|^2ds\nn\\
	& \quad \quad \quad \quad\le  K^2 t \int_0^t e^{2(s-t)M_n^{2\beta} }  \|w_1(s)-w_2(s)\|^2ds\nn\\
	& \quad \quad \quad \quad\le   e^{-2tM_n^{2\beta} }  K^2 t^2 \sup_{0\le s \le T} e^{2sM_n^{2\beta} } \|w_1(s)-w_2(s)\|^2 \nn\\
	&\quad \quad \quad \quad =  e^{-2tM_n^{2\beta} }  K^2 t^2 \|w_1-w_2\|_1^2. \label{es2}
	\end{align}
	From the definition of $\mathcal J$ in \eqref{eq55}, we have
	\begin{align}
	\mathcal{J}(w_1)(t)- \mathcal{J}(w_2)(t) & = \sum_{p=0}^{M_n} \Bigg( - \int_{t}^{T}e^{(s-t) M_n^{2\beta}  }
	\Big[F_{p}(w_1)(s)- F_p(w_2)(s) \Big] ds \Bigg) \phi_p(x)\nn\\
	&+ \sum_{p=M_n+1}^{\infty}  \Bigg( \int_{0}^{t} e^{(s-t) M_n^{2\beta}  }
	\Big[ F_{p}(w_1)(s)- F_p(w_2)(s) \Big] ds \Bigg) \phi_p(x). \label{es3}
	\end{align}
	Combining \eqref{es1}, \eqref{es2}, \eqref{es3} and using the inequality $(a + b)^2 \le (1 + \theta)a^2 + \left(1 + \frac{1}{\theta} \right)b^2$
	for
	any real numbers $a, b$ and $\theta>0$, we get the following estimate for all $t \in (0,T)$
	\begin{align}
	\|	\mathcal{J}(w_1)(.,t)- \mathcal{J}(w_2)(.,t)\|^2
	&\le e^{-2tM_n^{2\beta}}   K^2 (1+\theta)t^2 \|w_1-w_2\|_1^2 \nn \\
	& + e^{-2tM_n^{2\beta}}   K^2\left(1+\frac{1}{\theta}\right)(T-t)^2 \|w_1-w_2\|_1^2.
	\end{align}
	By choosing $\theta=\frac{T-t}{t}$, we obtain
	\begin{align}
	e^{2tM_n^{2\beta}}	\|\mathcal J(w_1)(t)- \mathcal J(w_2)(t) \|^2 \le  K^2 T^2 \|w_1-w_2\|^2_1,~~\text{for all} ~~t \in (0,T).  \label{j1}
	\end{align}
	On other hand,  letting $t=T$ in \eqref{es2}, we deduce
	\begin{align}
	e^{2TM_n^{2\beta}} \|\mathcal J(w_1)(T)-\mathcal  J(w_2)(T) \|^2  \le K^2 T^2 \|w_1-w_2\|^2_1. \label{j2}
	\end{align}
	By letting $t=0$ in \eqref{es1}, we have
	\begin{align}
	\|\mathcal J(w_1)(0)-\mathcal  J(w_2)(0) \|^2 \le K^2 T^2 \|w_1-w_2\|^2_1. \label{j3}
	\end{align}
	Combining \eqref{j1}, \eqref{j2} and \eqref{j3}, we obtain
	\begin{align}
	e^{tM_n^{2\beta}} 	\|\mathcal J(w_1)(t)-\mathcal J(w_2)(t) \| \le KT \|w_1-w_2\|_1,~~0\le t \le T, \nn
	\end{align}
	which leads to \eqref{J}. Since $KT<1$, we can conclude that $\mathcal J$ is a contraction; by the Banach fixed point theorem, it follows that the equation $\mathcal J(w) = w$ has a
	unique solution $\widehat U_{M_n,n} \in C([0, T];L^2(\Omega))$.
\end{proof}

{\bf Part B.} Estimate $\mathbb{\bf E} \| \widehat U_{M_n,n}(\cdot,t)-u(\cdot,t)\|^2_{L^2(\Omega)}$.
We have
\begin{align}
 \widehat U_{M_n,n}(x,t)-u(x,t)& =\underbrace{\Phi_{M_n,n}( \widetilde u_T)(x,t) - \Phi_{M_n,n}( u_T)(x,t)}_{B_{1,M_n,n}(x,t) }+\underbrace{\sum_{p=0}^{M_n}  e^{(T-t)p^{2\beta}}  \widetilde {G}_{np} \phi_p(x)}_{B_{3,M,n}(x,t)}\nonumber\\
& -\underbrace{\sum_{p=0}^{M_n}   \Bigg[   \int_t^T e^{(s-t)p^{2\beta}}  \Big( F_p(\widehat U_{M_n,n})-F_p(u)(s)\Big)ds \Bigg]\phi_p(x)}_{{B_{7,M,n}(x,t)}}\nonumber\\
& +\underbrace{\sum_{p=M_n+1}^{\infty}   \Bigg[   \int_0^t e^{(s-t)p^{2\beta}}  \Big( F_p(\widehat U_{M_n,n})-F_p(u)(s)\Big)ds \Bigg]\phi_p(x)}_{{B_{8,M_n,n}(x,t)}}\nn\\
&-\underbrace{\sum_{p=M_n+1}^\infty  e^{-tp^{2\beta}}  u_p(0)\phi_p(x)}_{{B_{9,M_n,n}(x,t)} }.  \label{80}
\end{align}
This implies that
\begin{align}
\mathbb{\bf E} \| \widehat U_{M_n,n}(\cdot,t)-u(\cdot,t)\|^2_{L^2(\Omega)} &\le 5 \mathbb{\bf E} \| B_{1,M_n,n}(\cdot,t)\|^2_{L^2(\Omega)}+5\| B_{3,M_n,n}(\cdot,t)\|^2_{L^2(\Omega)}\nn\\
&+5 \mathbb{\bf E} \| B_{7,M_n,n}(\cdot,t)\|^2_{L^2(\Omega)}+5\mathbb{\bf E}\| B_{8,M_n,n}(\cdot,t)\|^2_{L^2(\Omega)}\nn\\
&+5\| B_{9,M_n,n}(\cdot,t)\|^2_{L^2(\Omega)}. \label{81}
\end{align}
Using H\"{o}lder's inequality and Lipschitz condition of $F$, we have
\begin{align}
\|B_{7,M_n,n}(\cdot, t)\|^2_{L^2(\Omega)}&= \sum_{p=0}^{M_n}   \Bigg[   \int_t^T e^{(s-t)p^{2\beta}}  \Big( F_p(\widehat U_{M_n,n})-F_p(u)(s)\Big)ds \Bigg]^2 \nonumber\\
& \le \sum_{p=0}^{M_n} \int_t^T e^{2(s-t)p^{2\beta}} \Big( F_p(\widehat U_{M_n,n})-F_p(u)(s)\Big)^2 ds \nonumber\\
&\le K \int_t^T e^{2(s-t) M_n^{2\beta}} \|\widehat U_{M_n,n}(.,s)-u(.,s)\|^2_{L^2(\Omega)}ds.  \label{82}
\end{align}
The term $\mathbb{\bf E}\|B_{7,M_n,n}(\cdot, t)\|^2_{L^2(\Omega)}$ is bounded by
\begin{equation}
\mathbb{\bf E}\|B_{7,M_n,n}(\cdot, t)\|^2_{L^2(\Omega)} \le  K \int_t^T e^{2(s-t)M_n^{2\beta}} \mathbb{\bf E} \|\widehat U_{M_n,n}(.,s)-u(.,s)\|^2_{L^2(\Omega)} ds. \label{83}
\end{equation}
By a similar way as above, we estimate  $\mathbb{\bf E}\|B_{8,M_n,n}(\cdot, t)\|^2_{L^2(\Omega)}$ as follows
\begin{equation}
\mathbb{\bf E}\|B_{8,M_n,n}(\cdot, t)\|^2_{L^2(\Omega)} \le  K \int_0^t e^{2(s-t)M_n^{2\beta}} \mathbb{\bf E} \|\widehat U_{M_n,n}(.,s)-u(.,s)\|^2_{L^2(\Omega)} ds. \label{84}
\end{equation}
Finally, we bound $\|B_{9,M_n,n}(\cdot, t)\|^2_{L^2(\Omega)}$ by
\begin{align}
\|B_{9,M_n,n}(\cdot, t)\|^2_{L^2(\Omega)}&= \sum_{p=M_n+1}^\infty e^{-tp^{2\beta}}  p^{-2 \gamma} \Big( p^{\gamma} u_p(0)\Big)^2\nn\\ &\le e^{-2t M_n^{2\beta}} M_n^{-2\beta \gamma} \sum_{p=M_n+1}^\infty \Big(p^{\gamma}  u_p(0)\Big)^2\nn\\
& \le M_n^{-2\beta \gamma} e^{-2t M_n^{2\beta}}  \|u(0)\|^2_{H^{\gamma}(\Omega)}. \label{85}
\end{align}
We estimate the  bounds for the other terms as in the proof of Theorem \ref{main-theorem-1}, hence we can  conclude that
\begin{align*}
\mathbb{\bf E} \| \widehat U_{M_n,n}(\cdot,t)-u(\cdot,t)\|^2_{L^2(\Omega)} &\le \Big(5\pi^2 V_{\max}^2 +C_1^2(\beta, \widetilde E_1)\Big) \frac{(M_n+1) e^{2(T-t) M_n^{2\beta}}}{n}\nn\\
&+5M_n^{-2\beta \gamma} e^{-2t M_n^{2\beta}}  \|u(0)\|^2_{H^{\gamma}(\Omega)}\nn\\
&+ 5K \int_0^T e^{2(s-t)M_n^{2\beta}} \mathbb{\bf E} \|\widehat U_{M_n,n}(.,s)-u(.,s)\|^2_{L^2(\Omega)} ds,
\end{align*}
where we combined \eqref{80}, \eqref{81}, \eqref{82},\eqref{83}, \eqref{84}, \eqref{85}.
Multiplying the latter inequality with $e^{2M_n^{2\beta} t}$ , we obtain
\begin{align*}
e^{2M_n^{2\beta} t}\mathbb{\bf E} \| \widehat U_{M_n,n}(\cdot, t)-u(\cdot,t)\|^2_{L^2(\Omega)} &\le \Big(5\pi^2 V_{\max}^2 +C_1^2(\beta, \widetilde E_1)\Big) \frac{(M_n+1) e^{2T M_n^{2\beta}}}{n}\nn\\
&+5M_n^{-2\beta \gamma}  \|u(0)\|^2_{H^{\gamma}(\Omega)}\nn\\
&+ 5K \int_0^T e^{2sM_n^{2\beta}} \mathbb{\bf E} \|\widehat U_{M_n,n}(.,s)-u(.,s)\|^2_{L^2(\Omega)} ds.
\end{align*}
Since $\widehat U_{M_n,n},~ u \in C([0, T];L^2(\Omega))$ we obtain that the function $e^{2M_n^{2\beta} t}\mathbb{\bf E} \| \widehat U_{M_n,n}(\cdot,t)-u(\cdot,t)\|^2_{L^2(\Omega)}$ is continuous on $ [0, T ]$. Therefore, there
exists a positive $$\widetilde A= \sup_{0 \le t \le T} e^{2M_n^{2\beta} t}\mathbb{\bf E} \|\widehat  U_{M_n,n}(\cdot,t)-u(\cdot,t)\|^2_{L^2(\Omega)}.$$ This implies that
\begin{equation*}
\widetilde A \le \Big(5\pi^2 V_{\max}^2 +C_1^2(\beta, \widetilde E_1)\Big) \frac{(M_n+1) e^{2T M_n^{2\beta}}}{n}
+5M_n^{-2\beta \gamma}  \|u(\cdot, 0)\|^2_{H^{\gamma}(\Omega)}+ 5kT \widetilde A.
\end{equation*}
Hence
\begin{align*}
&e^{2M_n^{2\beta} t}\mathbb{\bf E} \| \widehat U_{M_n,n}(\cdot,t)-u(\cdot,t)\|^2_{L^2(\Omega)} \nn\\
& \le \widetilde A
\le \frac{ \Big(5\pi^2 V_{\max}^2 +C_1^2(\beta, \widetilde E_1)\Big) \frac{(M_n+1) e^{2T M_n^{2\beta}}}{n}
	+5M_n^{-2\beta \gamma}  \|u(\cdot, 0)\|^2_{H^{\gamma}(\Omega)} }{1-5kT}.
\end{align*}

\section{Space fractional diffusion equation with randomly  perturbed time dependent coefficients}\label{section3}

\subsection{Problem setting and regularization method}

In this section, we consider the inverse problem for   space fractional diffusion equation with perturbed time dependent coefficients
\begin{equation}
\label{problem444}
\left\{\begin{array}{l l l}
u_t + a(t) (-\Delta )^\beta u & = F(u(x,t))+g(x,t), & \qquad (x,t) \in \Omega\times (0,T),\\
u_x(x,t)&=0, & \qquad x \in \partial {\Omega},\\
u(x,T) & = u_T(x), & \qquad x \in {\Omega},
\end{array}
\right.
\end{equation}
where $0<a(t) < a_0$ for some  positive number $a_0$. Here,
the source function
$F: \mathbb{R} \to
\mathbb{R}$ a locally Lipschitz function that satisfies:  for each $Q>0$ and
for any $u$, $v$ satisfying $|u|, |v| \leqslant Q$, there holds
\begin{equation}\label{dkff}
\left| {F(u) - F(v)} \right|
\leqslant K_F(Q)\left| {u - v} \right|,
\end{equation}
where
\[
K(Q): =
\sup \left\{ {\left| {\frac{{F(u) - F(v)}}{{u - v}}}
	\right|:\left| u \right|,\left| v \right|
	\leqslant Q,u \ne v}
\right\} <  + \infty.
\]
We note that the function $Q\to K(Q)$ is increasing and $\mathop {\lim }\limits_{Q \to
	+ \infty } K(Q) =  + \infty $. Next, for  the ease of  the reader, we describe
our regularized method and analysis.

First, we approximate $u(x,T)$ and  $g(x,t)$   by
\begin{equation}
\overline  w_{M_n,n}(x)= \dfrac{1}{n}\sum_{k=1}^n \widetilde u_T(x_k)+ \sum_{p=1}^{M_n} \Bigg[\dfrac{\pi}{n} \sum_{k=1}^n \widetilde u_T(x_k) \phi_p(x_k) \Bigg] \phi_p(x). \label{wMn}
\end{equation}
and
\begin{equation}
\overline g_{M_n,n}(x,t)= \dfrac{1}{n}\sum_{k=1}^n \widetilde g(x_k,t)+\sum_{p=1}^{M_n} \Big[   \dfrac{\pi}{n} \sum_{k=1}^n \widetilde g(x_k,t) \phi_p(x_k) \Big] \phi_p(x), \label{gMn}
\end{equation}
respectively.
Second,  we approximate $F$ by $\overline F_Q$ defined by
\[
\overline F_Q\left(u(x,t) \right)
=
\begin{cases}
F(Q), &\quad u(x,t) >Q,\\
F(u(x,t)), &\quad - Q \le u(x,t)\le Q, \\
F(-Q),&\quad u(x,t) < - Q.
\end{cases}
\]
for all $Q>0$.
Since $K$ is increasing function in $[0, +\infty)$, we choose a sequence $\{Q_n \}$ satisfying   $Q_n \to +\infty $ as $n \to +\infty$.  Using Lemma 1.1 of the paper \cite{Tuan}, we also have  the Lipschitz continuity of  the function $\overline F_Q$ from  the following lemma
	
\begin{lemma}[\cite{Tuan}]\label{F-Q-lipshitz}
			For $v_1. v_2 \in L^2(\Omega)$, we have
			\begin{equation}
			\|\overline F_{Q_n} (v_1)-\overline F_{Q_n}(v_2) \|_{L^2(\Omega)} \le 2K(Q_n) \|v_1-v_2\|_{L^2(\Omega)}.
			\end{equation}	
		\end{lemma}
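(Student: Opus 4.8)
The plan is to reduce the $L^2$ estimate to a pointwise Lipschitz bound on the real line and then integrate. Since $\overline F_{Q_n}$ acts on functions by composition, $\overline F_{Q_n}(v_i)(x) = \overline F_{Q_n}\big(v_i(x)\big)$, so that
\[
\|\overline F_{Q_n}(v_1) - \overline F_{Q_n}(v_2)\|_{L^2(\Omega)}^2 = \int_\Omega \big| \overline F_{Q_n}(v_1(x)) - \overline F_{Q_n}(v_2(x)) \big|^2\, dx.
\]
Hence it suffices to establish the pointwise inequality $|\overline F_{Q_n}(a) - \overline F_{Q_n}(b)| \le 2K(Q_n)|a - b|$ for all $a, b \in \mathbb{R}$; setting $a = v_1(x)$, $b = v_2(x)$ and integrating over $\Omega$ then yields the claim at once.

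First I would rewrite the truncated nonlinearity as a composition $\overline F_{Q_n} = F \circ T_{Q_n}$, where $T_{Q_n}(s) = \max(-Q_n, \min(Q_n, s))$ denotes the truncation (metric projection) of $s$ onto the interval $[-Q_n, Q_n]$. Two facts drive the argument: the projection $T_{Q_n}$ is nonexpansive, that is $|T_{Q_n}(a) - T_{Q_n}(b)| \le |a - b|$, since projection onto a closed interval is $1$-Lipschitz; and $T_{Q_n}(a), T_{Q_n}(b)$ always lie in $[-Q_n, Q_n]$, which is exactly the regime in which the local Lipschitz hypothesis \eqref{dkff} may be invoked with $Q = Q_n$. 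Combining the two,
\[
|\overline F_{Q_n}(a) - \overline F_{Q_n}(b)| = |F(T_{Q_n}(a)) - F(T_{Q_n}(b))| \le K(Q_n)\,|T_{Q_n}(a) - T_{Q_n}(b)| \le K(Q_n)\,|a - b|,
\]
which is even sharper than the stated bound; the factor $2$ simply provides slack and can alternatively be recovered by a direct case analysis over the nine positions of $(a,b)$ relative to the thresholds $\pm Q_n$.

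The only delicate point is that \eqref{dkff} is purely local: it controls $|F(u) - F(v)|$ only when both arguments satisfy $|u|, |v| \le Q_n$. The entire role of the truncation is to enforce this automatically, so the increment estimate never leaves the region on which $F$ is known to be Lipschitz. I therefore expect no genuine obstacle; the only care needed is the bookkeeping in the boundary cases (for instance $a > Q_n \ge b \ge -Q_n$), where one must check that the projected increment $|T_{Q_n}(a) - T_{Q_n}(b)|$ does not exceed $|a - b|$, which is immediate from the nonexpansiveness of $T_{Q_n}$. Once the pointwise bound is secured, integration over $\Omega$ closes the proof.
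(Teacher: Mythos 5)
Your proof is correct, and it is worth noting that the paper itself offers no proof of this lemma at all: it simply cites Lemma 1.1 of \cite{Tuan}. Your argument is therefore a self-contained replacement rather than a variant of something in the text. The key observation --- that $\overline F_{Q_n}=F\circ T_{Q_n}$ with $T_{Q_n}$ the metric projection onto $[-Q_n,Q_n]$, which is nonexpansive and whose range is exactly the set where the local Lipschitz hypothesis \eqref{dkff} applies with $Q=Q_n$ --- is sound, and the passage from the pointwise bound to the $L^2$ bound by squaring and integrating is immediate since $\Omega$ is bounded and $\overline F_{Q_n}$ is continuous and bounded (so $\overline F_{Q_n}(v_i)\in L^2(\Omega)$ and measurability is not an issue). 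The usual proof of the cited Lemma 1.1 proceeds by a case analysis on the positions of the two arguments relative to $\pm Q_n$ and arrives at the constant $2K(Q_n)$; your projection argument is cleaner and yields the sharper constant $K(Q_n)$, of which the stated $2K(Q_n)$ is a trivial consequence. The only cosmetic caveat is that the lemma as stated in the paper is applied to functions $v_1,v_2$ of $(x,t)$ at a fixed time, but since $\overline F_{Q_n}$ acts pointwise this changes nothing in your argument.
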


Next, since $a(t)$ is noised by $ \overline a(t)$,  using the fractional Laplacian defined by the spectral theorem we have
$$a(t) (-\Delta )^\beta f =a(t) \sum_{p=0}^{\infty} p^{2\beta}  <f, \phi_p> \phi_p(x), ~f \in L^2(\Omega). $$
  We approximate the operator $ a(t) (-\Delta )^\beta$ by  regularized operator $ \overline a(t) (-\Delta )^\beta- \overline  a_0 \mathbb{\bf R}_{n,\beta} $. By  the   observations and steps above,
we present  the  following new  regularized problem using the quasi-reversibility method.

Assume that $\overline a(t) < a_0$  for all $t\in [0,T]$.
We study the  following regularized problem { with  Neumann boundary condition}
\begin{equation}
\label{regularizedproblem}
\left\{
\begin{split}
&\frac{\partial W_{M_n,n} }{\partial t} + \overline  a(t) (-\Delta )^\beta W_{M_n,n}-a_0 {\bf \overline R}_{n,\beta}( W_{M_n,n} )\\
 &= \overline F_{Q_n} (W_{M_n,n}(x,t)) +\overline g_{M_n,n} (x,t), \ \   (x,t) \in \Omega\times (0,T),\\
 \frac{\partial W_{M_n,n}(x,t) }{\partial x} &=0,  \qquad x \in \partial {\Omega},\\
 W_{M_n,n}(x,T) & = \overline w_{M_n,n}(x),  \qquad x \in {\Omega},
\end{split}
\right.
\end{equation}
where $	{\bf \overline R}_{n,\beta}$ is defined by
\begin{equation}\label{upper-tail-operator}
{\bf \overline R}_{n,\beta}(v)  = \sum_{p \ge M_n a_0^{-\frac{1}{2\beta}  }}^\infty     p^{2\beta} \big\langle v,\phi_p\big\rangle_{L^2(\Omega)} \phi_p(x),
\end{equation}
for any function $v \in L^2(\Omega)$.



In the remainder of this  section we give  two  results  on  convergence rate  of $W_{M_n,n}$ to $u$. The first result in the next section concerns the error estimate in $L^2(\Omega)$. The second result in subsection \ref{section-H-beta-estimate} concern the error estimate in the higher Sobolev space $H^\beta(\Omega)$.
\subsection{ Error estimate in $L^2(\Omega)$}
\begin{theorem}\label{thm-time-dependent-1}
	Suppose that $M_n$
	satisfies
	\begin{equation}
	\lim_{n\to +\infty}  \frac{(M_n+1)e^{2T M_n^{2\beta}}}{n}\quad \text{bounded},
	\end{equation}
	and $\ep$ satisfies that
	\begin{equation}
	\lim_{\ep \to 0} e^{TM_n^{2\beta} } \ep \quad \text{bounded}. \label{noiselevel}
	\end{equation}	
	Choose $Q_n$  such that
	\begin{equation}
	\lim_{n \to +\infty} e^{2T K(Q_n) } e^{-2tM_n^{2\beta} } =0,~~0<t \le T.
	\end{equation}	
	Then the problem  \eqref{regularizedproblem} has unique solution $W_{M_n,n} \in C([0,T];L^2(\Omega))$.
	
	Furthermore, assume that $g, { u} \in L^\infty\lf([0,T];\tilde V(\Omega)\rt)$, where $\tilde V(\Omega)$ is defined  in \eqref{Vomega}.  Then we have
	\begin{equation}\label{eqn:time-dep-approx}
	\mathbb{\bf E}  \|W_{M_n,n}(\cdot,t)-{u}(\cdot,t)\|^2_{L^2(\Omega)} \le e^{(2K(Q_n) +4)T} e^{-2t M_n^\beta } { \Phi} (n,{ u},g,\delta,\ep) ,
	\end{equation}
	where
	\begin{align}
	&{ \Phi} (n,{ u},g,\delta,\ep)=\left(\pi^2 V_{\max}^2+ \big|\overline C (\delta,u) \big|^2+ \pi^2 T^3 \vartheta^2+T \big|\overline D (\delta,g)\big|^2 \right) \frac{(M_n+1)e^{2T M_n^{2\beta} }}{n}\nn\\
	&+ \ep^2 e^{2T M_n^{2\beta} } T^2 \|{ u}\|^2_{L^\infty [0,T];H^{2\beta}(\Omega))} +  \Big(\|{u}_T\|_{\tilde V(\Omega)}^2+ T \|g\|^2_{L^\infty([0,T]; \widetilde V(\Omega))}+T a_0^2 \|{u}\|_{L^\infty\lf([0,T];\tilde V(\Omega)\rt)}^2 \Big).\nn
	\end{align}
\end{theorem}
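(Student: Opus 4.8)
The plan is to follow the two-part structure of the proofs of Theorems \ref{main-theorem-1} and \ref{main-theorem-2}: first establish existence and uniqueness of $W_{M_n,n}$ via a fixed-point argument, then derive the error bound through a Fourier decomposition closed off by Gronwall's inequality.

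For the existence and uniqueness, I would first project the regularized problem \eqref{regularizedproblem} onto the eigenbasis $\{\phi_p\}$. Writing $w_p(t)=\langle W_{M_n,n}(\cdot,t),\phi_p\rangle$, each mode solves a scalar linear ODE whose coefficient is $\overline a(t)p^{2\beta}$ when $p< M_n a_0^{-1/(2\beta)}$ and $(\overline a(t)-a_0)p^{2\beta}$ when $p\ge M_n a_0^{-1/(2\beta)}$, on account of the cutoff in ${\bf \overline R}_{n,\beta}$ from \eqref{upper-tail-operator}. Solving backward from the terminal data $\overline w_{M_n,n}$ yields an integral (fixed-point) equation for $W_{M_n,n}$. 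The crucial sign observation is that, because $\overline a(t)<a_0$, the propagator $\exp\!\big(\int_t^s(\overline a(\tau)-a_0)p^{2\beta}\,d\tau\big)$ associated with the high modes is bounded by $1$, while the low-mode propagator $\exp\!\big(p^{2\beta}\int_t^s\overline a(\tau)\,d\tau\big)$ is bounded by $e^{TM_n^{2\beta}}$; hence every kernel is uniformly bounded by $e^{TM_n^{2\beta}}$. Combining this with the Lipschitz estimate $\|\overline F_{Q_n}(v_1)-\overline F_{Q_n}(v_2)\|\le 2K(Q_n)\|v_1-v_2\|$ of Lemma \ref{F-Q-lipshitz}, I would show that some iterate of the solution map is a contraction on $C([0,T];L^2(\Omega))$ (equivalently, use a Bielecki norm as in Theorem \ref{main-theorem-2}) and conclude by the Banach fixed point theorem.

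For the error estimate I would write $u$ via its own Fourier representation and subtract it from the fixed-point formula for $W_{M_n,n}$, decomposing $W_{M_n,n}(\cdot,t)-u(\cdot,t)$ into contributions measuring: (a) the additive noise $\sigma_k\epsilon_k$ in $\widetilde u_T$; (b) the aliasing error in representing $u_T$ by its $n$-point quadrature (the $\widetilde G_{np}$ tails, yielding the $\overline C(\delta,u)$ term); (c) the Brownian noise $\vartheta\xi_k$ in $\widetilde g$; (d) the corresponding source aliasing ($\widetilde H_{np}$, yielding $\overline D(\delta,g)$); (e) the perturbation of the coefficient $a$ by $\overline a=a+\ep\overline\xi$ from \eqref{a}; (f) the effect of the frequency cutoff in ${\bf \overline R}_{n,\beta}$ on the tail $p\ge M_n a_0^{-1/(2\beta)}$; and (g) the nonlinear remainder $\overline F_{Q_n}(W_{M_n,n})-F(u)$. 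Terms (a)--(d) are estimated exactly as in Steps 1--4 of Theorem \ref{main-theorem-1}, using mutual independence, $\mathbb{\bf E}\epsilon_k^2=1$, the It\^o isometry $\mathbb{\bf E}\xi_k^2(t)=t$, and the tail bounds \eqref{iq22}--\eqref{iq33}; these produce the first group of terms carrying the factor $(M_n+1)e^{2TM_n^{2\beta}}/n$. The cutoff term (f) is controlled by the regularity of $u_T$, $g$ and $u$ in $\tilde V(\Omega)$ (see \eqref{Vomega}), giving the last parenthetical group, while (g) is handled by the Lipschitz bound $2K(Q_n)$ together with H\"older's inequality, producing an integral term to be absorbed by Gronwall.

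The main obstacle is term (e), the coefficient perturbation. Here I must bound $\mathbb{\bf E}$ of the difference between the regularized propagator $\exp\!\big(p^{2\beta}\int_t^s\overline a(\tau)\,d\tau\big)$ and the exact one $\exp\!\big(p^{2\beta}\int_t^s a(\tau)\,d\tau\big)$; expanding $\overline a-a=\ep\overline\xi$ and using the elementary inequality $|e^X-e^Y|\le \max(e^X,e^Y)\,|X-Y|$ reduces this to controlling $p^{2\beta}\ep\int_t^s\overline\xi(\tau)\,d\tau$ weighted against the modes of $u$, which is precisely why the hypothesis $u\in L^\infty([0,T];H^{2\beta}(\Omega))$ and the scaling constraint \eqref{noiselevel} ($\lim_{\ep\to0}e^{TM_n^{2\beta}}\ep$ bounded) are imposed; this yields the term $\ep^2 e^{2TM_n^{2\beta}}T^2\|u\|^2_{L^\infty([0,T];H^{2\beta}(\Omega))}$. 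Finally, collecting all bounds, multiplying through by $e^{2M_n^{2\beta}t}$, and invoking Gronwall's inequality to absorb the nonlinear integral term (contributing the exponential factor $e^{(2K(Q_n)+4)T}$) gives the stated estimate \eqref{eqn:time-dep-approx}.
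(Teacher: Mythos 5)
Your existence--uniqueness argument is essentially the paper's: the authors also reduce \eqref{regularizedproblem} to a nonlinear integral equation (after the substitutions $\overline b(t)=a_0-\overline a(t)$ and $t\mapsto T-t$) and invoke the same contraction argument as in Part 1 of Theorem \ref{main-theorem-1}, using Lemma \ref{F-Q-lipshitz}. For the error estimate, however, you take a genuinely different route. The paper does \emph{not} subtract two Duhamel representations. It rewrites the exact equation as $u_t+\overline a(t)(-\Delta)^\beta u=F(u)+g+(\overline a-a)(-\Delta)^\beta u$, sets $\mathbf{Y}_{M_n,n}=e^{\rho_n(t-T)}(W_{M_n,n}-u)$, takes the $L^2$ inner product of the resulting equation with $\mathbf{Y}_{M_n,n}$, bounds the five resulting pairings ($\widetilde{\mathcal J}_{1,n},\dots,\widetilde{\mathcal J}_{5,n}$) by Cauchy--Schwarz together with Lemmas \ref{lemma-r-p-bounds} and \ref{lemma3.6}, integrates in time with $\rho_n=M_n^{2\beta}$, takes expectations, and closes with Gronwall. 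The decisive advantage of this energy method is that the random coefficient $\overline a(t)$ never enters an exponential: it appears only linearly, through $(\overline a-a)^2=\epsilon^2|\overline\xi(t)|^2$ multiplying $\|u\|^2_{H^{2\beta}}$, so $\mathbb{E}|\overline\xi(t)|^2=t$ immediately yields the term $\epsilon^2T^2\|u\|^2_{L^\infty([0,T];H^{2\beta}(\Omega))}$.

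The genuine gap in your proposal is precisely your step (e), and it is the difficulty the authors flag in the introduction as their reason for abandoning the Fourier/propagator approach: a mild-solution comparison forces you to control $\mathbb{E}$ of differences of stochastic exponentials $\exp\bigl(p^{2\beta}\int_t^s\overline a(\tau)\,d\tau\bigr)$. Your inequality $|e^X-e^Y|\le\max(e^X,e^Y)|X-Y|$ leaves a product of two \emph{correlated} random factors (the random $\max$ and the random $|X-Y|=\epsilon p^{2\beta}|\int\overline\xi|$), and in the Duhamel formula this product further multiplies $F_p(W_{M_n,n})-F_p(u)$, which is itself random and correlated with $\overline\xi$; taking expectations then requires Cauchy--Schwarz and higher (or exponential) moments of $\int\overline\xi$, after which the Gronwall step no longer closes in the form $\int_t^T e^{2\rho_n(s-T)}\mathbb{E}\|W-u\|^2\,ds$ because the Lipschitz term now carries a random weight rather than the deterministic $e^{(s-t)p^{2\beta}}$ of Theorem \ref{main-theorem-1}. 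You would also need to account for the fact that $u$ evolves under the full operator $a(t)(-\Delta)^\beta$ while $W_{M_n,n}$ evolves under $\overline a(t)(-\Delta)^\beta-a_0\overline{\mathbf{R}}_{n,\beta}$, so the two representations do not share a common kernel to subtract; this is what produces the $a_0\overline{\mathbf{R}}_{n,\beta}u$ defect term ($\widetilde{\mathcal J}_{2,n}$ in the paper), and your item (f) only gestures at it. As written, your sketch does not establish the clean coefficient $\epsilon^2e^{2TM_n^{2\beta}}T^2$ in $\Phi$, whereas the paper's energy identity obtains it in one line.
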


\begin{remark}  \label{mainremark}
	Let us choose $M_n$ as follows
	\begin{align}
	e^{2T M_n^{2\beta} }= n^{\sigma}, \quad 0<\sigma<1.
	\end{align}
	Then we have
	\begin{equation}
	M_n:= \left( \frac{\sigma}{2T} \log(n)  \right)^{\frac{1}{2\beta}}.
	\end{equation}
	
	\noindent Choose $\ep< \overline \theta < n \ep $, and  $0<\sigma< \frac{\log(\frac{\overline \theta}{\ep})}{\log n}  $. Then  $n^\sigma \ep^2 < \overline \theta$.
	Moreover, it is easy to check that $M_n$ satisfies the condition of Theorem \ref{thm-time-dependent-1}.
	We can choose $Q_n$ such that $ e^{2K(Q_n)T}n^{-\frac{\sigma t}{T}} $ bounded as $n \to \infty$.  In particular we can  choose $Q_n$ such that
	\begin{align}
	K(Q_n) \le \frac{1}{2T}  \log \Big( \log(n) \Big). \label{Qnn}
	\end{align}
We note that	the term $\log \Big( \log(n) \Big) \to +\infty$ as $n\to\infty$. The choice of $Q_n$ as in \ref{Qnn} is suitable since we recall that the function $Q\to K(Q)$ is increasing and $\mathop {\lim }\limits_{Q \to
		+ \infty } K(Q) =  + \infty $.\\
	Under the assumptions above we can deduce that the error $	\mathbb{\bf E}  \|W_{M_n,n}(\cdot,t)-{u}(\cdot,t)\|^2_{L^2(\Omega)} $ is of order $\log(n) n^{-\frac{\sigma t}{T}}$ for $t \in (0,T]$.
\end{remark}

{ 
\begin{remark}
	The error in inequality \ref{eqn:time-dep-approx} is not useful for $t=0$. To get an approximation of $u(x,0)$, we give another    Lemma below.
\end{remark}
\begin{lemma}  \label{mainlemma}
Assume that $M_n$ satisfies
\begin{equation}
M_n^\beta > \frac{1}{T} \log(\frac{1}{T}) . \label{asss}
\end{equation}
Then there exists unique $t_n \in (0,T)$ such that
\begin{equation}
e^{-t_n M_n^\beta}= t_n.
\end{equation}	
	Assume that $u$ satisfies that $  \frac{\partial u(x,t)}{\partial t} \in L^\infty(0,T;L^2(\Omega))$. 	Choose $Q_n$  such that
	\begin{equation}
	\lim_{n \to +\infty} \frac{  e^{2T K(Q_n) }} {M_n^{\beta}}   =0. \label{Qn}
	\end{equation}
	 Then we have the following estimate
	 \begin{align}
	 \mathbb{\bf E}  \|W_{M_n,n}(\cdot,t_n)-{u}(\cdot,0)\|^2_{L^2(\Omega)} \le  2 { \Phi} (n,{ u},g,\delta,\ep)  e^{(2K(Q_n) +4)T} \frac{1}{M_n^\beta} +2\frac{1}{M_n^\beta}\Big\|   \frac{\partial u(\cdot,t)}{\partial t}  \Big\|^2_{L^\infty([0,T];L^2(\Omega))}.
	 \end{align}
\end{lemma}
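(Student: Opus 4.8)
The plan is to split the argument into three parts: first solve the transcendental equation defining $t_n$, then extract a sharp smallness bound for $t_n$, and finally run a triangle inequality that trades the failure of Theorem \ref{thm-time-dependent-1} at $t=0$ against the temporal regularity of $u$.

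First I would establish existence and uniqueness of $t_n$. Consider $h(t) = e^{-tM_n^\beta} - t$ on $[0,T]$. Then $h(0) = 1 > 0$, while $h(T) = e^{-TM_n^\beta} - T < 0$ precisely because assumption \eqref{asss} is equivalent (after taking logarithms) to $e^{-TM_n^\beta} < T$. Since $h'(t) = -M_n^\beta e^{-tM_n^\beta} - 1 < 0$, the function $h$ is strictly decreasing, so the intermediate value theorem yields a unique zero $t_n \in (0,T)$, i.e. $e^{-t_n M_n^\beta} = t_n$.

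The key quantitative step, which I expect to be the crux, is the smallness bound $t_n^2 \le M_n^{-\beta}$. Rewriting the defining relation as $t_n e^{t_n M_n^\beta} = 1$ and using the elementary inequality $e^x \ge 1 + x$ gives $1 = t_n e^{t_n M_n^\beta} \ge t_n(1 + t_n M_n^\beta) = t_n + t_n^2 M_n^\beta$, whence $t_n^2 M_n^\beta \le 1$. In particular $e^{-2t_n M_n^\beta} = t_n^2 \le M_n^{-\beta}$; this is exactly the factor that converts the exponential weight $e^{-2tM_n^\beta}$ in Theorem \ref{thm-time-dependent-1} into the algebraic decay $M_n^{-\beta}$ appearing in the statement.

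With $t_n$ fixed, I would then write $\|W_{M_n,n}(\cdot,t_n) - u(\cdot,0)\| \le \|W_{M_n,n}(\cdot,t_n) - u(\cdot,t_n)\| + \|u(\cdot,t_n) - u(\cdot,0)\|$, apply $(a+b)^2 \le 2a^2 + 2b^2$, and take expectations (the second term being deterministic). For the first term, Theorem \ref{thm-time-dependent-1} evaluated at $t=t_n$ together with the smallness bound gives
\begin{equation*}
\mathbb{\bf E}\|W_{M_n,n}(\cdot,t_n) - u(\cdot,t_n)\|^2_{L^2(\Omega)} \le e^{(2K(Q_n)+4)T} e^{-2t_n M_n^\beta}\, \Phi(n,u,g,\delta,\ep) \le e^{(2K(Q_n)+4)T}\, \frac{1}{M_n^\beta}\, \Phi(n,u,g,\delta,\ep).
\end{equation*}
For the temporal term I would use the fundamental theorem of calculus $u(\cdot,t_n) - u(\cdot,0) = \int_0^{t_n} \partial_s u(\cdot,s)\,ds$ together with Minkowski's integral inequality to obtain $\|u(\cdot,t_n) - u(\cdot,0)\| \le t_n \|\partial_t u\|_{L^\infty([0,T];L^2(\Omega))}$, so that $\|u(\cdot,t_n) - u(\cdot,0)\|^2 \le t_n^2 \|\partial_t u\|^2 \le M_n^{-\beta}\|\partial_t u\|^2_{L^\infty([0,T];L^2(\Omega))}$, again invoking $t_n^2 \le M_n^{-\beta}$. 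Summing the two contributions with their factors of $2$ yields precisely the claimed estimate. Here condition \eqref{Qn} is what guarantees that the resulting bound tends to $0$ as $n \to \infty$, while the hypothesis $\partial_t u \in L^\infty(0,T;L^2(\Omega))$ is exactly what is needed to control the temporal discrepancy between $u(\cdot,t_n)$ and $u(\cdot,0)$.
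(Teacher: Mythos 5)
Your proposal is correct and follows essentially the same route as the paper: the monotonicity/intermediate-value argument for $t_n$, the elementary bound $t_n^2 \le M_n^{-\beta}$ (the paper uses $e^m \ge m$ where you use $e^x \ge 1+x$, an immaterial difference), the $2a^2+2b^2$ splitting, Theorem \ref{thm-time-dependent-1} at $t=t_n$, and the Newton--Leibniz bound on $\|u(\cdot,t_n)-u(\cdot,0)\|$. No substantive differences.
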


\begin{remark}
	Suppose that  $Q_n$ satisfies \eqref{Qn}, then  the error $\mathbb{\bf E}  \|W_{M_n,n}(\cdot,t_n)-{u}(\cdot,0)\|^2_{L^2(\Omega)}$ is of order $\frac{  e^{2T K(Q_n) }} {M_n^{\beta}}$.
\end{remark}
	
}
We give a proof of Lemma \ref{mainlemma} by using the estimates in Theorem \ref{thm-time-dependent-1}.

{
\begin{proof}[ \bf Proof of Lemma \ref{mainlemma}]
	First, consider the function $\varphi(z)=	e^{-z M_n^\beta}-z $ for $z \in (0, T)$.	
	Note that $\varphi$ is decreasing function and
	$\varphi(0)= 1$. And
	\begin{equation}
	\varphi(T)= e^{-T M_n^\beta}-T <0
	\end{equation}
	by the assumption \eqref{asss}.
	This implies that the equation $\varphi(z)=0$ has unique solution $z_0 \in (0,T)$. Since $z_0$ depends on $n$, we can denote it by $t_n$. 	
	Using the inequality $e^{m} \ge m$ for $m>0$, we deduce that
	\begin{equation*}
	\frac{1}{t_n}= e^{t_n M_n^\beta} \ge t_n  M_n^\beta.
	\end{equation*}
	Hence
	\begin{equation}
	t_n \le \sqrt{\frac{1}{M_n^\beta}}.
	\end{equation}
	Now, we will consider the error $ \mathbb{\bf E}  \|W_{M_n,n}(\cdot,t_n)-{u}(\cdot,0)\|^2_{L^2(\Omega)}.$	
	First, using the triangle inequality and the inequality $(a_0+a_1)^2 \le 2 a_0^2+ 2a_1^2$ for any positive $a_0, a_1$,  we have
	\begin{align}
	\|W_{M_n,n}(x,t_n)-{u}(x,0)\|^2_{L^2(\Omega)} &\le \Bigg(  \|W_{M_n,n}(\cdot,t_n)-{u}(\cdot,t_n)\|_{L^2(\Omega)}+\|u(\cdot,t_n)-{u}(\cdot,0)\|_{L^2(\Omega)} \Bigg)^2\nn\\
	&\le  2 \|W_{M_n,n}(\cdot,t_n)-{u}(\cdot,t_n)\|^2_{L^2(\Omega)}+2\|u(\cdot,t_n)-{u}(\cdot,0)\|^2_{L^2(\Omega)}. \label{asss11}
	\end{align}	
	Since	\eqref{eqn:time-dep-approx} holds for any $t>0$, we obtain
	\begin{align}
	\mathbb{\bf E}  \|W_{M_n,n}(\cdot,t_n)-{u}(\cdot,t_n)\|^2_{L^2(\Omega)} \le  e^{(2K(Q_n) +4)T} e^{-2t_n M_n^\beta } { \Phi} (n,{ u},g,\delta,\ep). \label{asss12}
	\end{align}
	From the Newton-Leibniz formula, we get
	\begin{align}
	\|u(\cdot,t_n)-{u}(\cdot,0)\|^2_{L^2(\Omega)} &= \Big\| \int_0^{t_n} \frac{\partial u(\cdot, s)}{\partial s} ds \Big\|^2_{L^2(\Omega)} \le \Bigg(\int_0^{t_n} \Big\|  \frac{\partial u(\cdot,s)}{\partial s}  \Big\|_{L^2(\Omega)}ds\Bigg)^2\nn\\
	&\le  t_n^2 \Big\|   \frac{\partial u(\cdot,t)}{\partial t}  \Big\|^2_{L^\infty([0,T];L^2(\Omega))}. \label{asss13}
	\end{align}	
	Combining \eqref{asss11},\eqref{asss12}, \eqref{asss13}, we get
	\begin{align}
	\mathbb{\bf E}  \|W_{M_n,n}(\cdot,t_n)-{u}(\cdot,0)\|^2_{L^2(\Omega)} &\le 2 e^{(2K(Q_n) +4)T} e^{-2t_n M_n^\beta } { \Phi} (n,{ u},g,\delta,\ep)+  2t_n^2 \Big\|   \frac{\partial u(\cdot,t)}{\partial t}  \Big\|^2_{L^\infty([0,T];L^2(\Omega))} \nn\\
	&\le 2 { \Phi} (n,{ u},g,\delta,\ep)  e^{(2K(Q_n) +4)T} \frac{1}{M_n^\beta} +2\frac{1}{M_n^\beta}\Big\|   \frac{\partial u(\cdot,t)}{\partial t}  \Big\|^2_{L^\infty([0,T];L^2(\Omega))}.
	\end{align}
	since we have used to $e^{-t_n M_n^\beta}= t_n \le \sqrt{\frac{1}{M_n^\beta}}$.
\end{proof}
}

	To prove Theorem \ref{thm-time-dependent-1}, first we need some  Lemmas.
	\begin{lemma}
		The problem \eqref{regularizedproblem} has unique solution $ W_{M_n,n} \in C([0,T];L^2(\Omega)).$
	\end{lemma}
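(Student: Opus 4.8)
The plan is to reduce the regularized PDE \eqref{regularizedproblem} to an equivalent nonlinear integral equation in the Fourier basis $\{\phi_p\}$ and then run a fixed-point argument, exactly parallel to Part 1 of the proof of Theorem \ref{main-theorem-1}. Writing $W_{M_n,n}(x,t)=\sum_{p=0}^\infty w_p(t)\phi_p(x)$ and projecting the main equation onto $\phi_p$, the operator $\overline a(t)(-\Delta)^\beta-a_0{\bf\overline R}_{n,\beta}$ acts diagonally, producing the scalar ODE
\begin{equation*}
w_p'(t)+b_p(t)\,w_p(t)=\overline F_{Q_n,p}(W_{M_n,n})(t)+\inpro{\overline g_{M_n,n}(\cdot,t)}{\phi_p},
\end{equation*}
where $b_p(t)=\overline a(t)p^{2\beta}$ for $p<M_n a_0^{-\frac{1}{2\beta}}$ and $b_p(t)=(\overline a(t)-a_0)p^{2\beta}$ for $p\ge M_n a_0^{-\frac{1}{2\beta}}$. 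Using the integrating factor $\exp(\int_0^t b_p)$ together with the terminal value $w_p(T)=\inpro{\overline w_{M_n,n}}{\phi_p}$, each mode is represented as
\begin{equation*}
w_p(t)=e^{\int_t^T b_p(\tau)d\tau}\inpro{\overline w_{M_n,n}}{\phi_p}-\int_t^T e^{\int_t^s b_p(\tau)d\tau}\Big[\overline F_{Q_n,p}(W_{M_n,n})(s)+\inpro{\overline g_{M_n,n}(\cdot,s)}{\phi_p}\Big]ds.
\end{equation*}
This defines a map $\mathbb G$ on $C([0,T];L^2(\Omega))$ whose fixed points are precisely the solutions of \eqref{regularizedproblem}.

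The decisive step, and the place where the quasi-reversibility regularization does its work, is a bound on the exponential weights $e^{\int_t^s b_p(\tau)d\tau}$ for $t\le s\le T$ that is \emph{uniform in} $p$. For the low modes $p<M_n a_0^{-\frac{1}{2\beta}}$ one has $p^{2\beta}<M_n^{2\beta}/a_0$, so $\overline a(\tau)p^{2\beta}<a_0\cdot M_n^{2\beta}/a_0=M_n^{2\beta}$ because $\overline a<a_0$, whence $\int_t^s b_p\le M_n^{2\beta}T$ and $e^{\int_t^s b_p}\le e^{M_n^{2\beta}T}$. For the high modes $p\ge M_n a_0^{-\frac{1}{2\beta}}$ the cutoff built into ${\bf\overline R}_{n,\beta}$ forces $b_p(\tau)=(\overline a(\tau)-a_0)p^{2\beta}<0$, so $\int_t^s b_p\le0$ and $e^{\int_t^s b_p}\le1$. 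Hence $0\le e^{\int_t^s b_p(\tau)d\tau}\le e^{M_n^{2\beta}T}$ for every $p$ and all $t\le s\le T$. This is exactly the property the original (unregularized) backward problem lacks, where the high modes would instead grow like $e^{p^{2\beta}(T-t)}$; it also immediately guarantees that $\mathbb G$ is well defined, since by Parseval and $\overline F_{Q_n}\in L^\infty$ the series defining $\mathbb G(w)(\cdot,t)$ lies in $L^2(\Omega)$ with a bound uniform in $t$, and continuity in $t$ follows by dominated convergence.

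With the uniform weight bound in hand the contraction estimate is routine. For $v,w\in C([0,T];L^2(\Omega))$, Hölder's inequality, Parseval's identity and the Lipschitz bound of Lemma \ref{F-Q-lipshitz} give
\begin{align*}
\|\mathbb G(v)(\cdot,t)-\mathbb G(w)(\cdot,t)\|^2_{L^2(\Omega)}
&\le (T-t)\,e^{2M_n^{2\beta}T}\int_t^T\|\overline F_{Q_n}(v)(\cdot,s)-\overline F_{Q_n}(w)(\cdot,s)\|^2_{L^2(\Omega)}\,ds\\
&\le 4K^2(Q_n)\,T\,e^{2M_n^{2\beta}T}\int_t^T\|v(\cdot,s)-w(\cdot,s)\|^2_{L^2(\Omega)}\,ds.
\end{align*}
Iterating this inequality $m$ times yields a factor $1/m!$, so that
\begin{equation*}
|||\mathbb G^m(v)-\mathbb G^m(w)|||\le\sqrt{\frac{\big(4K^2(Q_n)T\,e^{2M_n^{2\beta}T}\big)^m}{m!}}\;|||v-w|||,
\end{equation*}
where $|||\cdot|||$ is the sup-norm on $C([0,T];L^2(\Omega))$, and the prefactor tends to $0$ as $m\to\infty$. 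Thus $\mathbb G^{m_0}$ is a contraction for some $m_0$, and the same bookkeeping used in Part 1 of Theorem \ref{main-theorem-1} shows that the unique fixed point of $\mathbb G^{m_0}$ is also the unique fixed point of $\mathbb G$, giving the desired unique solution $W_{M_n,n}\in C([0,T];L^2(\Omega))$. The only genuine obstacle is the uniform exponential bound of the second paragraph; once that is secured, both the well-definedness of $\mathbb G$ and the contraction step are entirely standard.
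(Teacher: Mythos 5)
Your proposal is correct and rests on the same core strategy as the paper: convert the terminal-value problem into a nonlinear Fourier-side integral equation and obtain the unique fixed point by Picard iteration with the $1/m!$ factor, exactly as in Part 1 of Theorem \ref{main-theorem-1}. The organization of the linear part differs, though. The paper substitutes $\overline a(t)=a_0-\overline b(t)$, time-reverses via $\widehat W_{M_n,n}(x,t)=W_{M_n,n}(x,T-t)$, keeps only the dissipative operator $\overline b(t)(-\Delta)^\beta$ in the integrating factor (so all exponential weights are $\le 1$), and absorbs the remaining bounded low-mode operator into a globally Lipschitz perturbation $\overline H$ whose Lipschitz constant $M_n^{2\beta}+2K(Q_n)$ carries the regularization parameter. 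You instead keep the full diagonal symbol $b_p(t)$ of $\overline a(t)(-\Delta)^\beta-a_0{\bf \overline R}_{n,\beta}$ in the exponential weights and prove the uniform bound $e^{\int_t^s b_p}\le e^{M_n^{2\beta}T}$ by splitting low modes ($p^{2\beta}<M_n^{2\beta}/a_0$, so $\overline a(\tau)p^{2\beta}<M_n^{2\beta}$) from high modes (where the cutoff makes $b_p<0$). The two bookkeepings are equivalent --- in both cases the quasi-reversibility cutoff is what caps the growth at $e^{M_n^{2\beta}T}$ instead of $e^{p^{2\beta}T}$ --- but your version is arguably cleaner: it makes the damping of the high modes explicit and sidesteps the paper's slightly awkward rewriting, in which the bounded operator that actually appears after the substitution is the low-mode projection ${\bf \overline P}_{n,\beta}$ (bounded by $M_n^{2\beta}$ on $L^2(\Omega)$ by Lemma \ref{lemma-r-p-bounds}) rather than $a_0{\bf \overline R}_{n,\beta}$ as written there. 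The only blemishes in your write-up are cosmetic: the iterated constant should carry an extra factor $T^m$ from $(T-t)^m\le T^m$, which changes nothing since the prefactor still tends to $0$ as $m\to\infty$.
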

	\begin{proof}Let $b(t)= a_0 - a(t)$ and $\overline b(t)=a_0 - \overline a(t)$.
		The equation \eqref{regularizedproblem}  can be transformed to the following equation
		\begin{equation}
		\frac{\partial W_{M_n,n} }{\partial t} - \overline  b(t) (-\Delta )^\beta W_{M_n,n}+a_0 {\bf \overline R}_{n,\beta} W_{M_n,n}  = \overline F_{Q_n} (W_{M_n,n}(x,t)) +\overline g_{M_n,n} (x,t). \label{eq2}
		\end{equation}
		Set $\widehat W_{M_n,n} (x,t)= W_{M_n,n}(x,T-t)$ then we get
		\begin{equation}
		\frac{\partial \widehat  W_{M_n,n} }{\partial t} + \overline  b(t) (-\Delta )^\beta \widehat W_{M_n,n}  = a_0 {\bf \overline R}_{n,\beta} \widehat W_{M_n,n}-\overline F_{Q_n} (\widehat W_{M_n,n}(x,t)) -\overline g_{M_n,n} (x,t), \label{1problem}
		\end{equation}
		and the initial condition
		\begin{equation}
		\widehat	W_{M_n,n}(x,0)  = \overline w_{M_n,n}(x). \label{initial}
		\end{equation}
		For any function $v\in L^2(\Omega)$, set
$$\overline H( v) =a_0 {\bf \overline R}_{n,\beta} v-\overline F_{Q_n} (v(x,t)) -\overline g_{M_n,n} (x,t) .$$
		It is easy to check that $\overline H$  is  globally Lipschitz function. In fact, for $v_1, v_2 \in L^2(\Omega)$,
		we obtain using Lemma \ref{F-Q-lipshitz}
		\begin{align}
		\|\overline H( v_1)-\overline H( v_2) \|_{L^2(\Omega)} &\le  a_0	\|\mathbb{\bf \overline R}_{n,\beta} v_1-\mathbb{\bf \overline R}_{n,\beta} v_2 \|_{L^2(\Omega)}+	\|\overline F_{Q_n} (v_1)-\overline F_{Q_n}(v_2) \|_{L^2(\Omega)} \nn\\
		&\le M_n^{2\beta} \| v_1-V_2\|_{L^2(\Omega)}+ 2K(Q_n) \|v_1-v_2\|_{L^2(\Omega)}.
		\end{align}

		By taking the inner product of \eqref{eq2} with $\phi_p(x)$, we get
		\begin{align}
		\frac{d}{dt} \Big<\widehat	W_{M_n,n}(.,t), \phi_p  \Big> +\overline b
		(t)p^{2\beta} \Big<\widehat	W_{M_n,n}(.,t), \phi_p  \Big>= \Big<\overline H (\widehat	W_{M_n,n}(.,t)) ,\phi_p  \Big>.
		\end{align}
		By multiplying both sides of the latter equality with $\exp\Big( p^{2\beta}\int_0^t \overline b (s)ds \Big)$, and then taking the integral from $0$ and $t$,  we transform the above differential equation into the following nonlinear integral equation
		\begin{align}
		\widehat	W_{M_n,n}(x,t)&= \sum_{p=1}^\infty \exp\Big( -p^{2\beta}\int_0^t \overline b (s)ds \Big) \Big[ \Big<\overline w_{M_n,n}, \phi_p  \Big>  \Big]\phi_p(x)\nn\\
		&+\int_0^t \exp\Big( -p^{2\beta}\int_s^t \overline b (\xi)d\xi \Big) \Big[ \Big<\overline H (\widehat	W_{M_n,n}(.,s)) ,\phi_p  \Big> ds \Big]\phi_p(x). \label{integralequation}
		\end{align}
		Here $\widehat	W_{M_n,n}$  is the mild solution of  \eqref{1problem}-\eqref{initial}.
		The existence of mild solution of  nonlinear integral equation \eqref{integralequation} is proved similarly as in the proof of part 1 of Theorem \ref{main-theorem-1}.
	 \end{proof}

	\begin{lemma}\label{lemma-r-p-bounds}
		Recall the  definition of  $\mathbb{\bf \overline R}_{n,\beta}$ in equation \eqref{upper-tail-operator} and define $ \mathbb{\bf \overline P}_{n,\beta}$
		\begin{align}
		{\bf \overline P}_{n,\beta}(v)  &= a_0 \sum_{p < M_n a_0^{-\frac{1}{2\beta}  }}   p^{2\beta} \big\langle v,\phi_p\big\rangle_{L^2(\Omega)} \phi_p(x),
		\end{align}
		for any function $v \in L^2(\Omega)$. Define the following space of functions
		\begin{equation}
		{	 \widetilde V(\Omega) }:=\Big\{ \theta \in L^2(\Omega):\ \   \sum_{p=1}^\infty   p^{4\beta} e^{2Ta_0^{\beta} p^{2\beta}}  \big\langle \theta,\phi_p\big\rangle_{L^2(\Omega)}^2 <+\infty \Big\}. \label{Vomega}
		\end{equation}
		Then we have
		\begin{equation}
			\|\mathbb{\bf \overline P}_{n,\beta} v\|_{L^2(\Omega)} \le  M_n^{2\beta} \| v\|_{L^2(\Omega)}, \quad \text{for any}\quad v \in L^2(\Omega),
		\end{equation}
		and
		\begin{equation}
		a_0	\|\mathbb{\bf \overline R}_{n,\beta} v\|_{L^2(\Omega)} \le a_0  e^{-TM_n^{2\beta} } \|v\|_{ \widetilde V(\Omega) } \quad \text{for any}\quad v \in  \widetilde V(\Omega) .
		\end{equation}
		\end{lemma}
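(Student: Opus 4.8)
The plan is to reduce both inequalities to Parseval's identity for the orthonormal basis $\{\phi_p\}$, after which each estimate becomes a pointwise comparison of spectral multipliers across the cutoff $p_\ast = M_n a_0^{-1/(2\beta)}$. No PDE input is needed, since both operators are diagonal in this basis; I only have to control the two multipliers $a_0 p^{2\beta}\mathbf 1_{\{p<p_\ast\}}$ and $p^{2\beta}\mathbf 1_{\{p\ge p_\ast\}}$ against the threshold and against the weight that defines $\widetilde V(\Omega)$.

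For the first bound, Parseval gives
\[
\|\mathbb{\bf \overline P}_{n,\beta} v\|_{L^2(\Omega)}^2 = a_0^2 \sum_{p<p_\ast} p^{4\beta}\,\langle v,\phi_p\rangle^2 = \sum_{p<p_\ast} \big(a_0 p^{2\beta}\big)^2\,\langle v,\phi_p\rangle^2 .
\]
On the summation range the cutoff yields $p^{2\beta}<M_n^{2\beta}a_0^{-1}$, i.e. $a_0 p^{2\beta}<M_n^{2\beta}$; substituting this uniform bound and then dropping the restriction $p<p_\ast$ to recover the full Parseval sum $\sum_p\langle v,\phi_p\rangle^2=\|v\|_{L^2(\Omega)}^2$ gives $\|\mathbb{\bf \overline P}_{n,\beta}v\|_{L^2(\Omega)}^2\le M_n^{4\beta}\|v\|_{L^2(\Omega)}^2$. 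Taking square roots yields the claim.

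For the second bound I again use Parseval,
\[
\|\mathbb{\bf \overline R}_{n,\beta} v\|_{L^2(\Omega)}^2 = \sum_{p\ge p_\ast} p^{4\beta}\,\langle v,\phi_p\rangle^2 ,
\]
and insert the identity factor $e^{2Ta_0^{\beta}p^{2\beta}}e^{-2Ta_0^{\beta}p^{2\beta}}=1$ under the sum. The second factor reconstitutes the weight defining $\|\cdot\|_{\widetilde V(\Omega)}$ in \eqref{Vomega}, so it remains only to dominate the first factor by a constant on the tail $p\ge p_\ast$; once I show $e^{-2Ta_0^{\beta}p^{2\beta}}\le e^{-2TM_n^{2\beta}}$ there, I pull this constant out of the sum and bound what is left by the full weighted sum $\|v\|_{\widetilde V(\Omega)}^2$, then multiply by $a_0$ to conclude.

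The crux, and the only place needing care, is precisely this last exponential comparison. The cutoff $p\ge p_\ast$ gives $p^{2\beta}\ge M_n^{2\beta}a_0^{-1}$, hence $a_0 p^{2\beta}\ge M_n^{2\beta}$, which is exactly the inequality that lets me extract $e^{-2TM_n^{2\beta}}$ from the exponential weight. I expect the main obstacle to be purely the bookkeeping of the power of $a_0$: the monotonicity of $p\mapsto p^{2\beta}$ makes the cancellation between the threshold $a_0^{-1/(2\beta)}$ and the exponent in \eqref{Vomega} exact only when these powers are chosen consistently, so I would verify that the $a_0$-exponent in the definition of $\widetilde V(\Omega)$ matches the cutoff before declaring the tail estimate sharp; apart from this verification, everything reduces to Parseval and a single monotonicity estimate.
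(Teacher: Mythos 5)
Your proof is correct and is essentially identical to the paper's own argument: Parseval's identity plus the pointwise bound $a_0 p^{2\beta} \le M_n^{2\beta}$ below the cutoff for the first inequality, and insertion of the factor $e^{2Ta_0 p^{2\beta}}e^{-2Ta_0 p^{2\beta}}$ together with $a_0 p^{2\beta} \ge M_n^{2\beta}$ on the tail for the second. The caveat you raise at the end is well founded: the definition of $\widetilde V(\Omega)$ carries the weight $e^{2Ta_0^{\beta}p^{2\beta}}$, whereas the cutoff $p \ge M_n a_0^{-1/(2\beta)}$ only yields $a_0 p^{2\beta} \ge M_n^{2\beta}$ (not $a_0^{\beta}p^{2\beta}\ge M_n^{2\beta}$), and indeed the paper's own proof silently replaces $a_0^{\beta}$ by $a_0$ in the exponent, so the stated lemma matches its proof only up to this typo in the definition of $\widetilde V(\Omega)$.
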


	\begin{proof}	
		We obtain
		\begin{align}
			\|\mathbb{\bf \overline P}_{n} v\|^2_{L^2(\Omega)}& \le a_0^2  \sum_{p < M_n a_0^{-\frac{1}{2\beta}  }}^\infty    p^{4\beta}  \big\langle v,\phi_p\big\rangle_{L^2(\Omega)}^2 \nn\\
		&\le M_n^{4\beta} \sum_{p < M_n a_0^{-\frac{1}{2\beta}  }}^\infty    \big\langle v,\phi_p\big\rangle_{L^2(\Omega)}^2  \nn\\
		&\le  M_n^{4\beta} \| v\|^2_{L^2(\Omega)}.
		\end{align}
		and
		\begin{align}
		a_0^2 \|\mathbb{\bf \overline R}_{n} v\|^2_{L^2(\Omega)}& \le a_0^2 \sum_{p \ge M_n a_0^{-\frac{1}{2\beta}  }}^\infty    \exp\Big(-2Ta_0 p^{2\beta}\Big) p^{4\beta}\exp\Big(2Ta_0 p^{2\beta}\Big) \big\langle v,\phi_p\big\rangle_{L^2(\Omega)}^2 \nn\\&\le a_0^2 e^{-2TM_n^{2\beta} } \sum_{p=1}^\infty   p^{4\beta} \exp\Big(2Ta_0 p^{2\beta}\Big)  \big\langle v,\phi_p\big\rangle_{L^2(\Omega)}^2 \nn\\
		&\le a_0^2  e^{-2TM_n^{2\beta} } \|v\|^2_{ \widetilde V(\Omega) }.
		\end{align}
	\end{proof}
	\begin{lemma}  \label{lemma3.6}
		Assume that ${ u}_T= u(.,T) \in \tilde V(\Omega)$ and $g \in L^\infty([0,T]; \widetilde V(\Omega)) $. Then the following estimates hold
		\begin{equation}
		\mathbb{\bf E} \|\overline w_{M_n,n}- { u}_T\|^2_{L^2(\Omega)} \le  \left(\pi^2 V_{\max}^2+ \big|\overline C (\delta,u) \big|^2 \right) \frac{M_n+1}{n}+ e^{-2TM_n^{2\beta}}\|{u}_T\|_{\tilde V(\Omega)}^2.
		\end{equation}
		and
		\begin{align}
		\mathbb{\bf E} \|\overline g_{M_n,n}(.,t)- g(.,t)\|^2_{L^2(\Omega)} \le \left( \pi^2 T^2 \vartheta^2+ \big|\overline D (\delta,g)\big|^2 \right)  \frac{M_n+1}{n}+e^{-2TM_n^{2\beta}} \|g\|_{L^\infty([0,T]; \widetilde V(\Omega))}, \label{129}
		\end{align}		
		where
		$$
		\overline C (\delta,u)= \max\Big(\frac{2}{4^\delta}, \frac{\sqrt{2}}{\sqrt{\pi}2^\delta } \Big)  \Big( \sum\limits_{l = 1}^\infty  \frac{1}{l^{\delta}} \Big)\|{ u}_T\|_{H^\delta(\Omega)},
		$$
		and
		$$
		\overline D (\delta,g)= \max\Big(\frac{2}{4^\delta}, \frac{\sqrt{2}}{\sqrt{\pi}2^\delta } \Big) \Big( \sum\limits_{l = 1}^\infty  \frac{1}{l^{\delta}} \Big) \|g\|_{L^\infty([0,T];H^\delta(\Omega))} .
		$$
		for any $\delta>1$.
	\end{lemma}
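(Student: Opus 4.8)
The plan is to prove the two estimates in parallel, since $\overline w_{M_n,n}$ and $\overline g_{M_n,n}(\cdot,t)$ are produced by the very same discrete quadrature rule: the only differences are that the additive noise is $\sigma_k\epsilon_k$ in the first case and $\vartheta\xi_k(t)$ in the second, and that the exact data are $u_T$ and $g(\cdot,t)$, respectively. I would carry out the first inequality in detail and then transcribe it. In each case the error splits into three pieces that are controlled by the same three mechanisms used in Steps~1--4 of the proof of Theorem~\ref{main-theorem-1}: a stochastic (quadrature-noise) piece, a deterministic aliasing piece carried by the $\widetilde G_{np}$ (resp.\ $\widetilde H_{np}$), and a high-frequency truncation tail.

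First I would apply the quadrature identity of Lemma~\ref{cc}. Writing $\widetilde u_T(x_k)=u_T(x_k)+\sigma_k\epsilon_k$ and using $\frac{\pi}{n}\sum_{k}u_T(x_k)\phi_p(x_k)=\langle u_T,\phi_p\rangle+\widetilde G_{np}$ for $1\le p\le n-1$ together with the corresponding $p=0$ identity, I can expand $\overline w_{M_n,n}-u_T$ as a single Fourier series whose coefficient at each frequency $0\le p\le M_n$ is the sum of a mean-zero noise term $N_p$ and the aliasing term $\widetilde G_{np}$, while its coefficient at each frequency $p>M_n$ is simply $-\langle u_T,\phi_p\rangle$. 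The crucial observation is that $\mathbb{E}[N_p]=0$, so that after Parseval's identity and taking expectations the cross terms $2\widetilde G_{np}\mathbb{E}[N_p]$ vanish, leaving the clean decomposition
\[
\mathbb{E}\|\overline w_{M_n,n}-u_T\|^2_{L^2(\Omega)}=\sum_{p=0}^{M_n}\mathbb{E}[N_p^2]+\sum_{p=0}^{M_n}\widetilde G_{np}^2+\sum_{p=M_n+1}^\infty\langle u_T,\phi_p\rangle^2 .
\]
This is what accounts for the absence of any spurious multiplicative constant in the stated bound.

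I would then estimate the three sums in turn. For the stochastic sum, the independence of the $\epsilon_k$ with $\mathbb{E}\epsilon_k^2=1$, the bound $0\le\sigma_k<V_{\max}$, and $\phi_p^2(x_k)\le 1$ give $\mathbb{E}[N_p^2]\le \pi^2 V_{\max}^2/n$ for each $p$, hence a total of at most $\pi^2 V_{\max}^2(M_n+1)/n$. For the aliasing sum I would reproduce Step~3 of the proof of Theorem~\ref{main-theorem-1} to obtain the pointwise bound $|\widetilde G_{np}|\le \overline C(\delta,u)\,n^{-\delta}$; this is exactly where the hypothesis $u_T\in H^\delta(\Omega)$ with $\delta>1$ is used, through the coefficient decay $|\langle u_T,\phi_q\rangle|\le \|u_T\|_{H^\delta(\Omega)}\,q^{-\delta}$ and the convergence of $\sum_{l\ge1}l^{-\delta}$, which controls the aliased frequencies $\pm p+2ln$ and supplies the factors $2/4^\delta$ and $\sqrt2/(\sqrt\pi\,2^\delta)$. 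Since $\delta>1$ forces $n^{2\delta}\ge n$, summing over $0\le p\le M_n$ yields $\sum_p\widetilde G_{np}^2\le |\overline C(\delta,u)|^2(M_n+1)/n$. Finally, for the truncation tail I would use the weight defining $\widetilde V(\Omega)$ in \eqref{Vomega}, which is calibrated precisely so that for $p>M_n$ the factor $p^{4\beta}e^{2Ta_0^{\beta}p^{2\beta}}$ dominates $e^{2TM_n^{2\beta}}$; extracting $e^{-2TM_n^{2\beta}}$ then bounds the tail by $e^{-2TM_n^{2\beta}}\|u_T\|_{\widetilde V(\Omega)}^2$. Adding the three contributions gives the first inequality.

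The second estimate follows verbatim after replacing $\sigma_k\epsilon_k$ by $\vartheta\xi_k(t)$, $\widetilde G_{np}$ by $\widetilde H_{np}(t)$ (via Lemma~\ref{cc2}), and $u_T$ by $g(\cdot,t)$. The only change is in the stochastic term, where one uses the Brownian properties $\mathbb{E}[\xi_i(t)\xi_k(t)]=0$ for $i\ne k$ and $\mathbb{E}\xi_k^2(t)=t\le T$ in place of $\mathbb{E}\epsilon_k^2=1$, producing the factor $\pi^2 T^2\vartheta^2$; the aliasing and tail bounds are identical once the supremum over $t\in[0,T]$ is absorbed into the $L^\infty([0,T];\cdot)$ norms, yielding $\overline D(\delta,g)$ and $\|g\|_{L^\infty([0,T];\widetilde V(\Omega))}$. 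I expect the main technical step to be the aliasing estimate $|\widetilde G_{np}|,\,|\widetilde H_{np}(t)|\le Cn^{-\delta}$, since it requires combining the $H^\delta$ decay with a careful lower bound on the aliased index $-p+2ln$ for $1\le p\le n-1$, together with the observation that the exponential weight in $\widetilde V(\Omega)$ is exactly strong enough to manufacture the gain $e^{-2TM_n^{2\beta}}$ in the tail.
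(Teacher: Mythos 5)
Your proposal is correct and follows essentially the same route as the paper: expand $\overline w_{M_n,n}-u_T$ (resp.\ $\overline g_{M_n,n}-g$) via the quadrature identities of Lemmas \ref{cc} and \ref{cc2}, use Parseval and the vanishing of the cross terms under expectation to split the error into the noise sum, the aliasing sum over $\widetilde G_{np}$ (resp.\ $\widetilde H_{np}$), and the high-frequency tail, then bound these exactly as in Steps 1--4 of the proof of Theorem \ref{main-theorem-1} together with the $\widetilde V(\Omega)$ weight for the tail. The only cosmetic differences are that you make the cancellation $\mathbb{E}[N_p]=0$ explicit where the paper uses it silently, and that (like the paper's own displayed computation, which yields $\pi^2 T\vartheta^2$ rather than the stated $\pi^2 T^2\vartheta^2$) the Brownian-noise constant is a matter of whether one bounds $\mathbb{E}\xi_k^2(s)=s$ by $T$ before or after an extra integration; neither affects correctness.
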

	\begin{proof}
		In this proof, we will find the estimate between $u_T= u(.,T)$ and the approximate function $w_{M_n,n}$ defined in
		\eqref{wMn}.
		By  the formula of $u_T$  in Lemma \ref{lemma2.3}, we get
		\begin{align}
		u_T(x)&= \sum_{p=0}^\infty \Big< u_T, \phi_p \Big>\phi_p(x)\nn\\
		&=\dfrac{1}{n}\sum_{k=1}^n u_T(x_k) - \widetilde {G}_{n0}+ \sum_{p=1}^{M_n} \Bigg( \dfrac{\pi}{n} \sum_{k=1}^n u_T(x_k) \phi_p(x_k) -  \widetilde {G}_{np} \Bigg)\phi_p(x)\nn\\
		&+ \sum_{p=M_n+1}^\infty \Big< u_T, \phi_p \Big>\phi_p(x).
		\end{align}
		This together with \eqref{wMn} and the fact that $	\widetilde u_T(x_k)=u_T(x_k)+\sigma_k\epsilon_k$ gives
		\begin{align}
		\|\overline  w_{M_n,n}- { u}_T\|^2_{L^2(\Omega)} \le \Big[ \dfrac{1}{n}\sum_{k=1}^n \sigma_k \epsilon_k - \widetilde {G}_{n0} \Big]^2&+\sum_{p=1}^{M_n} \Bigg[  \dfrac{\pi}{n} \sum_{k=1}^n \sigma_k \epsilon_k \phi_p(x_k) -  \widetilde {G}_{np} \Bigg]^2\nonumber\\\
		&+ \sum_{p=M_n+1}^\infty \Big<{ u}_T, \phi_p\Big>^2. \label{error}
		\end{align}	
		This implies that
		\begin{align}
		\mathbb{\bf E} \|\overline w_{M_n,n}- { u}_T\|^2_{L^2(\Omega)}&\le \frac{1}{n^2} \sum_{k=1}^n \sigma_k^2 \mathbb{\bf E}\epsilon_k^2 + \Big|\widetilde {G}_{n0}\Big|^2+ \Bigg[\frac{\pi^2}{n^2}\sum_{p=1}^{M_n} \sum_{k=1}^n \sigma_k^2  \mathbb{\bf E}\epsilon_k^2 \phi_p^2(x_k)+ \sum_{p=1}^{M_n} \Big|\widetilde {G}_{np}\Big|^2\Bigg]\nn\\
		&+\sum_{p=M_n+1}^\infty \Big<{ u}_T, \phi_p\Big>^2. \label{estimate0}
		\end{align}		
		Further, by the formula
		\begin{equation}
		\|{ u}_T\|_{H^\delta(\Omega)}^2= \sum_{p=0}^\infty p^{2\delta} \Big<  { u}_T, \phi_p\Big>^2
		\end{equation}
		we obtain that
		for all $p \in \mathbb{N}, p \ge 1$
		\begin{align}
		\left|\Big<  { u}_T, \phi_p \Big> \right|  \le \frac{\|{ u}_T\|_{H^\delta(\Omega)}}{p^{\delta }}. \label{estimate11}
		\end{align}
		Now, we estimate the term $\widetilde G_{np} $ for $p \ge 0$. By inequality  \eqref{estimate11}, we have the bound for $\widetilde G_{n0}$
		\begin{equation}
		\widetilde G_{n0} \le \sqrt{\dfrac{2}{\pi}}\sum_{l=1}^\infty \left| {\langle { u}_T,{\phi _{2ln}}\rangle } \right|
		\le \sqrt {\frac{2}{\pi }} \sum\limits_{l = 1}^\infty  \frac{ \|{ u}_T\|_{H^\delta(\Omega)} }{2^{\delta}l^{\delta} n^{\delta}} ,\label{iq2}
		\end{equation}
		and the bound for $\widetilde G_{np}$
		\begin{align}
		\widetilde G_{np} &\le \sum_{l=1}^\infty  \Bigg|  \Big \langle { u}_T, \phi_{p+2ln} \Big\rangle + \Big \langle { u}_T, \phi_{-p+2ln} \Big\rangle\Bigg|\nonumber\\
		&\le  \|{ u}_T\|_{H^\delta(\Omega)} \Bigg[\sum_{l=1}^\infty \frac{1}{(p+2ln)^{\delta}} +\sum_{l=1}^\infty \frac{1}{(-p+2ln)^{\delta}}\Bigg]\nonumber\\
		& \le \frac{2\|{ u}(.,T)\|_{H^\delta(\Omega)}}{4^\delta} \Big( \sum\limits_{l = 1}^\infty  \frac{1}{l^{\delta}} \Big) \frac{1}{n^{\delta}}. \label{iq3}
		\end{align}	
		Since $\delta >1$, we know that the series   $\sum\limits_{l = 1}^\infty  \frac{1}{l^{\delta}}$ converges. Let us denote
		$$
		\overline C (\delta,u)= \max\Big(\frac{2}{4^\delta}, \frac{\sqrt{2}}{\sqrt{\pi}2^\delta } \Big)  \Big( \sum\limits_{l = 1}^\infty  \frac{1}{l^{\delta}} \Big)\|{ u}_T\|_{H^\delta(\Omega)}.
		$$
		By  \eqref{iq2} and \eqref{iq3}, we get
		\begin{equation}
		\widetilde G_{np} \le  \frac{\overline C (\delta,u)}{n^{\delta}},\quad \text{for all }\quad p \ge 0.  \label{Gnp}
		\end{equation}
		It follows from  \eqref{estimate0} that
		\begin{align}
		&\mathbb{\bf E} \|\overline w_{M_n,n}- {u}_T\|^2_{L^2(\Omega)} \nn\\
		&\le  \frac{ V_{\max}^2 (\pi^2 M_n+1)}{n} +  \frac{(M_n+1)\big|\overline C (\delta,u) \big|^2}{n^{2\delta}}+\sum_{p=M_n+1}^\infty \Big<{u}_T, \phi_p\Big>^2\nn\\
		&\le  \frac{ V_{\max}^2 (\pi^2 M_n+1)}{n} +  \frac{(M_n+1)\big|\overline C (\delta,u) \big|^2}{n^{2\delta}}+ e^{-2TM_n^2\beta}\sum_{p=M_n+1}^\infty e^{2T p^{2\beta}} \Big<{u}_T, \phi_p\Big>^2\nn\\
		&\le  \left(\pi^2 V_{\max}^2+ \big|\overline C (\delta,u) \big|^2 \right) \frac{M_n+1}{n}+ e^{-2TM_n^{2\beta}}\|{u}_T\|_{\tilde V(\Omega)}^2 .
		\end{align}
		Assume that $g \in L^\infty([0,T];H^\delta(\Omega)) $ and let us define
		$$
		\overline D (\delta,g)= \max\Big(\frac{2}{4^\delta}, \frac{\sqrt{2}}{\sqrt{\pi}2^\delta } \Big) \Big( \sum\limits_{l = 1}^\infty  \frac{1}{l^{\delta}} \Big) \|g\|_{L^\infty([0,T];H^\delta(\Omega))} .
		$$
		In  a similar way, we show that
		\begin{equation}
		\widetilde H_{np}(t) \le  \frac{\overline D (\delta,g)}{n^{\delta}},\quad \text{for all }\quad p \ge 0, t \in [0,T].  \label{Hnp}
		\end{equation}
		and
		\begin{align}
		\|\overline g_{M_n,n}(.,t)- g(.,t)\|^2& \le \Big[ \dfrac{1}{n}\sum_{k=1}^n \vartheta \xi_k(t)- \widetilde {H}_{n0}(t) \Big]^2+\sum_{p=1}^{M_n} \Bigg[  \dfrac{\pi}{n} \sum_{k=1}^n \vartheta \xi_k(t)\phi_p(x_k) -  \widetilde {H}_{np}(t) \Bigg]^2\nonumber\\\
		&+ \sum_{p=M_n+1}^\infty \Big<g(\cdot,t), \phi_p\Big>^2.
		\end{align}		
		From the properties of Brownian motion, we known that $\mathbb{\bf E}[\xi_{i}(t)\xi_{k}(t)] = 0$ for $k \ne i$ and $\mathbb{\bf E}\xi_{k}^2(t) = t$. By the H\"{o}lder inequality, we obtain
{
		\begin{align}
		\mathbb{\bf E} \|\overline g_{M_n,n}(.,t)- g(.,t)\|^2& \le  \dfrac{1}{n^2}\sum_{k=1}^n \vartheta^2 \mathbb{\bf E} \xi_k^2(t)+\sum_{p=1}^{M_n}   \dfrac{\pi^2}{n^2} \sum_{k=1}^n \vartheta^2 \mathbb{\bf E}\xi_{k}^2(t)\phi_p^2(x_k) + \sum_{p=0}^{M_n} \big| \widetilde {H}_{np}(t)\big|^2 \nn\\
		&+ e^{-2TM_n^{\beta}} \sum_{p=M_n+1}^\infty e^{2T p^{2\beta}} \Big<g(\cdot,t), \phi_p\Big>^2\nn\\
		&\le \frac{T \vartheta^2 (\pi^2 M_n+1)}{n}+\frac{(M_n+1)\big|\overline D (\delta,g)\big|^2}{n^{2\delta}} + e^{-2TM_n^{2\beta}} \|g\|^2_{L^\infty([0,T]; \widetilde V(\Omega))} \nn\\
		&\le \left( \pi^2 T \vartheta^2+ \big|\overline D (\delta,g)\big|^2 \right)  \frac{M_n+1}{n}+e^{-2TM_n^{2\beta}} \|g\|^2_{L^\infty([0,T]; \widetilde V(\Omega))}.
		\end{align}		
}
	\end{proof}
	
\begin{proof}[ \bf Proof of Theorem \ref{thm-time-dependent-1}]
\noindent	We now return the proof of Theorem.
	The main equation in \eqref{problem444} can be rewritten as follows
	\begin{equation}
	\frac{\partial { u} }{\partial t} +   \overline  a(t) (-\Delta )^\beta { u} = F({ u}(x,t))+ g(x,t)+ \Big(\overline  a(t)-  a(t)  \Big) (-\Delta )^\beta  { u}  .
	\end{equation}
	For $\rho_n>0$, we put $ \mathbf{ Y}_{M_n,n}(x,t)=e^{\rho_n(t-T)}\Big[ W_{M_n,n}(x,t)-{u}(x,t)\Big].$ Then, from the last two equalities,  a simple computation gives
	\begin{align}
	\frac{\partial \mathbf{ Y}_{M_n,n}}{\partial t} & - \overline  b(t) (-\Delta )^\beta \mathbf{ Y}_{M_n,n}-\rho_n \mathbf{ Y}_{M_n,n} \nn\\
	&= -e^{\rho_n(t-T)} a_0 {\bf \overline P}_{n,\beta}  \mathbf{ Y}_{M_n,n}+e^{\rho_n(t-T)} a_0  {\bf \overline R}_{n,\beta}  {u} - e^{\rho_n(t-T)}\Big(\overline  a(t)-  a(t)  \Big)(-\Delta )^\beta  { u}  \nn\\
	&\quad + e^{\rho_n (t-T)}\Big[\overline F_{Q_n}(W_{M_n,n}(x,t))- F\big(u(x,t)\big)\Big]\nn\\
	&\quad + e^{\rho_n (t-T)}\Big[\overline g_{M_n,n}(x,t))- g(x,t)\Big],\quad (x,t) \in \Omega\times (0,T), \label{difference}
	\end{align}
	and
	$$ \frac{\partial \mathbf{ Y}_{M_n,n} }{\partial x}|_{\partial \Omega}=0, ~ \mathbf{ Y}_{M_n,n}(x,T)=\overline w_{M_n,n}(x)- { u}_T(x) .$$
	Here, we note that
	\begin{align}
	\Big<v, (-\Delta )^\beta v\Big>_{L^2(\Omega)}&= \int_{\Omega}  \Big( \sum_{p=0}^\infty <v, \phi_p>\phi_p(x) \Big) \Big( \sum_{p=0}^\infty p^{2\beta} <v, \phi_p>\phi_p(x) \Big)dx \nn\\
	&= \sum_{p=0}^\infty p^{2\beta} <v, \phi_p>^2 =\|v\|^2_{H^\beta(\Omega)} .
	\end{align}
	By taking the inner product of the  two sides of the latter equality with $\mathbf{Y}_{M_n,n}$
	one deduces that
	\begin{align} \label{3J1}
	\frac{1}{2} \frac{d}{dt}  \|\mathbf{Y}_{M_n,n}(\cdot, t)\|^2_{L^2(\Omega)} &-\overline b(t) \|\mathbf{Y}_{M_n,n}(\cdot, t)\|^2_{H^\beta(\Omega)}- \rho_n \|\mathbf{Y}_{M_n,n}(\cdot, t)\|^2_{L^2(\Omega)}\nn\\
	& = \underbrace{ \Big<	-e^{\rho_n(t-T)} a_0 {\bf \overline P}_{n,\beta}  \mathbf{ Y}_{M_n,n}, \mathbf{Y}_{M_n,n} \big>_{L^2(\Omega)}}_{=:\widetilde{\mathcal{J}}_{1,n}} + \underbrace{ \Big< e^{\rho_n(t-T)} a_0  {\bf \overline R}_{n,\beta} {u}, \mathbf{Y}_{M_n,n} \Big>_{L^2(\Omega)}}_{=:\widetilde{\mathcal{J}}_{2,n}} \nn\\
	&+ \underbrace{\Big< - e^{\rho_n(t-T)}\Big(\overline  a(t)-  a(t)  \Big)(-\Delta )^\beta  { u} , \mathbf{Y}_{M_n,n} \Big>_{L^2(\Omega)}}_{=:\widetilde{\mathcal{J}}_{3,n}} \nn\\
	&+\underbrace{\Big< e^{\rho_n (t-T)}\lf[\overline F_{Q_n}(W_{M_n,n}(\cdot,t))- F\big(u(\cdot,t)\big)\rt], \mathbf{Y}_{M_n,n} \Big>_{L^2(\Omega)}}_{=:\widetilde{\mathcal{J}}_{4,n}}\nn\\
	&+\underbrace{\Big<e^{\rho_n (t-T)}\lf[\overline g_{M_n,n}(\cdot,t))- g(\cdot,t)\rt], \mathbf{Y}_{M_n,n} \Big>_{L^2(\Omega)}}_{=:\widetilde{\mathcal{J}}_{5,n}}	.
	\end{align}
	First, thanks to  Lemma \ref{lemma-r-p-bounds}, we bound   $\widetilde{\mathcal{J}}_{1,n}$ using the Cauchy-Schwartz inequality as follows
	\begin{align} \label{J1}
	\big|\widetilde{\mathcal{J}}_{1,n} \big| &\leq \| \mathbb{\bf P}_{n} \mathbf{Y}_{M_n,n}\|_{L^2(\Omega)}  \| \mathbf{Y}_{M_n,n} (\cdot,t) \|_{L^2(\Omega)}\le   M_n^{2\beta} \| \mathbf{Y}_{M_n,n} (\cdot,t) \|_{L^2(\Omega)}^2.
	\end{align}
	Using Lemma \ref{lemma-r-p-bounds} and Cauchy-Schwartz inequality,   the term  $\widetilde{\mathcal{J}}_{2,n}$ can be  estimated by
	\begin{align} \label{J2}
	\big|\widetilde{\mathcal{J}}_{2,n}\big|
	& \leqq \frac{1}{2} e^{2\rho_n(t-T)}  a_0^2  e^{-TM_n^{2\beta} } \|{u}\|_{L^\infty\lf([0,T];\tilde V(\Omega)\rt)}^2   +\frac{1}{2} \|\mathbf{Y}_{M_n,n}(\cdot,t)\|^2_{L^2(\Omega)} \nn\\
	&\leq \frac{1}{2}  a_0^2  e^{-2TM_n^{2\beta} } \|{u}\|_{L^\infty\lf([0,T];\tilde V(\Omega)\rt)}^2   +\frac{1}{2} \|\mathbf{Y}_{M_n,n}(\cdot,t)\|^2_{L^2(\Omega)},
	\end{align}
	From the inequality $2\langle a_1, a_2\rangle_{L^2(\Omega)} \leq \|a_1\|^2_{L^2(\Omega)} + \|a_2\|^2_{L^2(\Omega)}$ for any  $a_i \in L^2(\Omega) , ~(i=1,2)$, we infer
	\begin{align} \label{J3}
	\big|\widetilde{\mathcal{J}}_{3,n}\big| &=\lf|\Big< - e^{\rho_n(t-T)}\Big(\overline  a(t)-  a(t)  \Big)(-\Delta )^\beta  { u} , \mathbf{Y}_{M_n,n} \Big>_{L^2(\Omega)}\rt| \nn\\
	&\leq \frac{1}{2} e^{2\rho_n(t-T)}  \Big(\overline  a(t)-  a(t)  \Big)^2   \lf\| (-\Delta )^\beta  { u} \rt\|_{L^2(\Omega)}^2 + \frac{1}{2} \|\mathbf{Y}_{M_n,n}(\cdot,t)\|^2_{L^2(\Omega)}   \nn\\
	&\le 	\frac{1}{2}  e^{2\rho_n(t-T)}  \Big(\overline  a(t)-  a(t)  \Big)^2  \|{ u}\|^2_{L^\infty ([0,T];H^{2\beta}(\Omega))}+\frac{1}{2} \|\mathbf{Y}_{M_n,n}(\cdot,t)\|^2_{L^2(\Omega)} .
	\end{align}
	Finally, since $\lim_{n \rightarrow +\infty} Q_n=+\infty$, for a sufficiently large $n >0$ such that $Q_n \geq \|{u}\|_{L^\infty([0,T];L^2(\Omega))}$.  Moreover,  we have $\overline F_{Q_n}(u(x,t))=F(u(x,t))$. Using the global Lipschitz property of $
	\overline F_{Q_n}$, one similarly has for $\big|\widetilde{\mathcal{J}}_{4,n}\big|$ the fact that
	\begin{align} \label{J4}
	\big|\widetilde{\mathcal{J}}_{4,n}\big|&= \Big| \Big< e^{\rho_n (t-T)}\lf[\overline F_{Q_n}(W_{M_n,n}(\cdot,t))- F\big(u(\cdot,t)\big)\rt], \mathbf{Y}_{M_n,n} \Big>_{L^2(\Omega)}\Big|\nn\\
	& \leqq  \lf\|e^{\rho_n (t-T)}\lf[\overline F_{Q_n}(W_{M_n,n}(\cdot,t))- \overline F_{Q_n}\big(u(x,t)\big)\rt]\rt\|_{L^2(\Omega)}~\|\mathbf{Y}_{M_n,n}\|_{L^2(\Omega)}\nn\\
	&\leq  2K(Q_n) \|\mathbf{Y}_{M_n,n}(\cdot,t)\|_{L^2(\Omega)}^2.
	\end{align}
	The term $\big|\widetilde{\mathcal{J}}_{5,n}\big|$ can be bounded by
	\begin{align} \label{J41}
	\big|\widetilde{\mathcal{J}}_{5,n}\big|&= \Big| \Big<e^{\rho_n (t-T)}\lf[\overline g_{M_n,n}(\cdot,t))- g(\cdot,t)\rt], \mathbf{Y}_{M_n,n} \Big>_{L^2(\Omega)}\Big|\nn\\
	& \leq \frac{1}{2}e^{2\rho_n (t-T)} \lf\|\overline g_{M_n,n}(\cdot,t))- g(\cdot,t)\rt\|^2_{L^2(\Omega)}+ \frac{1}{2}\|\mathbf{Y}_{M_n,n}\|^2_{L^2(\Omega)}.
	\end{align}
	Combining   \eqref{3J1}, \eqref{J1}, \eqref{J2}, \eqref{J3} and  \eqref{J41}    gives
	\begin{align}
	\frac{1}{2} \frac{d}{dt}  \|\mathbf{Y}_{M_n,n}(\cdot, t)\|^2_{L^2(\Omega)} &-\rho_n \|\mathbf{Y}_{M_n,n}(\cdot, t)\|^2_{L^2(\Omega)}\nn\\
	&\geq - M_n^{2\beta}  \| \mathbf{Y}_{M_n,n} (\cdot,t) \|_{L^2(\Omega)}^2 -\frac{1}{2}  a_0^2  e^{-2TM_n^{2\beta} } \|{u}\|_{L^\infty\lf([0,T];\tilde V(\Omega)\rt)}^2 \nn\\
	&\quad -\frac{1}{2} \|\mathbf{Y}_{M_n,n}(\cdot,t)\|^2_{L^2(\Omega)} -  e^{2\rho_n(t-T)}  \Big(\overline  a(t)-  a(t)  \Big)^2  \|{ u}\|^2_{L^\infty ([0,T];H^{2\beta}(\Omega))}\nn\\
	& \quad - \|\mathbf{Y}_{M_n,n}(\cdot,t)\|^2_{L^2(\Omega)}- 2 K(Q_n) \|\mathbf{Y}_{M_n,n}(\cdot,t)\|_{L^2(\Omega)}^2\nn\\
	& \quad-\frac{1}{2}e^{2\rho_n (t-T)} \lf\|\overline g_{M_n,n}(\cdot,t))- g(\cdot,t)\rt\|^2_{L^2(\Omega)}- \frac{1}{2}\|\mathbf{Y}_{M_n,n}\|^2_{L^2(\Omega)}.
	\end{align}
	By taking the integral from $t$ to $T$ and by a simple calculation yields
	\begin{align}
	&\|\mathbf{Y}_{M_n,n}(\cdot, T)\|^2_{L^2(\Omega)}  -\|\mathbf{Y}_{M_n,n}(\cdot, t)\|^2_{L^2(\Omega)} \nn\\
	&\quad \quad+ \int_t^T \lf( a_0^2  e^{-2TM_n^{2\beta} }  \|{u}\|_{L^\infty\lf([0,T];\tilde V(\Omega)\rt)}^2  + \Big(\overline  a(s)-  a(s)  \Big)^2  \|{ u}\|^2_{L^\infty [0,T];H^{2\beta}(\Omega))}\rt) ds \nn\\
	&\quad \quad+\int_t^T  e^{2 \rho_n (s-T)} \lf\|\overline g_{M_n,n}(\cdot,s))- g(\cdot,s)\rt\|^2_{L^2(\Omega)} ds\nn\\
	&\quad \quad \geq \int_t^T \Big(2\rho_n- 2M_n^{2\beta} - 4K(Q_n) -4 \Big)\|\mathbf{Y}_{M_n,n}(\cdot, s) \|^2_{L^2(\Omega)} ds.
	\end{align}
	Let us choose $\rho_n=M_n^{2\beta}$.	This leads to
	\begin{align*}
	&e^{2 \rho_n (t-T)}	\|W_{M_n,n}(\cdot,t)-{u}(\cdot,t)\|^2_{L^2(\Omega)}\nn\\
	& \quad \quad  \leq \| \overline w_{M_n,n}(\cdot)- u_T(\cdot) \|^2_{L^2(\Omega)}+ T a_0^2e^{-2T M_n^\beta} \|{u}\|_{L^\infty\lf([0,T];\tilde V(\Omega)\rt)}^2 \nn\\
	& \quad \quad+ T \sup_{0 \le t \le T}\lf\|\overline g_{M_n,n}(\cdot,t))- g(\cdot,t)\rt\|^2_{L^2(\Omega)} + {\|{ u}\|^2_{L^\infty ([0,T];H^{2\beta}(\Omega))}\int_0^T \Big(\overline  a(s)-  a(s)  \Big)^2 ds } \nn\\
	&\quad \quad + \Big(4K(Q_n) +4\Big) \int_t^T e^{2 \rho_n (s-T)} \|W_{M_n,n}(\cdot,s)-{u}(\cdot,s)\|^2_{L^2(\Omega)}ds .
	\end{align*}
	Hence, we obtain
	\begin{align}
	&e^{2 \rho_n (t-T)}	\mathbb{\bf E}  \|W_{M_n,n}(\cdot,t)-{u}(\cdot,t)\|^2_{L^2(\Omega)}  \nn\\
	&\quad \quad  \leq \mathbb{\bf E}  \| \overline w_{M_n,n}(\cdot)- u_T(\cdot) \|^2_{L^2(\Omega)}+  T a_0^2e^{-2T M_n^\beta} \|{u}\|_{L^\infty\lf([0,T];\tilde V(\Omega)\rt)}^2   \nn\\
	&\quad \quad +\int_t^T e^{2 \rho_n (s-T)} \mathbb{\bf E} \lf\|\overline g_{M_n,n}(\cdot,s))- g(\cdot,s)\rt\|^2_{L^2(\Omega)}ds+ {\epsilon^2\|{ u}\|^2_{L^\infty ([0,T];H^{2\beta}(\Omega))}\int_0^T \mathbb{\bf E} \Big|\overline  {\xi}(s)\Big|^2 ds }  \nn\\
	&\quad \quad + \Big(4K(Q_n) +4\Big) \int_t^T e^{2 \rho_n (s-T)} \mathbb{\bf E}  \|W_{M_n,n}(\cdot,s)-{u}(\cdot,s)\|^2_{L^2(\Omega)}ds .
	\end{align}	
	In the above we have used the fact that
	$
	\overline a(t)- a(t)= \ep \overline  {\xi}(t)
	$
	and $\mathbb{\bf E} \Big|\overline  {\xi}(t)\Big|^2 =t$.
	Using  the second inequality of  Lemma \ref{lemma3.6} and noting that $e^{2\rho_n(s-T)} \le 1$ for $0 \le s \le T$, we have the following estimate
	\begin{align}
	&\int_t^T e^{2 \rho_n (s-T)} \mathbb{\bf E} \lf\|\overline g_{M_n,n}(\cdot,s))- g(\cdot,s)\rt\|^2_{L^2(\Omega)}ds \nn\\
	& \le \int_t^T e^{2 \rho_n (s-T)} \Bigg[\left( \pi^2 T^2 \vartheta^2+ \big|\overline D (\delta,g)\big|^2 \right)  \frac{M_n+1}{n}+e^{-2TM_n^{\beta}} \|g\|_{L^\infty([0,T]; \widetilde V(\Omega))}\Bigg]ds \nn\\
	&\le \int_t^T  \Bigg[\left( \pi^2 T^2 \vartheta^2+ \big|\overline D (\delta,g)\big|^2 \right)  \frac{M_n+1}{n}+e^{-2TM_n^{\beta}} \|g\|_{L^\infty([0,T]; \widetilde V(\Omega))}\Bigg]ds \nn\\
	&=\Bigg[\left( \pi^2 T^2 \vartheta^2+ \big|\overline D (\delta,g)\big|^2 \right)  \frac{M_n+1}{n}+e^{-2TM_n^{\beta}} \|g\|_{L^\infty([0,T]; \widetilde V(\Omega))}\Bigg] (T-t)\nn\\
	&\le \left( \pi^2 T^3 \vartheta^2+T \big|\overline D (\delta,g)\big|^2 \right)  \frac{M_n+1}{n}+Te^{-2TM_n^{\beta}} \|g\|_{L^\infty([0,T]; \widetilde V(\Omega))}.
	\end{align}
	From the observations above and by using  the first inequality in Lemma \ref{lemma3.6}, we conclude that
	\begin{align}
	&e^{2 \rho_n (t-T)}	\mathbb{\bf E}  \|W_{M_n,n}(\cdot,t)-{u}(\cdot,t)\|^2_{L^2(\Omega)}  \nn\\
	&\quad \quad \leq \left(\pi^2 V_{\max}^2+ \big|\overline C (\delta,u) \big|^2+ \pi^2 T^3 \vartheta^2+T \big|\overline D (\delta,g)\big|^2 \right) \frac{M_n+1}{n}+ \ep^2 T^2 \|{ u}\|^2_{L^\infty( [0,T];H^{2\beta}(\Omega))}\nn\\
	&\quad \quad + e^{-2TM_n^{\beta}} \Big(\|{u}_T\|_{\tilde V(\Omega)}^2+ T \|g\|^2_{L^\infty([0,T]; \widetilde V(\Omega))}+T a_0^2 \|{u}\|_{L^\infty\lf([0,T];\tilde V(\Omega)\rt)}^2 \Big)\nn\\
	&\quad \quad + \Big(4K(Q_n) +4\Big) \int_t^T e^{2 \rho_n (s-T)} \mathbb{\bf E}  \|W_{M_n,n}(\cdot,s)-{u}(\cdot,s)\|^2_{L^2(\Omega)}ds  .
	\end{align}	
	Multiplying both sides by $e^{2 T M_n^{2\beta} }$ , we obtain	
	\begin{align*}
	&e^{2 t M_n^{2\beta} }	\mathbb{\bf E}  \|W_{M_n,n}(\cdot,t)-{u}(\cdot,t)\|^2_{L^2(\Omega)}  \nn\\
	& \leq \left(\pi^2 V_{\max}^2+ \big|\overline C (\delta,u) \big|^2+ \pi^2 T^3 \vartheta^2+T \big|\overline D (\delta,g)\big|^2 \right) \frac{(M_n+1)e^{2T M_n^{2\beta} }}{n}+ \ep^2 e^{2T M_n^{2\beta} } T^2 \|{ u}\|^2_{L^\infty ([0,T];H^{2\beta}(\Omega))}\nn\\
	& +  \Big(\|\textbf{u}_T\|_{\tilde V(\Omega)}^2+ T \|g\|^2_{L^\infty([0,T]; \widetilde V(\Omega))}+T a_0^2 \|{u}\|_{L^\infty\lf([0,T];\tilde V(\Omega)\rt)}^2 \Big)\nn\\
	& + \Big(4K(Q_n) +4\Big) \int_t^T e^{2s M_n^\beta } \mathbb{\bf E}  \|W_{M_n,n}(\cdot,s)-{u}(\cdot,s)\|^2_{L^2(\Omega)}ds  .
	\end{align*}	
	Applying Gronwall's inequality, we get
	\begin{align*}
	&e^{2t M_n^{2\beta} }	\mathbb{\bf E}  \|W_{M_n,n}(\cdot,t)-{u}(\cdot,t)\|^2_{L^2(\Omega)} \le e^{(4K(Q_n) +4)T} { \Phi} (n,{ u},g,\delta,\ep) ,
	\end{align*}
	where
	\begin{align}
	&{\Phi} (n,{ u},g,\delta,\ep)=\left(\pi^2 V_{\max}^2+ \big|\overline C (\delta,u) \big|^2+ \pi^2 T^3 \vartheta^2+T \big|\overline D (\delta,g)\big|^2 \right) \frac{(M_n+1)e^{2T M_n^{2\beta} }}{n}\nn\\
	&+ \ep^2 e^{2T M_n^{2\beta} } T^2 \|{ u}\|^2_{L^\infty ([0,T];H^{2\beta}(\Omega))} +  \Big(\|{u}_T\|_{\tilde V(\Omega)}^2+ T \|g\|^2_{L^\infty([0,T]; \widetilde V(\Omega))}+T a_0^2 \|{u}\|_{L^\infty\lf([0,T];\tilde V(\Omega)\rt)}^2 \Big).
	\end{align}
	Hence
	\begin{align*}
	\mathbb{\bf E}  \|W_{M_n,n}(\cdot,t)-{u}(\cdot,t)\|^2_{L^2(\Omega)} \le e^{(4K(Q_n) +4)T} e^{-2t M_n^\beta } { \Phi} (n,{ u},g,\delta,\ep).
	\end{align*}	
\end{proof}

\subsection{ Error estimate in $H^\beta(\Omega)$}\label{section-H-beta-estimate}
In this subsection, we give error estimate between the regularized solution and the sought solution in higher Sobolev spaces.
\begin{theorem}\label{thm-high-sob-estimate}
	Suppose that  $M_n$
satisfies
	\begin{equation}
	\lim_{n\to +\infty}  \frac{(M_n^{2\beta+1}+M_n^{2\beta})e^{2 M_n^{2\beta} T}}{n}\quad \text{bounded},
	\end{equation}
	and  $\ep$ satisfies the condition in equation \eqref{noiselevel}.
	Let us choose $Q_n$ such that
	\begin{equation}
	\lim_{n \to +\infty}	e^{-2 M_n^{2\beta} t} \exp\Big( \frac{8}{b_0} K^2(Q_n)T \Big) =0,~~t \in (0,T].
	\end{equation}	
	Assume that $g, { u} \in L^\infty\lf([0,T];\tilde V(\Omega)\rt)$, where $\tilde V(\Omega)$ is defined in Lemma \ref{lemma-r-p-bounds}. Then we have
	\begin{align}
	\mathbb{\bf E}  \|W_{M_n,n}(\cdot,t)-{u}(\cdot,t)\|^2_{H^\beta(\Omega)} \le 	e^{-2 M_n^{2\beta} t} \exp\Big( \frac{8}{b_0} K^2(Q_n)(T-t) \Big)  {\bf \Pi} (n,{u},g,\delta,\ep).
	\end{align}
	Where
\begin{align} \label{Pii-0}
	&{\bf \Pi} (n,{ u},g,\delta,\ep)=\left(\pi^2 V_{\max}^2+ \big|\overline C (\delta,u) \big|^2+ \frac{8}{b_0} \pi^2 T^3 \vartheta^2+\frac{8}{b_0}T \big|\overline D (\delta,g)\big|^2 \right) \frac{(M_n^{2\beta+1}+M_n^{2\beta})e^{2 M_n^{2\beta} T}}{n}\nn\\
	&	+\frac{4 e^{2 M_n^{2\beta} T}\ep^2 T^2}{b_0}  \|{ u}\|^2_{L^\infty [0,T];H^{2\beta}(\Omega))}+  \|{u}_T\|_{\tilde V(\Omega)}^2+ \frac{8T}{b_0} \|g\|^2_{L^\infty([0,T]; \widetilde V(\Omega))}+\frac{8T a_0^2}{b_0} \|\textbf{u}\|_{L^\infty\lf([0,T];\tilde V(\Omega)\rt)}^2
	\end{align}
\end{theorem}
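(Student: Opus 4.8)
The plan is to follow the energy--estimate strategy of the proof of Theorem~\ref{thm-time-dependent-1}, but to test the evolution equation for the error against $(-\Delta)^\beta \mathbf{Y}_{M_n,n}$ rather than against $\mathbf{Y}_{M_n,n}$ itself, so that the time derivative produces $\frac{d}{dt}\|\mathbf{Y}_{M_n,n}\|^2_{H^\beta(\Omega)}$. Concretely, I would keep the change of variables $\mathbf{Y}_{M_n,n}(x,t)=e^{\rho_n(t-T)}\lf[W_{M_n,n}(x,t)-u(x,t)\rt]$ and the governing identity \eqref{difference}, then form the $L^2$--inner product of \eqref{difference} with $(-\Delta)^\beta \mathbf{Y}_{M_n,n}$. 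Using $\langle \partial_t \mathbf{Y}, (-\Delta)^\beta \mathbf{Y}\rangle = \tfrac12 \tfrac{d}{dt}\|\mathbf{Y}\|^2_{H^\beta(\Omega)}$, $\langle (-\Delta)^\beta\mathbf{Y},(-\Delta)^\beta\mathbf{Y}\rangle=\|\mathbf{Y}\|^2_{H^{2\beta}(\Omega)}$ and $\langle \mathbf{Y},(-\Delta)^\beta\mathbf{Y}\rangle=\|\mathbf{Y}\|^2_{H^\beta(\Omega)}$, I obtain the analogue of \eqref{3J1},
\begin{align*}
\frac12\frac{d}{dt}\|\mathbf{Y}_{M_n,n}\|^2_{H^\beta(\Omega)}-\overline b(t)\|\mathbf{Y}_{M_n,n}\|^2_{H^{2\beta}(\Omega)}-\rho_n\|\mathbf{Y}_{M_n,n}\|^2_{H^\beta(\Omega)}=\sum_{i=1}^5 \widetilde{\mathcal J}_{i,n}^{(\beta)},
\end{align*}
where $\widetilde{\mathcal J}_{i,n}^{(\beta)}$ are the pairings of the five source terms of \eqref{difference} with $(-\Delta)^\beta\mathbf{Y}_{M_n,n}$.

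The decisive difference from the $L^2$ argument is that here the anti-diffusion term must be \emph{retained and exploited}: since $\overline b(t)=a_0-\overline a(t)\ge b_0>0$, the quantity $\overline b(t)\|\mathbf{Y}_{M_n,n}\|^2_{H^{2\beta}(\Omega)}$ supplies coercivity in $H^{2\beta}(\Omega)$ that I would use to absorb the top-order part of each source pairing. For the cutoff term I would mimic \eqref{J1}: on the frequency range $p<M_n a_0^{-1/(2\beta)}$ one has $a_0 p^{4\beta}\le M_n^{2\beta}p^{2\beta}$, so $|\widetilde{\mathcal J}_{1,n}^{(\beta)}|\le M_n^{2\beta}\|\mathbf{Y}_{M_n,n}\|^2_{H^\beta(\Omega)}$, which is absorbed by the $\rho_n$ term once $\rho_n=M_n^{2\beta}$. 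For $\widetilde{\mathcal J}_{2,n}^{(\beta)},\widetilde{\mathcal J}_{3,n}^{(\beta)},\widetilde{\mathcal J}_{5,n}^{(\beta)}$ (the $\overline R_{n,\beta}u$, the $(\overline a-a)(-\Delta)^\beta u$, and the $\overline g_{M_n,n}-g$ contributions) I would apply Cauchy--Schwarz followed by Young's inequality $\langle f,(-\Delta)^\beta\mathbf{Y}\rangle\le \tfrac{1}{2\eta}\|f\|^2_{L^2(\Omega)}+\tfrac{\eta}{2}\|\mathbf{Y}\|^2_{H^{2\beta}(\Omega)}$, choosing $\eta$ as a fixed fraction of $b_0$ so that the accumulated $H^{2\beta}$ pieces are dominated by $\overline b(t)\|\mathbf{Y}_{M_n,n}\|^2_{H^{2\beta}(\Omega)}$; the residual $L^2$ pieces are controlled by Lemma~\ref{lemma-r-p-bounds} (for $\overline R_{n,\beta}$), by $\overline a(t)-a(t)=\ep\overline\xi(t)$ with $\mathbb{\bf E}|\overline\xi(t)|^2=t$ (for the $a$-perturbation), and by the $H^\beta$-version of Lemma~\ref{lemma3.6} (for the $g$-error), and this is where the factors $8/b_0$ and $4/b_0$ in ${\bf\Pi}$ originate. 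The nonlinear pairing $\widetilde{\mathcal J}_{4,n}^{(\beta)}$ is the one that changes $K(Q_n)$ into $K^2(Q_n)$: by Lemma~\ref{F-Q-lipshitz} and the definition of $\mathbf{Y}_{M_n,n}$ one has $e^{\rho_n(t-T)}\|\overline F_{Q_n}(W_{M_n,n})-F(u)\|_{L^2(\Omega)}\le 2K(Q_n)\|\mathbf{Y}_{M_n,n}\|_{L^2(\Omega)}$, and Young's inequality against $(-\Delta)^\beta\mathbf{Y}_{M_n,n}$ with parameter $\propto b_0$ yields a term $\propto \frac{K^2(Q_n)}{b_0}\|\mathbf{Y}_{M_n,n}\|^2_{L^2(\Omega)}\le \frac{K^2(Q_n)}{b_0}\|\mathbf{Y}_{M_n,n}\|^2_{H^\beta(\Omega)}$, which after collecting constants and the factor $2$ from the half-derivative produces the coefficient $\tfrac{8}{b_0}K^2(Q_n)$ in the final exponent.

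The remaining work is the $H^\beta$ data estimate and the Gronwall closure. I would re-run Lemma~\ref{lemma3.6} in the $H^\beta(\Omega)$ norm: the truncated and noisy low modes of $\overline w_{M_n,n}-u_T$ and $\overline g_{M_n,n}-g$ carry at most a factor $p^{2\beta}\le M_n^{2\beta}$, which turns the $L^2$ sampling bound $\tfrac{M_n+1}{n}$ into $\tfrac{M_n^{2\beta}(M_n+1)}{n}=\tfrac{M_n^{2\beta+1}+M_n^{2\beta}}{n}$, while the high-mode tail is again controlled by $e^{-2TM_n^{2\beta}}\|\cdot\|^2_{\tilde V(\Omega)}$; these are precisely the ingredients of ${\bf\Pi}(n,u,g,\delta,\ep)$ in \eqref{Pii-0}. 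Gathering the estimates, dropping the nonnegative leftover $H^{2\beta}$ reserve, integrating in time from $t$ to $T$, and choosing $\rho_n=M_n^{2\beta}$ (so the $-M_n^{2\beta}$ from $\widetilde{\mathcal J}_{1,n}^{(\beta)}$ cancels $\rho_n$) yields
\begin{align*}
e^{2\rho_n(t-T)}\mathbb{\bf E}\|W_{M_n,n}(\cdot,t)-u(\cdot,t)\|^2_{H^\beta(\Omega)}\le (\text{data terms})+\frac{8}{b_0}K^2(Q_n)\int_t^T e^{2\rho_n(s-T)}\mathbb{\bf E}\|W_{M_n,n}(\cdot,s)-u(\cdot,s)\|^2_{H^\beta(\Omega)}\,ds.
\end{align*}
Multiplying by $e^{2TM_n^{2\beta}}$ and applying Gronwall's inequality then gives the claimed bound with prefactor $e^{-2M_n^{2\beta}t}\exp\!\big(\tfrac{8}{b_0}K^2(Q_n)(T-t)\big)$.

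I expect the main obstacle to be the constant bookkeeping in the absorption step: one must partition the single coercive reserve $\overline b(t)\|\mathbf{Y}_{M_n,n}\|^2_{H^{2\beta}(\Omega)}$ among the several Young's-inequality applications so that \emph{all} top-order $H^{2\beta}$ contributions are swallowed while the surviving lower-order coefficients assemble exactly into $\tfrac{8}{b_0}K^2(Q_n)$ and the constants inside ${\bf\Pi}$. A secondary but essential point is that this coercivity requires a uniform positive lower bound $\overline b(t)\ge b_0>0$ (equivalently $\sup_t\overline a(t)<a_0$ with a gap), without which the $H^{2\beta}$ term cannot control the higher-order source pairings and the $H^\beta$ estimate breaks down.
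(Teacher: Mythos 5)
Your proposal follows essentially the same route as the paper's proof: pairing the error equation \eqref{difference} with $(-\Delta)^\beta \mathbf{Y}_{M_n,n}$, retaining the coercive term $\overline b(t)\|(-\Delta)^\beta\mathbf{Y}_{M_n,n}\|^2_{L^2(\Omega)}$ under the standing assumption $\overline b(t)\ge b_0>0$ and splitting it into $\tfrac{b_0}{4}$-pieces to absorb the Young-inequality remainders (which is exactly where the $4/b_0$ and $8/b_0$ constants and the upgrade from $K(Q_n)$ to $K^2(Q_n)$ arise), redoing the terminal-data estimate in $H^\beta(\Omega)$ with the extra factor $p^{2\beta}\le M_n^{2\beta}$ to produce $\tfrac{M_n^{2\beta+1}+M_n^{2\beta}}{n}$, choosing $\rho_n=M_n^{2\beta}$, and closing with Gronwall. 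The argument and all the key absorption steps match the paper's.
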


{ 
\begin{remark}
1.	It is easy to see that when $t>0$ then the error $\mathbb{\bf E}  \|W_{M_n,n}(\cdot,t)-{u}(\cdot,t)\|^2_{H^\beta(\Omega)}$ is of order
	\begin{equation}
		e^{-2 M_n^{2\beta} t} \exp\Big( \frac{8}{b_0} K^2(Q_n)T \Big).
	\end{equation}
One example for  $M_n$ and $Q_n$ for  Theorem \ref{thm-high-sob-estimate} can be found  in Remark \ref{mainremark}. The estimate at $t=0$ is showed by similar argument as in Lemma \ref{mainlemma}, so, we omit it here. \\

2. In  Theorems \ref{thm-time-dependent-1} and \ref{thm-high-sob-estimate} the upper bounds of the approximations are very complex. The reason is that  the models in Section \ref{section3} are  more complex than the ones in Section \ref{section2}.  Indeed, in section \ref{section3}, the time dependent coefficient $a(t)$ is noisy by random coefficients and the source term $F$ is locally Lipschitz continuous. The estimates  in this case are not simple.
\end{remark}
}

\begin{proof}[\bf Proof of Theorem \ref{thm-high-sob-estimate}]
	Assume that $\overline b(t) \ge b_0>0$.
	By taking the inner product of the  two sides of  equality \eqref{difference} with $(-\Delta )^\beta \mathbf{Y}_{M_n,n}$
	one deduces that
	\begin{align*} \label{3J}
	&\frac{1}{2} \frac{d}{dt}  \|\mathbf{ Y}_{M_n,n}(\cdot, t) \|^2_{H^\beta(\Omega)} - \overline  b(t) \Big\|(-\Delta )^\beta \mathbf{ Y}_{M_n,n}\Big\|^2_{L^2(\Omega)}- \rho_n \|\mathbf{  Y}_{M_n,n}(\cdot, t)\|^2_{H^\beta(\Omega)}\nn\\
	& = \underbrace{ \Big<	- e^{\rho_n(t-T)} a_0 {\bf \overline P}_{n,\beta}  \mathbf{ Y}_{M_n,n}, (-\Delta )^\beta \mathbf{ Y}_{M_n,n} \big>_{L^2(\Omega)}}_{=:\widetilde{\mathcal{J}}_{6,n}} + \underbrace{ \Big< e^{\rho_n(t-T)} a_0{\bf \overline R}_{n,\beta} {u}, (-\Delta )^\beta \mathbf{ Y}_{M_n,n} \Big>_{L^2(\Omega)}}_{=:\widetilde{\mathcal{J}}_{7,n}} \nn\\
	&+ \underbrace{\Big< - e^{\rho_n(t-T)}\Big(\overline  a(t)-  a(t)  \Big)(-\Delta )^\beta  { u} , (-\Delta )^\beta \mathbf{Y}_{M_n,n} \Big>_{L^2(\Omega)}}_{=:\widetilde{\mathcal{J}}_{8,n}} \nn\\
	&+\underbrace{\Big< e^{\rho_n (t-T)}\lf[\overline F_{Q_n}(W_{M_n,n}(\cdot,t))- F\big(u(\cdot,t)\big)\rt], -(-\Delta )^\beta \mathbf{\widehat Y}_{M_n,n} \Big>_{L^2(\Omega)}}_{=:\widetilde{\mathcal{J}}_{9,n}}\nn\\
	&+\underbrace{\Big<e^{\rho_n (t-T)}\lf[\overline g_{M_n,n}(\cdot,t))- g(\cdot,t)\rt], (-\Delta )^\beta \mathbf{ Y}_{M_n,n} \Big>_{L^2(\Omega)}}_{=:\widetilde{\mathcal{J}}_{10,n}}	.
	\end{align*}
	For $\widetilde{\mathcal{J}}_{6,n}$, we have
	\begin{align*}
	&|\widetilde{\mathcal{J}}_{6,n}|=\left| \Big<	- e^{\rho_n(t-T)} a_0 {\bf \overline P}_{n,\beta}  \mathbf{ Y}_{M_n,n}, (-\Delta )^\beta \mathbf{ Y}_{M_n,n} \big>_{L^2(\Omega)} \right| \nn\\
	&=e^{\rho_n(t-T)} a_0 \int_{\Omega}  \Big(  \sum_{p < M_n a_0^{-\frac{1}{2\beta}  }}   p^{2\beta} <\mathbf{ Y}_{M_n,n}, \phi_p>\phi_p(x) \Big) \Big( \sum_{p=0}^\infty p^{2\beta} <\mathbf{ Y}_{M_n,n}, \phi_p>\phi_p(x) \Big)dx\nn\\
	&=e^{\rho_n(t-T)} a_0  \sum_{p < M_n a_0^{-\frac{1}{2\beta}  }}   p^{4\beta}<\mathbf{ Y}_{M_n,n}, \phi_p>^2 \nn\\
	&\le M_n^{2\beta} \sum_{p < M_n a_0^{-\frac{1}{2\beta}  }}   p^{2\beta}<\mathbf{ Y}_{M_n,n}, \phi_p>^2 \le  M_n^{2\beta} \|\mathbf{ Y}_{M_n,n}\|_{H^\beta(\Omega)}^2 .
	\end{align*}
	We bound $\widetilde{\mathcal{J}}_{7,n}$ by
	\begin{align*}
	\Big|\widetilde{\mathcal{J}}_{7,n}\Big| &\le \frac{4}{b_0}  e^{2\rho_n(t-T)} a_0^2 \Big\|{\bf \overline R}_{n,\beta}  \mathbf{ Y}_{M_n,n}\Big\|^2_{L^2(\Omega)}+\frac{b_0}{4} \Big\|(-\Delta )^\beta \mathbf{ Y}_{M_n,n}\Big\|^2_{L^2(\Omega)}\nn\\
	&\le \frac{4}{b_0}  a_0^2 e^{-TM_n^{2\beta} } \|{u}\|_{L^\infty\lf([0,T];\tilde V(\Omega)\rt)}^2 +\frac{b_0}{4} \Big\|(-\Delta )^\beta \mathbf{ Y}_{M_n,n}\Big\|^2_{L^2(\Omega)}.
	\end{align*}
	The term $\widetilde{\mathcal{J}}_{8,n}$ is bounded by
	\begin{align*}
	\Big|\widetilde{\mathcal{J}}_{8,n}\Big|
	&\le \frac{4}{b_0}  e^{2\rho_n(t-T)}  \Big(\overline  a(t)-  a(t)  \Big)^2   \lf\| (-\Delta )^\beta  { u} \rt\|_{L^2(\Omega)}^2 +\frac{b_0}{4} \Big\|(-\Delta )^\beta \mathbf{ Y}_{M_n,n}\Big\|^2_{L^2(\Omega)}\nn\\
	&\le  \frac{4}{b_0}   e^{2\rho_n(t-T)}  \Big(\overline  a(t)-  a(t)  \Big)^2  \|{ u}\|^2_{L^\infty( [0,T];H^{2\beta}(\Omega))}+\frac{b_0}{4} \Big\|(-\Delta )^\beta \mathbf{ Y}_{M_n,n}\Big\|^2_{L^2(\Omega)}.
	\end{align*}
	The term $\widetilde{\mathcal{J}}_{9,n}$ is estimated as follows
	\begin{align*}
	\Big|\widetilde{\mathcal{J}}_{9,n}\Big|  &\le \frac{4}{b_0}   e^{2\rho_n(t-T)} \Big\| \overline F_{Q_n}(W_{M_n,n}(\cdot,t))- F\big(u(\cdot,t)\big) \Big\|^2 +\frac{b_0}{4} \Big\|(-\Delta )^\beta \mathbf{ Y}_{M_n,n}\Big\|^2_{L^2(\Omega)}\nn\\
	&\le \frac{4}{b_0}  K^2(Q_n) \|\mathbf{Y}_{M_n,n}(\cdot,t)\|_{L^2(\Omega)}^2 +\frac{b_0}{4} \Big\|(-\Delta )^\beta \mathbf{ Y}_{M_n,n}\Big\|^2_{L^2(\Omega)}\nn\\
	&\le \frac{4}{b_0}  K^2(Q_n) \|\mathbf{Y}_{M_n,n}(\cdot,t)\|_{H^\beta(\Omega)}^2 +\frac{b_0}{4} \Big\|(-\Delta )^\beta \mathbf{ Y}_{M_n,n}\Big\|^2_{L^2(\Omega)}.
	\end{align*}
	where in the latter inequality,  we have noted that for all $v \in H^\beta(\Omega)$ then
	\begin{align}
	\|v\|^2_{H^\beta(\Omega)}= \sum_{p=0}^\infty p^{2\beta} <v, \phi_p>^2 \ge \sum_{p=0}^\infty  <v, \phi_p>^2 = \|v\|^2_{L^2(\Omega)}.
	\end{align}
	The term $\big|\widetilde{\mathcal{J}}_{10,n}\big|$ can be bounded by
	\begin{align} \label{J4}
	\big|\widetilde{\mathcal{J}}_{10,n}\big|&= \Big| \Big<e^{\rho_n (t-T)}\lf[\overline g_{M_n,n}(\cdot,t))- g(\cdot,t)\rt], (-\Delta)^\beta \mathbf{Y}_{M_n,n} \Big>_{L^2(\Omega)}\Big|\nn\\
	& \leq \frac{4}{b_0} e^{2\rho_n (t-T)} \lf\|\overline g_{M_n,n}(\cdot,t))- g(\cdot,t)\rt\|^2_{L^2(\Omega)}+ \frac{b_0}{4} \Big\|(-\Delta )^\beta \mathbf{ Y}_{M_n,n}\Big\|^2_{L^2(\Omega)}.
	\end{align}
	{ From the above observations, we obtain
	\begin{align} \label{3J}
	&\frac{1}{2} \frac{d}{dt}  \|\mathbf{ Y}_{M_n,n}(\cdot, t) \|^2_{H^\beta(\Omega)}\nn\\
	& \geq \overline  b(t) \Big\|(-\Delta )^\beta \mathbf{ Y}_{M_n,n}\Big\|^2_{L^2(\Omega)}+ \rho_n \|\mathbf{  Y}_{M_n,n}(\cdot, t)\|^2_{H^\beta(\Omega)}- M_n^{2\beta}  \Big\| \mathbf{ Y}_{M_n,n}\Big\|_{H^\beta(\Omega)}^2\nn\\
	&\quad \quad \quad-\frac{4 a_0^2}{b_0}  e^{-TM_n^{2\beta} } \|{u}\|_{L^\infty\lf([0,T];\tilde V(\Omega)\rt)}^2 -\frac{b_0}{4} \Big\|(-\Delta )^\beta \mathbf{ Y}_{M_n,n}\Big\|^2_{L^2(\Omega)}\nn\\
	&\quad \quad \quad-\frac{4}{b_0}   e^{2\rho_n(t-T)}  \Big(\overline  a(t)-  a(t)  \Big)^2  \|{ u}\|^2_{L^\infty( [0,T];H^{2\beta}(\Omega))}-\frac{b_0}{4} \Big\|(-\Delta )^\beta \mathbf{ Y}_{M_n,n}\Big\|^2_{L^2(\Omega)}\nn\\
	&\quad \quad \quad-\frac{4}{b_0}  K^2(Q_n) \|\mathbf{Y}_{M_n,n}(\cdot,t)\|_{H^\beta(\Omega)}^2 -\frac{b_0}{4} \Big\|(-\Delta )^\beta \mathbf{ Y}_{M_n,n}\Big\|^2_{L^2(\Omega)}\nn\\
	&\quad \quad \quad-\frac{4}{b_0} e^{2\rho_n (t-T)} \lf\|\overline g_{M_n,n}(\cdot,t)- g(\cdot,t)\rt\|^2_{L^2(\Omega)}- \frac{b_0}{4} \Big\|(-\Delta )^\beta \mathbf{ Y}_{M_n,n}\Big\|^2_{L^2(\Omega)}\nn\\
	&= \left( \overline  b(t) -b_0\right)\Big\|(-\Delta )^\beta \mathbf{ Y}_{M_n,n}\Big\|^2_{L^2(\Omega)}-\frac{4}{b_0}   e^{2\rho_n(t-T)}  \Big(\overline  a(t)-  a(t)  \Big)^2  \|{ u}\|^2_{L^\infty( [0,T];H^{2\beta}(\Omega))}\nn\\
	&\quad \quad \quad-\frac{4}{b_0} e^{2\rho_n (t-T)} \lf\|\overline g_{M_n,n}(\cdot,t)- g(\cdot,t)\rt\|^2_{L^2(\Omega)}-\frac{4 a_0^2}{b_0}  e^{-TM_n^{2\beta} } \|{u}\|_{L^\infty\lf([0,T];\tilde V(\Omega)\rt)}^2\nn\\
	&\quad \quad \quad+\left(  \rho_n- M_n^{2\beta} -\frac{4}{b_0}  K^2(Q_n)\right) \|\mathbf{ Y}_{M_n,n}(\cdot, t) \|^2_{H^\beta(\Omega)}.
	\end{align}
}
By the fact that $\overline  b(t) \ge b_0$, we know that the term $\left( \overline  b(t) -b_0\right)\Big\|(-\Delta )^\beta \mathbf{ Y}_{M_n,n}\Big\|^2_{L^2(\Omega)}$ is non-negative. It follows from \eqref{3J} that
\begin{align}
\frac{d}{dt}  \|\mathbf{ Y}_{M_n,n}(\cdot, t) \|^2_{H^\beta(\Omega)} &+\frac{8 a_0^2}{b_0}  e^{-TM_n^{2\beta} } \|{u}\|_{L^\infty\lf([0,T];\tilde V(\Omega)\rt)}^2\nn\\
&+\frac{8}{b_0}   e^{2\rho_n(t-T)}  \Big(\overline  a(t)-  a(t)  \Big)^2  \|{\bf u}\|^2_{L^\infty( [0,T];H^{2\beta}(\Omega))}\nn\\
&+\frac{8}{b_0} e^{2\rho_n (t-T)} \lf\|\overline g_{M_n,n}(\cdot,t)- g(\cdot,t)\rt\|^2_{L^2(\Omega)}\nn\\
&\ge 2\left(  \rho_n- M_n^{2\beta} -\frac{4}{b_0}  K^2(Q_n)\right) \|\mathbf{ Y}_{M_n,n}(\cdot, t) \|^2_{H^\beta(\Omega)}.
\end{align}
	By taking the integral from $t$ to $T$, we obtain that
	\begin{align}  \label{176}
	&\|\mathbf{Y}_{M_n,n}(\cdot, T)\|^2_{H^\beta(\Omega)}  -\|\mathbf{Y}_{M_n,n}(\cdot, t)\|^2_{H^\beta(\Omega)} \nn\\
	&\quad \quad+ \underbrace{\int_t^T \lf( \frac{8 a_0^2}{b_0}   e^{-2TM_n^{2\beta} }  \|{u}\|_{L^\infty\lf([0,T];\tilde V(\Omega)\rt)}^2  +\frac{8}{b_0}  \Big(\overline  a(s)-  a(s)  \Big)^2  \|{ u}\|^2_{L^\infty( [0,T];H^{2\beta}(\Omega))}\rt) ds }_{:= J_{11}}\nn\\
	&\quad \quad+\underbrace{\int_t^T  e^{2 \rho_n (s-T)} \frac{8}{b_0} \lf\|\overline g_{M_n,n}(\cdot,s))- g(\cdot,s)\rt\|^2_{L^2(\Omega)} ds}_{:=J_{12}}\nn\\
	&\quad \quad \geq \int_t^T \Big(2\rho_n- 2M_n^{2\beta} - \frac{8}{b_0} K^2(Q_n)  \Big)\|\mathbf{Y}_{M_n,n}(\cdot, s) \|^2_{H^\beta(\Omega)} ds.
	\end{align}
	Let us choose $\rho_n=M_n^{2\beta}$, we have that
	\begin{align}
	\|\mathbf{Y}_{M_n,n}(\cdot, t)\|^2_{H^\beta(\Omega)} &\le \frac{8}{b_0} K^2(Q_n)  \int_t^T   \|\mathbf{Y}_{M_n,n}(\cdot, s) \|^2_{H^\beta(\Omega)} ds+ 	\|\mathbf{Y}_{M_n,n}(\cdot, T)\|^2_{H^\beta(\Omega)} \nn\\
	&+ J_{11}+J_{12}.\label{177}
	\end{align}
	
Next we give upper bounds for  the terms $J_{11}$ and $J_{12}$ of 	\eqref{177}.\\
For $J_{11}$, by equation \eqref{a}, we have
\begin{align}
J_{11} &\le \frac{8 a_0^2}{b_0} (T-t)  e^{-2TM_n^{2\beta} }  \|{u}\|_{L^\infty\lf([0,T];\tilde V(\Omega)\rt)}^2 +\frac{8}{b_0} \|{ u}\|^2_{L^\infty ([0,T];H^{2\beta}(\Omega))} \int_t^T \ep^2 |\overline \xi(s)|^2 ds.
\end{align}
Since ${\bf E}|\overline \xi(s)|^2=s $, we have the following estimation
\begin{align}
{\bf E} J_{11}
&\le \frac{8 T a_0^2}{b_0}   e^{-2TM_n^{2\beta} }  \|{u}\|_{L^\infty\lf([0,T];\tilde V(\Omega)\rt)}^2 +\frac{8}{b_0} \|{ u}\|^2_{L^\infty ([0,T];H^{2\beta}(\Omega))} \int_t^T \ep^2 {\bf E}|\overline \xi(s)|^2  ds \nn\\
&\le \frac{8 T a_0^2}{b_0}   e^{-2TM_n^{2\beta} }  \|{u}\|_{L^\infty\lf([0,T];\tilde V(\Omega)\rt)}^2 +\frac{4 \ep^2 T^2}{b_0} \|{ u}\|^2_{L^\infty ([0,T];H^{2\beta}(\Omega))} . \label{178}
\end{align}
For $J_{12}$, by equation \eqref{129}, we get
\begin{align}
{\bf E} J_{12} &\le   \frac{8}{b_0} \int_t^T  {\bf E} \lf\|\overline g_{M_n,n}(\cdot,s))- g(\cdot,s)\rt\|^2_{L^2(\Omega)}ds\nn\\
& \le \frac{8}{b_0} \Bigg[\left( \pi^2 T^2 \vartheta^2+ \big|\overline D (\delta,g)\big|^2 \right)  \frac{M_n+1}{n}+e^{-2TM_n^{2\beta}} \|g\|_{L^\infty([0,T]; \widetilde V(\Omega))} \Bigg] (T-t). \label{179}
\end{align}
Now, we continue to estimate $\|\mathbf{Y}_{M_n,n}(\cdot, T)\|^2_{H^\beta(\Omega)} $ of 	\eqref{176}.
		From \eqref{error}, we obtain	\begin{align}
		\|\mathbf{Y}_{M_n,n}(\cdot, T)\|^2_{H^\beta(\Omega)}& =\|\overline  w_{M_n,n}- { u}_T\|^2_{H^\beta(\Omega)}\nn\\
		 &\le \Big[ \dfrac{1}{n}\sum_{k=1}^n \sigma_k \epsilon_k - \widetilde {G}_{n0} \Big]^2+\sum_{p=1}^{M_n} p^{2\beta} \Bigg[  \dfrac{\pi}{n} \sum_{k=1}^n \sigma_k \epsilon_k \phi_p(x_k) -  \widetilde {G}_{np} \Bigg]^2\nonumber\\\
		&+ \sum_{p=M_n+1}^\infty p^{2\beta} \Big<{ u}_T, \phi_p\Big>^2\nn\\
		&\le\Big[ \dfrac{1}{n}\sum_{k=1}^n \sigma_k \epsilon_k - \widetilde {G}_{n0} \Big]^2 +M_n^{2\beta} \sum_{p=1}^{M_n}  \Bigg[  \dfrac{\pi}{n} \sum_{k=1}^n \sigma_k \epsilon_k \phi_p(x_k) -  \widetilde {G}_{np} \Bigg]^2\nn\\
		&+\sum_{p=M_n+1}^\infty p^{2\beta} \Big<{u}_T, \phi_p\Big>^2.
		\end{align}	
		Using the similar techniques as in the proof of Theorem 3.1,  we get
		\begin{align}
	{\bf E} \|\mathbf{Y}_{M_n,n}(\cdot, T)\|^2_{H^\beta(\Omega)}	&\le   \left(\pi^2 V_{\max}^2+ \big|\overline C (\delta,u) \big|^2 \right) \frac{M_n^{2\beta+1}+M_n^{2\beta}}{n}\nn\\
		&+ e^{-2TM_n^2\beta}\sum_{p=M_n+1}^\infty p^{2\beta } e^{2T p^{2\beta}} \Big<{u}_T, \phi_p\Big>^2 \nn\\
		& \le \left(\pi^2 V_{\max}^2+ \big|\overline C (\delta,u) \big|^2 \right) \frac{M_n^{2\beta+1}+M_n^{2\beta}}{n}+  e^{-2TM_n^{2\beta}}\|{u}_T\|_{\tilde V(\Omega)}^2. \label{181}
		\end{align}
	Combining equations \eqref{177}, \eqref{178}, \eqref{179}, \eqref{181}, we derive  that
	\begin{align*}
	&e^{2 M_n^{2\beta} t}	\mathbb{\bf E}  \|W_{M_n,n}(\cdot,t)-{u}(\cdot,t)\|^2_{H^\beta(\Omega)}  \nn\\
	&\quad \quad \leq \left(\pi^2 V_{\max}^2+ \big|\overline C (\delta,u) \big|^2+ \frac{8}{b_0} \pi^2 T^3 \vartheta^2+\frac{8}{b_0}T \big|\overline D (\delta,g)\big|^2 \right) \frac{(M_n^{2\beta+1}+M_n^{2\beta})e^{2 M_n^{2\beta} T}}{n}\nn\\
	&	+\frac{4 e^{2 M_n^{2\beta} T}\ep^2 T^2}{b_0}  \|{ u}\|^2_{L^\infty [0,T];H^{2\beta}(\Omega))}\nn\\
	&+  \Big(\|{u}_T\|_{\tilde V(\Omega)}^2+ \frac{8T}{b_0} \|g\|^2_{L^\infty([0,T]; \widetilde V(\Omega))}+\frac{8T a_0^2}{b_0} \|{u}\|_{L^\infty\lf([0,T];\tilde V(\Omega)\rt)}^2 \Big)\nn\\
	&\quad \quad +\frac{8}{b_0} K^2(Q_n)  \int_t^T e^{2M_n^{2\beta} s} \mathbb{\bf E}  \|W_{M_n,n}(\cdot,s)-{u}(\cdot,s)\|^2_{H^\beta(\Omega)}ds  .
	\end{align*}	
	Let us	denote
	\begin{align*} \label{Pii}
	&{\bf \Pi} (n,{ u},g,\delta,\ep)\nn\\
	&=\left(\pi^2 V_{\max}^2+ \big|\overline C (\delta,u) \big|^2+ \frac{8}{b_0} \pi^2 T^3 \vartheta^2+\frac{8}{b_0}T \big|\overline D (\delta,g)\big|^2 \right) \frac{(M_n^{2\beta+1}+M_n^{2\beta})e^{2 M_n^{2\beta} T}}{n}\nn\\
	&	+\frac{4 e^{2 M_n^{2\beta} T}\ep^2 T^2}{b_0}  \|{ u}\|^2_{L^\infty [0,T];H^{2\beta}(\Omega))}+  \|{u}_T\|_{\tilde V(\Omega)}^2\nn\\
	&+ \frac{8T}{b_0} \|g\|^2_{L^\infty([0,T]; \widetilde V(\Omega))}+\frac{8T a_0^2}{b_0} \|{u}\|_{L^\infty\lf([0,T];\tilde V(\Omega)\rt)}^2
	\end{align*}
	then we have
	\begin{align*}
	e^{2 M_n^{2\beta} t}	\mathbb{\bf E}  \|W_{M_n,n}(\cdot,t)-{u}(\cdot,t)\|^2_{H^\beta(\Omega)}  &\le {\bf \Pi} (n,{ u},g,\delta,\ep)\nn\\
	&
	+\frac{8}{b_0} K^2(Q_n)  \int_t^T e^{2M_n^{2\beta} s} \mathbb{\bf E}  \|W_{M_n,n}(\cdot,s)-{u}(\cdot,s)\|^2_{H^\beta(\Omega)}ds  .
	\end{align*}	
	Using Gronwall's inequality, we obtain
	\begin{align*}
	e^{2 M_n^{2\beta} t}	\mathbb{\bf E}  \|W_{M_n,n}(\cdot,t)-{u}(\cdot,t)\|^2_{H^\beta(\Omega)} \le \exp\Big( \frac{8}{b_0} K^2(Q_n)(T-t) \Big)  {\bf \Pi} (n,{ u},g,\delta,\ep).
	\end{align*}
	This implies that
	\begin{align*}
	\mathbb{\bf E}  \|W_{M_n,n}(\cdot,t)-{u}(\cdot,t)\|^2_{H^\beta(\Omega)} \le 	e^{-2 M_n^{2\beta} t} \exp\Big( \frac{8}{b_0} K^2(Q_n)(T-t) \Big)  {\bf \Pi} (n,{ u},g,\delta,\ep).
	\end{align*}
\end{proof}

\noindent {\bf Acknowledgments}
This work  is supported by Vietnam National University-Ho Chi Minh City (VNU-HCM) under the Grant number B2017-18-03.

\end{document}